\theoremstyle{plain}
\def\endproof{\hspace*{\fill}\mbox{\ \rule{.1in}{.1in}}\medskip }
\numberwithin{equation}{section}
\newcommand{\heis}{\mathbb{H}}
\newcommand{\dd}{~\mbox{d}}
\newcommand{\osc}{\mbox{osc}}
\newcommand{\p}{\mathrm{\bf p}}
\newcommand{\q}{\mathrm{\bf q}}
\newcommand{\s}{\mathrm{\bf s}}
\newcommand{\R}{\mathbb{R}}
\newcommand{\D}{\mathcal{D}}
\theoremstyle{plain}
\newtheorem{Teo}{Theorem}[section]
\newtheorem{Lemma}[Teo]{Lemma}
\newtheorem{Cor}[Teo]{Corollary}
\theoremstyle{definition}
\newtheorem{Rem}[Teo]{Remark}
\newtheorem{Def}[Teo]{Definition}
\begin{document}
\title[Noisy Tug of War games for the $p$-Laplacian]{Noisy
  Tug of War games for the $\mathbf{p}$-Laplacian: ${\mathbf{1<p<{\boldsymbol \infty}}}$}
\author{Marta Lewicka}
\address{University of Pittsburgh, Department of Mathematics, 
139 University Place, Pittsburgh, PA 15260}
\email{lewicka@pitt.edu} 

\begin{abstract}
We propose a new finite difference approximation to the
Dirichlet problem for the homogeneous $\mathbf{p}$-Laplace equation posed on
an $N$-dimensional domain, in connection with the 
Tug of War games with noise. Our
game and the related mean-value expansion that we develop,
superposes the ``deterministic averages'' ``$\frac{1}{2}(\inf
+\sup)$'' taken over balls, with
the ``stochastic averages'' ``$\fint$'', taken over $N$-dimensional
ellipsoids whose aspect ratio depends on $N,\mathbf{p}$ 
and whose orientations span all directions while determining
$\inf / \sup$. We show that the unique
solutions $u_\epsilon$ of the related dynamic programming principle
are automatically continuous for continuous boundary 
data, and coincide with the well-defined game
values. Our game has thus the min-max property: the order of
supremizing the outcomes over strategies of one player and
infimizing over strategies of their opponent, is immaterial. We
further show that domains satisfying the exterior
corkscrew condition are game regular in this context, i.e. the family
$\{u_\epsilon\}_{\epsilon\to 0}$ converges uniformly to the unique
viscosity solution of the Dirichlet problem.  
\end{abstract}

\date{September 2, 2019}
\maketitle

\section{Introduction}

In this paper, we study the finite difference approximations to the
Dirichlet problem for the homogeneous {\em $\p$-Laplace equation} $\Delta_\p 
u=0$, posed on an $N$-dimensional domain, in connection to the dynamic programming principles
of the so-called {\em Tug of War games with noise}.

It is a well known fact that for $u\in\mathcal{C}^2(\R^N)$ there holds
the following {\em mean value expansion}:
$$\fint_{B(x,r)} u(y)\dd y = u(x) + \frac{r^2}{2(N+2)}\Delta u(x) +
o(r^2)\qquad \mbox{as }\; r\to 0+.$$
Indeed, an equivalent condition for harmonicity $\Delta u =0$ is
the mean value property, and thus $\Delta u(x)$
provides the second-order offset from the satisfaction of this
property. When we replace $B(x,r)$ by an ellipse $E(x,r;\alpha, \nu)=x+\{y\in \R^N;
~\langle y,\nu\rangle^2 + \alpha^2|y-\langle
y,\nu\rangle\nu|^2<\alpha^2r^2\}$ with the radius $r$, the aspect ratio $\alpha>0$ and
oriented along some given unit vector $\nu$, we obtain:
\begin{equation}\label{ellip}
\fint_{E(x,r;\alpha,\nu)} u(y)\dd y = u(x) + \frac{r^2}{2(N+2)}\Big(\Delta u(x) +
(\alpha^2-1)\langle \nabla^2u(x) : \nu^{\otimes 2}\rangle \Big) + o(r^2).
\end{equation}
Recalling the interpolation:
\begin{equation}\label{MV1} 
\Delta_\p u = |\nabla u|^{\p-2}\big(\Delta u + (\p-2) \Delta_\infty u\big), 
\end{equation} 
the formula (\ref{ellip}) becomes: $\fint_{E(x,r;\alpha,\nu)} u(y)\dd y = u(x)
+\frac{r^2 |\nabla u|^{2-\p}}{2(N+2)}\Delta_\p u(x) + o(r^2),$
for the choice  $\alpha = \sqrt{\p-1}$ and $\nu = \frac{\nabla u(x)}{|\nabla u(x)|}$. 
To obtain the mean value expansion where the left hand side averaging
does not require the knowledge of $\nabla u(x)$ and
allows for the identification of a $\p$-harmonic function that is a
priori only continuous,  we need to, in a sense, additionally average over all
equally probable vectors $\nu$. This can be carried out  by superposing: 
\begin{itemize}
\item[(i)]  the {\em deterministic average} ``$\frac{1}{2}(\inf +\sup)$'', with
\item[(ii)] the {\em stochastic average}
``$\fint$'', taken over appropriate ellipses $E$ whose aspect ratio depends on $N,\p$
and whose orientations $\nu$ span all directions while determining $\inf / \sup$ in (i).
\end{itemize}

In fact, such construction can be made precise (see Theorem
\ref{th_expansion}), leading to the expansion:
\begin{equation}\label{el2}
\begin{split}
\frac{1}{2}\Big(\inf_{z\in B(x,r)} + \sup_{z\in B(x,r)} \Big)
& \fint_{\begin{minipage}{4.4cm} $E\big(z,\gamma_\p r,
      \alpha_\p(\big|\frac{z-x}{r}\big|), \frac{z-x}{|z-x|}\big)$\end{minipage}} 
u(y)\dd y \\ & \qquad\qquad \qquad\qquad = u(x) + \frac{\gamma_\p^2 r^2|\nabla u|^{\p-2}}{2(N+2)}\Delta_\p u(x) + o(r^2),
\end{split}
\end{equation}
with $\gamma_\p $ that is a fixed stochastic
sampling radius factor, and with $\alpha_\p$ that is the aspect ratio
in radial function of the deterministically chosen position $z\in
B(x,r)$. The value of $\alpha_\p$ varies quadratically from $1$ at the center of 
$B(x,r)$ to $a_\p$ at its boundary, where $a_\p$ and $\gamma_\p$ satisfy the compatibility
condition $\frac{N+2}{\gamma_\p^2} + a_\p^2=\p-1$.

\medskip

We will be concerned with the mean value expansions of the form (\ref{el2}), in connection with
the specific Tug of War games with noise. This connection has been 
displayed in \cite{PS} by Peres and Scheffield, based on another interpolation property of $\Delta_\p$:
\begin{equation}\label{MV2}
\Delta_\p u = |\nabla u|^{\p-2}\Big(|\nabla u|\Delta_1 u + (\p-1)\Delta_\infty u\Big),
\end{equation}
which has first appeared (in the context of the applications of
$\Delta_\p$ to image recognition) in \cite{Kaw}.
Indeed, the construction in \cite{PS} interpolates from: {(i)} the $1$-Laplace operator $\Delta_1$ corresponding to
the motion by curvature game studied by Kohn and Serfaty \cite{KS},
to { (ii)} the $\infty$-Laplacian  $\Delta_\infty$ corresponding to the pure Tug of War studied by
Peres, Schramm, Scheffield and Wilson \cite{PSSW}. We remark that if one
uses (\ref{MV1}) instead of (\ref{MV2}), one is lead to the  games studied by Manfredi, Parviainen and
Rossi \cite{MPR}, that interpolate from: $\Delta_2$ (classically corresponding \cite{Dbook}
to Brownian motion), to $\Delta_\infty$; this approach however poses a
limitation on the exponents $\p\in [2,\infty)$.

The original game presented in \cite{PS} was a two-player, zero-sum game, stipulating that at each turn,
position of the token is shifted by some vector $\sigma$ within the prescribed radius
$r=\epsilon>0$, by a player who has  won the coin toss, which is followed
by a further update of the position by a random ``noise vector''. The noise  vector is uniformly distributed
on the codimension-$2$ sphere, centered at the current position,
contained within the hyperplane that is orthogonal to the last
player's move $\sigma$, and with radius proportional to
$|\sigma|$ with  factor $\gamma=\sqrt{\frac{N-1}{\p-1}}$.
We again interpret that $\gamma$ interpolates from: { (i)} $+\infty$ at the critical exponent
$\p=1$ that corresponds to choosing a direction line and subsequently
determining its orientation, to { (ii)} $0$ at the critical exponent $\p=\infty$ that corresponds to not adding the
random noise at all.

\medskip

In this paper we utilize the full $N$-dimensional sampling on
ellipses $E$, rather than on spheres. Together with another
modification taking into account the boundary data $F$, we achieve that the
solutions of the dynamic programming principle at each scale
$\epsilon >0$ are automatically continuous (in fact, they inherit
the regularity of $F$) and coincide with the well-de\-fi\-ned game
values. The game stops almost surely under the additional
compatibility condition, displayed in (\ref{prope2}), on the scaling factors $\gamma_\p$
and $a_\p$ this condition is viable for any $\p\in (1,\infty)$. 
Our game has then the min-max property: the order of
supremizing the outcomes over strategies of one player and
infimizing over strategies of his opponent, is immaterial. 

This point has been left unanswered in the case of the game in \cite{PS},
where the regularity (even measurability) of the possibly distinct game values was not clear.
We also point out that in \cite{AHP}, the authors presented
a variant of the \cite{PS} game where the deterministic / stochastic
sampling takes place, respectively on: $(N-1)$-dimensional 
spheres, and $(N-1)$-dimensional balls within the orthogonal hyperplanes.
They obtain the min-max property and continuity of solutions to the
mean value expansion in their setting, albeit at the expense of much more complicated
analysis, passing through measurable construction and comparison to
game values. In our case the uniqueness and continuity follow directly,
much like in the linear $\p=2$ case where the $N$-dimensional
averaging guarantees smoothness of harmonic functions.

\medskip

\subsection{The content and structure of the paper}
In section 2, Theorem \ref{th_expansion}, we prove the validity of our
main mean value expansion (\ref{el2}). In the following remarks we show how
other expansions (with a wider range of exponents, with sampling set degenerating at the boundary,
or pertaining to the \cite{PS} codimension-$2$ sampling) arise in the same analytical context. 

In Theorem \ref{thD} in section 3, we obtain the existence, uniqueness
and regularity of solutions $u_\epsilon$ to the dynamic programming
principle (\ref{DPP2}) at each sampling scale $\epsilon$, that can be
seen as a finite difference approximation to
the $\p$-Laplace Dirichlet problem with continuous boundary data $F$.
In particular, each $u_\epsilon$ is continuous up to the boundary,
where it assumes the values of $F$.
Then, in Theorem \ref{conv_nondegene} we show that in case $F$ is already
a restriction of a $\p$-harmonic function with non-vanishing gradient,
the corresponding family $\{u_\epsilon\}_{\epsilon\to 0}$ uniformly converges to $F$ at the
rate that is of first order in $\epsilon$. Our proof uses an analytical
argument and it is based on the observation that for $s$ sufficiently
large, the mapping $x\mapsto |x|^s$ yields the variation that pushes the
$\p$-harmonic function $F$ into the region of $\p$-subharmonicity. In
the linear case $\p=2$, the quadratic correction $s=2$ suffices,
otherwise we give a lower bound (\ref{s_bound}) for the admissible
exponents $s=s(\p, N, F)$.

In section 4, we develop the probability setting for the Tug of War
game modelled on (\ref{el2}) and (\ref{DPP2}). In Lemma \ref{facom} we
show that the game stops almost surely if the scaling factors $\gamma_\p,
a_\p$ are chosen appropriately.
Then in Theorem \ref{are_equal}, using a classical martingale
argument, we prove that our game has a value, coinciding with the unique, continuous
solution $u_\epsilon$.

In section 5 we address convergence of the family
$\{u_\epsilon\}_{\epsilon\to 0}$. In view of its equiboundedness, it suffices to prove
equicontinuity. We first observe, in Theorem \ref{transfer_p},
that this property is equivalent to the seemingly weaker property of equicontinuity at the
boundary. Again, our argument is analytical rather than probabilistic,
based on the translation and well-posedness of (\ref{DPP2}). We then define
the {\em game regularity} of the boundary points, which turns out to be a notion equivalent to the
aforementioned boundary equicontinuity. Definition
\ref{def_gamereg}, Lemma \ref{thm_gamenotreg_noconv} and Theorem
\ref{thm_gamereg_conv} mimic the parallel statements in \cite{PS}. We further prove in Theorem
\ref{unif-topharm} that any limit of a converging sequence in 
$\{u_\epsilon\}_{\epsilon\to 0}$ must be the viscosity solution to the
$\p$-harmonic equation with boundary data $F$. By uniqueness of such
solutions, we obtain the uniform convergence of the entire family $\{u_\epsilon\}_{\epsilon\to 0}$ in
case of the game regular boundary.

In section 6 we show that domains that satisfy the {\em exterior
corkscrew condition} are game regular. The proof in Theorem \ref{corkthengamereg}
is based on the concatenating strategies technique and the annulus
walk estimate taken from \cite{PS}. We expand the proofs and carefully
provide the details omitted in \cite{PS}, for the benefit of the
reader less familiar with the probability techniques.

\medskip

Finally, let us mention that similar results and approximations,
together with their game-theoretical interpretation, can be also developed in other
contexts, such as: the obstacle problems, nonlinear potential theory in
Heisenberg group (or in other subriemannian
geometries), Tug of War on graphs, the non-homogeneous problems, 
problems with non-constant coefficient $\p$, and the fully nonlinear case of $\Delta_\infty$. 

\medskip

\subsection{Notation for the $\p$-Laplacian} 
Let $\mathcal{D}\subset \mathbb{R}^N$ be an open, bounded, connected
set. Given $\p\in (1,\infty)$, consider the following Dirichlet integral:
$$\mathcal{I}_\p(u) = \int_{\D} |\nabla u(x)|^\p~\dd x \qquad
\mbox{for all }~ u\in W^{1,\p}(\D),$$
that we want to minimize among all functions $u$
subject to some given boundary data. The condition for the vanishing of the
first variation of $\mathcal{I}_\p$, 
assuming sufficient regularity of $u$ so that the divergence theorem
may be used,  takes the form: 
$$\int_{\D} \eta\, \mathrm{div}\big(|\nabla u|^{\p-2}\nabla u\big)\dd x= 0
\qquad \mbox{for all } \eta\in\mathcal{C}_c^\infty(\D),$$ 
which, by the fundamental theorem of Calculus of Variations, yields:
\begin{equation}\label{pla}
\Delta_\p u = \mathrm{div}\Big(|\nabla u|^{\p-2}\nabla u\Big) = 0 \quad\mbox{in } \D.
\end{equation}
The operator in (\ref{pla}) is
called the {\em $\p$-Laplacian}, the partial differential equation
(\ref{pla}) is called the {\em $\p$-harmonic equation} and its solution
$u$ is a {\em $\p$-harmonic function}. It is not hard to compute:
\begin{equation*}\label{added_calc}
\begin{split}
\Delta_\p u & = |\nabla u|^{\p-2}\Delta u + \left\langle \nabla\big(|\nabla
u|^{\p-2}\big), \nabla u\right\rangle = |\nabla u|^{\p-2}\Big(\Delta u + (\p-2)
\Big\langle \nabla^2u: \Big(\frac{\nabla u}{|\nabla u|}\Big)^{\otimes 2}\Big\rangle\Big),
\end{split}
\end{equation*}
which is precisely (\ref{MV1}).
The second term in parentheses is called the {\em $\infty$-Laplacian}:
\begin{equation*}\label{infla}
\Delta_\infty u= \Big\langle \nabla^2u : \Big(\frac{\nabla u}{|\nabla u|}\Big)^{\otimes 2}\Big\rangle.
\end{equation*}
The notation above refers to taking the scalar (Frobenius) product of
the $N\times N$ matrix $\nabla^2 u$ with the rank-1 matrix: 
$\big(\frac{\nabla u}{|\nabla u|}\big)^{\otimes 2} 
= \frac{\nabla u}{|\nabla u|}\otimes \frac{\nabla u}{|\nabla u|} =
\big(\frac{\nabla u}{|\nabla u|}\big)\big(\frac{\nabla u}{|\nabla
  u|}\big)^T$. Using the scalar product of vectors notation, this is
equivalent to writing: $\Delta_\infty u= \big\langle \nabla^2u
\frac{\nabla u}{|\nabla u|}, \frac{\nabla u}{|\nabla u|}\big\rangle$. 

\smallskip

\noindent Applying (\ref{MV1}) to the effect that  $\Delta_1u= |\nabla
u|^{-1}\big(\Delta u -\Delta_\infty u\big)$ and introducing it in
(\ref{MV1}) again, yields (\ref{MV2}). Likewise,  for every $1<\p<\q<\s<\infty$ there holds:
$$(\s-\q)|\nabla u|^{2-\p}
\Delta_\p u = (\s-\p)|\nabla u|^{2-\q} \Delta_\q u + (\p-\q)|\nabla u|^{2-\s} \Delta_\s u.$$

\medskip

\subsection{Acknowledgments}
The author is grateful to Yuval Peres for discussions about Tug of War games. 
Support by the NSF grant DMS-1613153 is acknowledged.

\section{A mean value expansion for $\Delta_{\p}$} 

For $\rho, \alpha>0$ and a unit vector $\nu\in\R^N$, we denote by
$E(0,\rho;\alpha,\nu)$ the ellipse centered at $0$, with
radius $\rho$, and with aspect ratio $\alpha$ oriented along $\nu$, namely:
$$E(0,\rho;\alpha,\nu)=\big\{y\in\R^N;~ \frac{\langle
  y,\nu\rangle^2}{\alpha^2}+|y-\langle y, \nu\rangle\nu|^2<\rho^2\big\}.$$
For $x\in\R^N$, we have the translated ellipse:
$$E(x,\rho;\alpha,\nu) = x+ E(0,\rho;\alpha,\nu).$$
Note that, when $\nu=0$, this formula also makes sense and returns the
ball $E(x,\rho;\alpha,0) = B(x,\rho)$ centered at $x$ and with radius $\rho$.

Given a continuous function $u:\mathbb{R}^N\to\mathbb{R}$,  define the
averaging operator:
\begin{equation*}
\begin{split} 
\mathcal{A}\big(u; \rho,\alpha,\nu\big)(x) & =
\fint_{E(x,\rho;\alpha,\nu)}u(y)\dd y = \fint_{B(0,1)}
u\big(x+\rho y+\rho (\alpha-1)\langle y,\nu\rangle\nu\big)\dd y.
\end{split}
\end{equation*}
In what follows, we will often use the above linear change of variables:
$$B(0,1)\ni y\mapsto \rho\alpha\langle y,\nu\rangle\nu + \rho\big(y-\langle
y, \nu\rangle \nu\big)\in E(0,\rho;\alpha, \nu).$$

\begin{Teo}\label{th_expansion}
Given $\p\in (1,\infty)$, fix
any pair of scaling factors $\gamma_\p, a_\p>0$ such that:
\begin{equation}\label{prope}
\frac{N+2}{\gamma_\p^2} +a_\p^2 = \p-1.
\end{equation}
Let $u\in\mathcal{C}^2(\mathbb{R}^N)$. Then, for every $x_0\in\R^N$ such that $\nabla u(x_0)\neq 0$, we have:
\begin{equation}\label{expansion}
\begin{split}
\frac{1}{2}\Big(\inf_{x\in B(x_0, r)} +& \sup_{x\in B(x_0, r)} \Big) \mathcal{A}\Big(u; \gamma_\p r,
1+(a_\p-1)\frac{|x-x_0|^2}{r^2},\frac{x-x_0}{|x-x_0|}\Big)(x)  
\\ & \quad = u(x_0) +  \frac{\gamma_\p^2r^2}{2(N+2)} |\nabla u(x_0)|^{2-\p} 
\Delta_\p u(x_0) +o(r^2)\qquad\mbox{ as }\;r\to 0+.
\end{split}
\end{equation}
The coefficient in the rate of convergence $o(r^2)$ depends only on
$\p$, $N$, $\gamma_\p$ and (in increasing manner) on
$|\nabla u(x_0)|$, $|\nabla^2u(x_0)|$ and the modulus of continuity of $\nabla^2 u$ at $x_0$.
\end{Teo}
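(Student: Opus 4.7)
My plan is to reduce the statement to the basic ellipse mean value expansion (\ref{ellip}) applied pointwise at $x = x_0 + rh$, and then analyse the $\frac{1}{2}(\inf+\sup)$ averaging via a Taylor expansion in the normalized parameter $h \in \bar B(0,1)$.

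First, I would apply (\ref{ellip}) at $x = x_0 + rh$ with radius $\gamma_\p r$, aspect ratio $\alpha(h) = 1 + (a_\p - 1)|h|^2$, and axis $\nu(h) = h/|h|$. The $o(r^2)$ remainder is uniform in $h \in \bar B(0,1)$, because the coefficient in (\ref{ellip}) is controlled by the modulus of continuity of $\nabla^2 u$ on a fixed compact neighbourhood of $x_0$, together with the boundedness of $\alpha(h) \in [\min(1,a_\p), \max(1,a_\p)]$. Then, Taylor expanding $u$, $\Delta u$, and $\nabla^2 u$ around $x_0$, and using the identity $(\alpha(h)^2 - 1)\langle \nabla^2 u(x_0) : \nu(h)^{\otimes 2}\rangle = \bigl(2(a_\p - 1) + (a_\p - 1)^2 |h|^2\bigr)\langle \nabla^2 u(x_0) : h^{\otimes 2}\rangle$ (which removes the apparent singularity at $h=0$), I would obtain the uniform expansion
\begin{equation*}
\mathcal{A}\big(u; \gamma_\p r, \alpha(h), \nu(h)\big)(x_0 + rh) = u(x_0) + r\, G(h) + r^2\, H(h) + o(r^2),
\end{equation*}
where $G(h) = \langle \nabla u(x_0), h\rangle$ and $H$ is a polynomial in $h$, even in $h$.

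Second, since $\nabla u(x_0) \neq 0$, the linear term $G$ has strict extrema on $\bar B(0,1)$ at $h_\pm = \pm \nabla u(x_0)/|\nabla u(x_0)|$. Comparing the expansion at $h_\pm$ with any candidate extremizer $h_\pm^*$ and using the Lipschitz continuity of $H$, I would deduce $|h_\pm^* - h_\pm| = O(r^{1/2})$, hence
\begin{equation*}
\sup_h \mathcal{A} = u(x_0) + r|\nabla u(x_0)| + r^2 H(h_+) + o(r^2), \qquad \inf_h \mathcal{A} = u(x_0) - r|\nabla u(x_0)| + r^2 H(h_-) + o(r^2).
\end{equation*}
The evenness $H(h_-) = H(h_+)$ cancels the $O(r)$ contributions upon averaging, leaving $\frac{1}{2}(\inf+\sup)\mathcal{A} = u(x_0) + r^2 H(h_+) + o(r^2)$.

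Third, at $h = h_+$ we have $|h_+| = 1$ (so $\alpha = a_\p$) and $\langle \nabla^2 u(x_0): h_+^{\otimes 2}\rangle = \Delta_\infty u(x_0)$, whence
\begin{equation*}
H(h_+) = \frac{1}{2}\Delta_\infty u(x_0) + \frac{\gamma_\p^2}{2(N+2)}\bigl(\Delta u(x_0) + (a_\p^2-1)\Delta_\infty u(x_0)\bigr).
\end{equation*}
The compatibility condition (\ref{prope}) gives $N + 2 + \gamma_\p^2(a_\p^2 - 1) = \gamma_\p^2(\p - 2)$, which simplifies this to $H(h_+) = \frac{\gamma_\p^2}{2(N+2)}\bigl(\Delta u(x_0) + (\p - 2)\Delta_\infty u(x_0)\bigr) = \frac{\gamma_\p^2}{2(N+2)} |\nabla u(x_0)|^{2 - \p}\Delta_\p u(x_0)$, the last equality by (\ref{MV1}). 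The main technical point will be Step 1: ensuring the $o(r^2)$ remainder is uniform across the degenerate direction $h = 0$ (where $\nu(h)$ is undefined, but $\alpha(h) \to 1$ makes the ellipse a ball, so the integrand extends continuously), and propagating this uniformity through the inf/sup localization in Step 2.
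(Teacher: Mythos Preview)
Your proposal is correct and follows essentially the same route as the paper: Taylor-expand the ellipse average at $x=x_0+rh$ (the paper does this directly from the integral form of $f_u$ rather than quoting (\ref{ellip}), but the computation is identical), reduce to $u(x_0)+rG(h)+r^2H(h)+o(r^2)$ with $G$ linear and $H$ even, localize the extremizers near $h_\pm=\pm\nabla u(x_0)/|\nabla u(x_0)|$, and evaluate using (\ref{prope}) and (\ref{MV1}). The paper's Step~2 obtains the slightly sharper bound $|h^*_\pm-h_\pm|=O(r)$ via the explicit estimate $|a-x_{\max}|\le 4r|A|$ for $g_r(x)=\langle a,x\rangle+r\langle A:x^{\otimes 2}\rangle$, whereas your $O(r^{1/2})$ (which is what boundedness of $H$ gives; Lipschitz actually yields $O(r)$) already suffices since one only needs $r^2|H(h^*_\pm)-H(h_\pm)|=o(r^2)$.
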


\bigskip

The expression in the left hand side of the formula (\ref{expansion})
should be understood as the deterministic average $\frac{1}{2}
(\inf +\sup)$, on the ball $B(x_0, r)$, of the function $x\mapsto f_u(x;x_0,r)$ in:
\begin{equation}\label{f}
\begin{split}
f_u(x; x_0, r) & = \mathcal{A}\Big(u; \gamma r,
1+(a-1)\frac{|x-x_0|^2}{r^2},\frac{x-x_0}{|x-x_0|}\Big)(x)\\ & =\fint_{B(0,1)}
u\big(x+\gamma r y + \frac{\gamma (a-1)}{r}\langle y, x-x_0\rangle (x-x_0)\big)\dd y,
\end{split}
\end{equation}
where $\gamma=\gamma_\p$ and $a=a_\p$. We will frequently use the notation:
\begin{equation}\label{aveS}
S_r u(x_0) = \frac{1}{2}\Big(\inf_{x\in B(x_0, r)} +\sup_{x\in B(x_0, r)} \Big)  f_u(x; x_0, r).
\end{equation}
For each $x\in B(x_0, r)$ the integral quantity in (\ref{f}) returns the average of
$u$ on the $N$-dimensional ellipse centered at $x$, with radius $\gamma r$, 
and with aspect ratio $\frac{r^2+(a-1)|x-x_0|^2}{r^2}$ along the
orientation vector $\frac{x-x_0}{|x-x_0|}$. Equivalently, writing
$x=x_0+ry$, the value $f_u(x; x_0, r)$ is the average of $u$ on the
scaled ellipse $x_0+ rE\big(y,\gamma; 1+(a-1)|y|^2,\frac{y}{|y|}\big)$.
Since the aspect ratio
changes smoothly from $1$ to $a$ as $|x-x_0|$ decreases from $0$ to
$r$, the said ellipse coincides with the ball $B(x_0,
\gamma r)$ at $x=x_0$ and it interpolates as $|x-x_0|\to r-$, to
$E\big(x,\gamma r; a, \frac{x-x_0}{|x-x_0|}\big).$ 

\begin{figure}[htbp]
\centering
\includegraphics[scale=0.3]{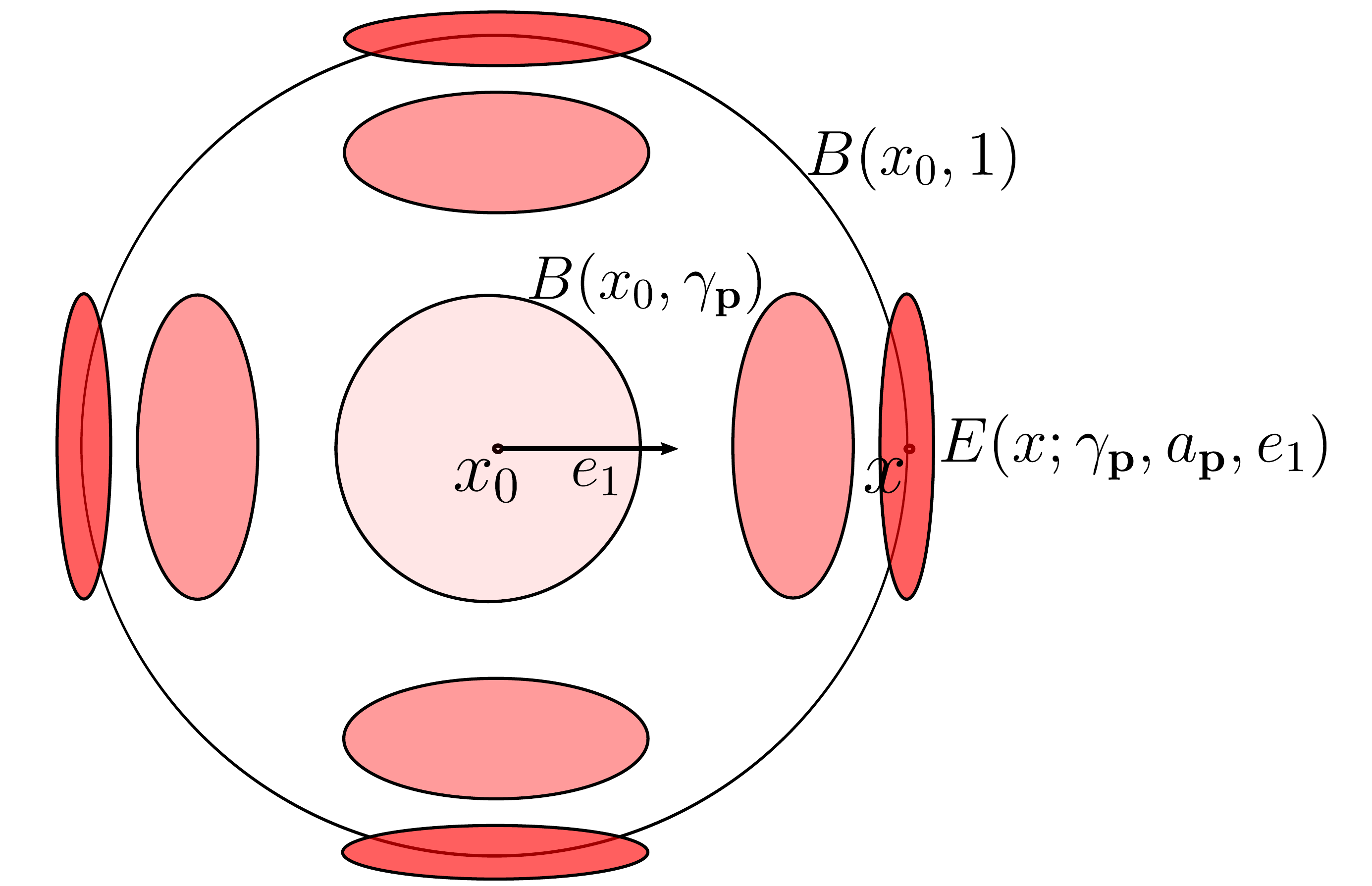}
    \caption{{The sampling ellipses in the expansion
        (\ref{expansion}), when $r=1$.}}
\label{f:exp_proof}
\end{figure}

\bigskip

\newpage

\noindent {\bf Proof of Theorem \ref{th_expansion}.}

\smallskip

\noindent {\bf 1.} We fix $\gamma, a> 0$ and consider the Taylor expansion of $u$ 
at a given $x\in B(x_0, \rho)$ under the second
integral in (\ref{f}). Observe that the first order increments
are linear in $y$, hence they integrate to $0$ on $B(0,1)$. These
increments are of order $r$ and we get:
\begin{equation}\label{f_approx}
\begin{split}
&f_u(x; ~x_0, r)  \\ & = u(x) + \frac{1}{2}\Big\langle \nabla^2 u(x): \fint_{B(0,1)} \Big(\gamma
r y + \frac{\gamma(a-1)}{r}\langle y, x-x_0\rangle (x-x_0)\Big)^{\otimes 2}\dd
y\Big\rangle + o(r^2)\\ & = u(x) + \frac{\gamma^2}{2}\Big\langle
\nabla^2 u(x): r^2 \fint_{B(0,1)} y^{\otimes 2} \dd y +2 (a-1)\fint_{B(0,1)} \langle y,
x-x_0\rangle y\dd y \otimes (x-x_0) \\ & \qquad \qquad\qquad\qquad
\qquad + \frac{(a-1)^2}{r^2}\Big(\fint_{B(0,1)} \langle y,
x-x_0\rangle^2\dd y\Big) (x-x_0)^{\otimes 2}\Big\rangle + o(r^2).
\end{split}
\end{equation}
Recall now that:
$$\fint_{B(0,1)}y^{\otimes 2}\dd y = \Big( \fint_{B(0,1)} y_1^2\dd y \Big) Id_N = \frac{1}{N+2} Id_N.$$
Consequently, (\ref{f_approx}) becomes:
\begin{equation*}
\begin{split} 
f_u(x; x_0, r) & = u(x)  + \frac{\gamma^2 r^2}{2(N+2)}\Delta
u(x) \\ & \quad + \frac{\gamma^2(a-1)}{2}\Big(\frac{2}{N+2}+\frac{(a-1)|x-x_0|^2}{r^2(N+2)}
\Big)\big\langle \nabla^2u(x) :  
(x-x_0)^{\otimes 2}\big\rangle + o(r^2)\\ & = \bar f_u(x; x_0, r) + o(r^2),
\end{split}
\end{equation*}
where a further Taylor expansion of $u$ at $x_0$ gives:
\begin{equation*}
\begin{split}
\bar f_u(x; x_0, r)  = u(x_0) & + \langle \nabla u(x_0), x-x_0\rangle
+ \frac{\gamma^2 r^2}{2(N+2)}\Delta
u(x_0) \\ & + \Big(\frac{1}{2} +
\frac{\gamma^2(a-1)}{2}\Big(\frac{2}{N+2} + \frac{(a-1)|x-x_0|^2}{r^2(N+2)}\Big)\Big) \big\langle \nabla^2u(x_0) : 
(x-x_0)^{\otimes 2}\big\rangle.
\end{split}
\end{equation*}

The left hand side of (\ref{expansion}) thus satisfies:
\begin{equation}\label{f_approx2}
\begin{split}
\frac{1}{2}\Big(\inf_{x\in B(x_0,r)}& f_u(x; x_0, r)  + \sup_{x\in
  B(x_0,r)}f_u(x; x_0, r) \Big) \\ & = \frac{1}{2}\Big(\inf_{x\in B(x_0,r)}
\bar f_u(x; x_0, r) + \sup_{x\in B(x_0,r)} \bar f_u(x; x_0, r) \Big) + o(r^2),
\end{split}
\end{equation}
Since on $B(x_0,r)$ we have: $\bar f_u(x; x_0,r) = u(x_0) + \langle \nabla
u(x_0), x-x_0\rangle + O(r^2)$, the assumption $\nabla u(x_0)\neq
0$ implies that the continuous function $\bar f_u(\cdot \;; x_0,r) $
attains its extrema on the boundary $\partial B(x_0,r)$, provided that
$r$ is sufficiently small. This reasoning justifies that $\bar f_u$ in (\ref{f_approx2}) may be
replaced by the quadratic polynomial function:
\begin{equation*}
\begin{split}
\bar{\bar f}_u(x; x_0, r)  = u(x_0) & + \frac{\gamma^2 r^2}{2(N+2)}\Delta
u(x_0) + \langle \nabla u(x_0), x-x_0\rangle
\\ & + \Big(\frac{1}{2} + \frac{\gamma^2(a^2-1)}{2(N+2)}\Big)\big\langle \nabla^2u(x_0) : 
(x-x_0)^{\otimes 2}\big\rangle.
\end{split}
\end{equation*}

\medskip

{\bf 2.} We now argue that $\bar{\bar f}_u$ attains its extrema on
$\bar B(x_0,r)$, up to error $O(r^3)$ whenever $r$ is
sufficiently small, precisely at the opposite
boundary points $x_0+ r\frac{\nabla u(x_0)}{|\nabla u(x_0)|}$ and
$x_0-r\frac{\nabla u(x_0)}{|\nabla u(x_0)|}$. We recall the adequate
argument from \cite{PS}, for the convenience of the 
reader. After translating and rescaling, it suffices to investigate 
the extrema on $\bar B(0,1)$, of the functions:
$$g_r(x) = \langle a, x\rangle + r\langle A: x^{\otimes 2}\rangle,$$
where $a\in \R^N$ is of unit length and $A\in \R^{N\times N}_{\mathrm{sym}}$.
Fix a small $r>0$ and let $x_{max}\in \partial B(0,1)$ be a maximizer
of $g_r$. Writing $g_r(x_{max})\geq g_r(a)$, we obtain:
\begin{equation*}
\begin{split}
\langle a, x_{max}\rangle & \geq 1 + r \big\langle A
: a^{\otimes 2} - x_{max}^{\otimes 2} \big\rangle \\ & \geq 1 - r |A|
\big| a^{\otimes 2} - x_{max}^{\otimes 2} \big| \geq 1 - 2r |A| |a - x_{max}|.
\end{split}
\end{equation*}
Thus there holds: $ |a - x_{max} |^2 = 2 - 2\langle a, x_{max}\rangle
\leq 4 r |A| |a - x_{max}|$, and so finally:
$$ |a- x_{max} | \leq 4r |A|.$$
Since  $\langle a, x_{max}\rangle \leq 1$, we conclude that:
\begin{equation*}
\begin{split}
0\leq g_r(x_{max}) - g_r(a) & = \langle a, x_{max}- a\rangle +
r \big\langle A : x_{max}^{\otimes 2} - a^{\otimes 2}  \big\rangle \\ & \leq
r \big\langle A : x_{max}^{\otimes 2} - a^{\otimes 2}  \big\rangle \leq
2 r |A| |a - x_{max}| \leq 8r^2 |A|^2.
\end{split}
\end{equation*}

Likewise, for a minimizer $x_{min}$ we have:
$ 0\geq g_r(x_{min}) - g_r(-a)  \geq  -8r^2 |A|^2$. It follows that:
\begin{equation*}
\begin{split}
\big| \frac{1}{2}\big(g_r(x_{min}) + &g_r (x_{max})\big) - \frac{1}{2}
\big(g_r(-a) + g_r (a)\big) \big| \leq  8 r^2 |A|^2,
\end{split}
\end{equation*}
which proves the claim for the unscaled functions $\bar{\bar f}_u$.

\medskip 

{\bf 3.} We now observe that for $\gamma=\gamma_\p$, $a=a_\p$
satisfying (\ref{prope}), there holds:
\begin{equation}\label{final}
\begin{split}
\frac{1}{2}\Big(&\inf_{x\in B(x_0,r)} \bar{\bar f}_u(x; x_0, r)  + \sup_{x\in
  B(x_0,r)}\bar{\bar f}_u(x; x_0, r) \Big) \\ & = u(x_0)  + \frac{\gamma^2 r^2}{2(N+2)}\Delta
u(x_0) + r^2 \Big(\frac{1}{2} + \frac{\gamma^2(a^2-1)}{2(N+2)}
\Big)\Delta_\infty u(x_0) + O(r^3) \\ &  = u(x_0)  + \frac{\gamma^2
  r^2}{2(N+2)}\Big( \Delta u(x_0) +  \Big(\frac{N+2}{\gamma^2} +
a^2 -1\Big)\Delta_\infty u(x_0) \Big) + O(r^3)\\ & =  u(x_0)  + \frac{
  \gamma_\p^2 r^2}{2(N+2)}\Big( \Delta u(x_0) +  (\p -2)\Delta_\infty u(x_0)
\Big) + O(r^3) \\ & =  u(x_0)  + \gamma_\p^2r^2 \frac{|\nabla u(x_0)|^{2-\p}
}{2(N+2)} \Delta_\p u(x_0) + O(r^3),
\end{split}
\end{equation}
where in the last step we used (\ref{MV1}). This completes the proof in view of (\ref{f_approx2}).
\endproof

\begin{Rem}\label{ag-general}
A few heuristic observations are in order.
When $\p\to\infty$, one can take $a_\p=1$ and
$\gamma_\p\sim 0$  in (\ref{prope}), whereas (\ref{expansion}) is
linked  to the following well-known
expansion and to the absolutely minimizing Lipschitz extension
property of the infinitely harmonic functions:
$$\frac{1}{2}\Big(\inf_{B(x_0, r)} u + \sup_{B(x_0, r)}u \Big) = u(x_0)  + \frac{r^2}{2}
\Delta_\infty u(x_0) +o(r^2).  $$
When $\p=2$, then choosing $a_\p=1$ and $\gamma_\p\sim \infty$
corresponds to taking both stochastic and deterministic averages on
balls, whose radii have ratio $\sim\infty$. Equivalently, one may average
stochastically on $B(x_0,r)$ and deterministically on $B(x_0,0)\sim
\{x_0\}$, consistently with another familiar expansion:
$$\mathcal{A}\big(u;r,1,0\big) (x_0) = \fint_{B(x_0,r)} u(y)\dd y = u(x_0) +
\frac{r^2}{2(N+2)}\Delta_2u(x_0) + o(r^2).$$
On the other hand, when $\p\to 1+$, then there must be $a_\p\to 0+$
and the critical choice $a_\p=0$  is the only one valid for
every $\p\in (1, \infty)$. It corresponds to varying the aspect ratio along the radius of $B(x_0, r)$
from $1$ to $0$ rather than to $a_\p>0$, and taking
the stochastic averaging domains to be the corresponding ellipses:
$$E\big(x,\gamma r; 1-\frac{|x-x_0|^2}{r^2}, \frac{x-x_0}{|x-x_0|}\big),$$ 
whose radius $\gamma r$ is scaled by the factor $\gamma=\sqrt{\frac{N+2}{\p-1}}$.
At $x=x_0$, the ellipse above coincides with the ball $B(x_0,
\gamma r)$, whereas as $|x-x_0|\to r-$ it degenerates to the
$(N-1)$-dimensional ball:
$$E\big(x,\gamma r; 0, \frac{x-x_0}{|x-x_0|}\big)= x+ \Big\{ y\in\R^N;~
\langle y, x-x_0\rangle=0 \mbox{ and } |y|< r\sqrt{\frac{N+2}{\p-1}}\;\Big\}.$$
The resulting mean value expansion is then:
\begin{equation}\label{exp_dege}
\begin{split}
\frac{1}{2}\Big(\inf_{x\in B(x_0, r)} +& \sup_{x\in B(x_0, r)}
\Big) \mathcal{A}\big(u; \gamma r, 1-\frac{|x-x_0|^2}{r^2},\frac{x-x_0}{|x-x_0|}\big)(x) 
\\ & \quad = u(x_0) +  \frac{r^2}{2(\p-1)} |\nabla u(x_0)|^{2-\p}  \Delta_\p u(x_0) +o(r^2).
\end{split}
\end{equation}
\end{Rem}

\begin{Rem}\label{PS}
In \cite{PS}, instead of averaging on an $N$-dimensional ellipse, the
average is taken on the $(N-2)$-dimensional sphere 
centered at $x$, with some radius $\gamma |x-x_0|$, and 
contained within the hyperplane perpendicular to $x-x_0$. The radius of the sphere thus
increases linearly from $0$ to $\gamma r$ with a factor $\gamma>0$, as $|x-x_0|$ varies from
$0$ to $r$. This corresponds to evaluating on $B(x_0,r)$ the
deterministic averages of:
$$f_u(x;x_0, r) = \fint_{\partial B^{N-1}(0,1)}u\big(x+\gamma |x-x_0| R(x)y \big)\dd y.$$
Here, $R(x)\in SO(N)$ is such that $R(x)e_N = \frac{x-x_0}{|x-x_0|}$,
and $\partial B^{N-1}(0,1)$ stands for the $(N-2)$-dimensional
sphere of unit radius, viewed as a subset of $\R^N$ contained in the subspace
$\R^{N-1}$ orthogonal to $e_N$. Note that $x\mapsto R(x)$ can be only
locally defined as a $\mathcal{C}^2$ function. However, the argument
as in the proof of Theorem \ref{th_expansion}, can still be applied to get:
\begin{equation*}
\begin{split}
f_u(x;x_0,r) & = u(x) +\frac{1}{2}\Big\langle \nabla^2 u(x)
:\gamma^2|x-x_0|^2 \fint_{\partial B^{N-1}(0,1)} \big(R(x)
y\big)^{\otimes 2}\dd y\Big\rangle +o(r^2) \\ & = u(x)
+\frac{\gamma^2}{2(N-1)}\Big( |x-x_0|^2 \Delta u(x) - \big\langle \nabla^2 u(x)
: (x-x_0)^{\otimes 2}\big\rangle \Big)+o(r^2) \\ & = u(x_0) +\langle \nabla
u(x_0), x-x_0\rangle +\frac{\gamma^2}{2(N-1)} |x-x_0|^2 \Delta u(x_0)
\\ & \qquad \quad \;\; \, + \Big(\frac{1}{2} - \frac{\gamma^2}{2(N-1)} \Big)\big\langle \nabla^2 u(x_0) :
(x-x_0)^{\otimes 2}\big\rangle +o(r^2) 
\end{split}
\end{equation*}
where we used the general formula $\fint_{\partial B^{d}(0,1)}y^{\otimes
  2}\dd y= \frac{1}{d}\fint_{\partial B^{d}(0,1)}|y|^2\dd y Id_d = \frac{1}{d}Id_d$, so that:
$$ \fint_{\partial B^{N-1}(0,1)} \big(R(x) y\big)^{\otimes 2}\dd y = \frac{1}{N-1} R(x)\big(Id_N - e_N^{\otimes 2}\big) R(x)^T =
\frac{1}{N-1} \Big(Id_N - \big(\frac{x-x_0}{|x-x_0|}\big)^{\otimes 2}\Big).$$
Calling $\bar{\bar f}_u$ the polynomial:
\begin{equation*}
\begin{split}
\bar{\bar f}_u(x; x_0,r) =  & ~u(x_0) + \langle \nabla u(x_0), x-x_0\rangle 
\\ & + \Big\langle \frac{\gamma^2}{2(N-1)} \Delta u(x_0) Id_N +
\Big(\frac{1}{2} - \frac{\gamma^2}{2(N-1)} \Big)\nabla^2 u(x_0) : (x-x_0)^{\otimes 2}\Big\rangle, 
\end{split}
\end{equation*}
the claim in Step 2 of proof of Theorem \ref{th_expansion} yields:
\begin{equation*}
\begin{split}
\frac{1}{2}\Big(&\inf_{x\in B(x_0,r)} \bar{\bar f}_u(x; x_0, r)  + \sup_{x\in
  B(x_0,r)}\bar{\bar f}_u(x; x_0, r) \Big) \\ &  = u(x_0)  + \frac{\gamma^2
  r^2}{2(N-1)}\Big( \Delta u(x_0) +  \Big(\frac{N-1}{\gamma^2} - 1
\Big)\Delta_\infty u(x_0) \Big) + O(r^3).
\end{split}
\end{equation*}
Clearly, there holds $ \frac{N-1}{\gamma^2} - 1=\p-2$, precisely for the
scaling factor $\gamma = \sqrt{\frac{N-1}{\p-1}}$  as in \cite{PS}.
In this case, we get the mean value expansion with the same coefficient as in (\ref{exp_dege}):
\begin{equation}\label{PS_dpp}
\begin{split}
\frac{1}{2}\Big(\inf_{x\in B(x_0, r)} +& \sup_{x\in B(x_0, r)} \Big) f_u(x;x_0,r)
\\ & \quad = u(x_0) +  \frac{r^2}{2(\p-1)} |\nabla u(x_0)|^{2-\p}  \Delta_\p u(x_0) +o(r^2).
\end{split}
\end{equation}
\end{Rem}

\section{The dynamic programming principle and the
  first convergence theorem}\label{sec_dpp_analysis}

Let $\mathcal{D}\subset\R^N$ be an open, bounded,
connected domain and let $F\in\mathcal{C}(\R^N)$ be a bounded data
function. Given $\gamma_\p, a_\p>0$ as in (\ref{prope}), recall the
definition of $S_r$ in (\ref{aveS}). We then have:

\begin{Teo}\label{thD}
For every $\epsilon\in (0,1)$ there exists a unique Borel, bounded function
$u:\R^N\to\mathbb{R}$ (denoted further by $u_\epsilon$), automatically continuous, such that:
\begin{equation}\label{DPP2} 
u(x) = d_\epsilon(x) S_\epsilon u(x) + \big(1-d_\epsilon(x)\big)F(x)
\qquad \mbox{ for all }\;x\in\R^N,
\end{equation}
where the scaled distance function $d_\epsilon:\R^N\to [0,1]$ is given by:
$$d_\epsilon (x) = \frac{1}{\epsilon}\min\big\{\epsilon,
\mathrm{dist}\big(x, \R^N\setminus\D\big)\big\}.$$
The solution operator to (\ref{DPP2}) is monotone, i.e. if $F\leq \bar
F$ then the corresponding solutions satisfy: $u_\epsilon\leq \bar
u_\epsilon$. Moreover $\|u\|_{\mathcal{C}(\R^N)}\leq \|F\|_{\mathcal{C}(\R^N)}$.
\end{Teo}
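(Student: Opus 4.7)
The plan is to realize the dynamic programming principle (\ref{DPP2}) as the fixed-point equation $u=Tu$ for the operator $Tu:=d_\epsilon S_\epsilon u+(1-d_\epsilon)F$, and to harvest existence, uniqueness, continuity, monotonicity and boundedness from four properties of $T$: order-preservation, invariance of the ball $\{\|u\|_\infty\leq\|F\|_\infty\}$, $L^\infty$-non-expansiveness, and the smoothing property that $Tu$ is continuous for every bounded Borel $u$. The smoothing already yields the ``automatic continuity'' of any solution; it rests on the fact that the ellipse average $\mathcal{A}(u;\rho,\alpha,\nu)(x)$ is continuous jointly in $(x,\alpha,\nu)$, because the ellipse volume is bounded below by $c(N)\rho^N\min(a_\p,1)$ (using $a_\p>0$) and symmetric differences of nearby ellipses have small Lebesgue measure. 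Hence $x\mapsto f_u(x;x_0,\epsilon)$ from (\ref{f}) is continuous on $\bar B(x_0,\epsilon)$---with the convention $E(x_0,\gamma_\p\epsilon;1,0)=B(x_0,\gamma_\p\epsilon)$ making the integrand continuous at $x=x_0$---and taking $\frac{1}{2}(\inf+\sup)$ on the compact ball $\bar B(x_0,\epsilon)$ and multiplying by the Lipschitz factor $d_\epsilon$ preserves continuity.

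For existence I would use monotone iteration. Because $S_\epsilon c=c$ for every constant $c$, the constants $\underline u_0\equiv-\|F\|_\infty$ and $\bar u_0\equiv\|F\|_\infty$ satisfy $T\underline u_0\geq\underline u_0$ and $T\bar u_0\leq\bar u_0$. Monotonicity of $T$ (inherited from monotonicity of $\fint$, $\inf$, $\sup$ and non-negativity of $d_\epsilon$) then makes $\underline u_n:=T^n\underline u_0$ increasing and $\bar u_n:=T^n\bar u_0$ decreasing in $[-\|F\|_\infty,\|F\|_\infty]$, so both converge pointwise to limits $\underline u\leq\bar u$. Passing $\underline u_{n+1}=T\underline u_n$ to the limit combines monotone convergence for the ellipse integrals with Dini's theorem applied to the monotone continuous family $y\mapsto\mathcal{A}(\underline u_n;\cdot)(y)$ on the compact ball $\bar B(x_0,\epsilon)$; the resulting uniform convergence lets $\inf$ and $\sup$ commute with the pointwise limit, so $T\underline u=\underline u$ and symmetrically $T\bar u=\bar u$.

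The main obstacle is uniqueness, i.e.\ $\underline u=\bar u$ (which by monotonicity traps every bounded Borel solution between these two, reducing uniqueness to this single identity). I would proceed by a strong-maximum-principle argument on $w:=\bar u-\underline u\geq 0$: $w$ is continuous, vanishes on $\R^N\setminus\D$, and satisfies $w=d_\epsilon(S_\epsilon\bar u-S_\epsilon\underline u)$. If $M:=\sup w>0$, its maximum is attained at some $x^*\in\D$, and the pointwise bound $S_\epsilon\bar u-S_\epsilon\underline u\leq\sup_{B(x^*,(1+\gamma_\p)\epsilon)}w$ combined with $w(x^*)=M$ forces $d_\epsilon(x^*)=1$. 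Tracking the equality cases in the $\frac{1}{2}(\inf+\sup)$ decomposition of $S_\epsilon\bar u-S_\epsilon\underline u$ at $x^*$, one infers $\mathcal{A}_{y^\pm}w=M$ at the extremal points $y^\pm\in\bar B(x^*,\epsilon)$, hence $w\equiv M$ on the full-dimensional sampling ellipses $E^\pm$ centered at $y^\pm$; these ellipses cannot escape $\D$ (else $w=0=M$ there) and, being contained in $\{w=M\}$, must lie in the interior set $\{d_\epsilon=1\}$. Iterating the same analysis at every point of $E^\pm$ makes $\{w=M\}$ propagation-invariant in $\{d_\epsilon=1\}$, and the technical heart of the argument---the step I expect to be the main challenge---is to combine boundedness of $\D$ with the uniform lower bound on the ellipse volumes to show that the growing family of maximum-attaining ellipses must eventually intersect the boundary layer $\{d_\epsilon<1\}$, yielding the desired contradiction.

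The two remaining conclusions are then immediate: if $F\leq\bar F$ then $T_F\leq T_{\bar F}$ pointwise, the monotone iterations starting from the same constant produce ordered iterates, and uniqueness forces $u_\epsilon\leq\bar u_\epsilon$; the bound $\|u_\epsilon\|_{\mathcal{C}(\R^N)}\leq\|F\|_{\mathcal{C}(\R^N)}$ was built into the invariant interval $[-\|F\|_\infty,\|F\|_\infty]$ throughout the construction.
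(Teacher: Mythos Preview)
Your overall scheme coincides with the paper's: realize (\ref{DPP2}) as the fixed-point equation for $T_\epsilon v=d_\epsilon S_\epsilon v+(1-d_\epsilon)F$, observe that $T_\epsilon$ maps bounded Borel functions to continuous ones (automatic continuity), obtain existence by monotone iteration from a constant, and prove uniqueness by a strong maximum principle on $|u-\bar u|$. The paper streamlines two of your steps. For the passage to the limit $T_\epsilon u_n\to T_\epsilon u$ it replaces your Dini argument by the single estimate
\[
|S_\epsilon v(x)-S_\epsilon \bar v(x)|\le \sup_{z\in B(0,1)}\fint_{x+\epsilon E_z}|v-\bar v|\le C_\epsilon\int_{\mathcal D}|v-\bar v|,
\]
where $C_\epsilon^{-1}$ is the uniform lower bound on the sampling-ellipse volume, followed by monotone convergence in $L^1(\mathcal D)$. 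For uniqueness it does not try to push the maximum set toward the boundary layer: it argues that $D_M=\{|u-\bar u|=M\}$ is \emph{open} in $\R^N$, and since $D_M$ is also closed (continuity) and nonempty, connectedness gives $D_M=\R^N$, contradicting $u=\bar u$ on $\R^N\setminus\D$.

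The gap in your plan is precisely the step you flag as the main challenge. Tracking equality in $\frac12(\inf+\sup)$ yields only the extremal ellipses $E^\pm$ centered at $y^\pm=x^*+\epsilon z^\pm$, which need not contain $x^*$; and your proposed remedy---``boundedness of $\D$ plus a uniform lower bound on ellipse volumes forces the growing family of maximum-attaining ellipses to reach $\{d_\epsilon<1\}$''---does not work as stated, because the successive extremal ellipses may overlap or cycle inside a fixed compact subset of $\{d_\epsilon=1\}$, so neither boundedness nor a volume floor forces any spillover. Replace this by the paper's open/closed dichotomy: the task becomes purely local (show each maximizer $x_0$ is interior to $D_M$), and no global propagation is needed. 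The paper obtains openness at $x_0$ from the $z=0$ sampling set $E_0=B(x_0,\gamma_\p\epsilon)$, which always contains $x_0$; this is the organizing principle you should aim for rather than an open-ended iteration of extremal ellipses.
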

\begin{proof}
{\bf 1.} The solution $u$ of (\ref{DPP2}) is a fixed point of the
operator $T_\epsilon v = d_\epsilon S_\epsilon v + (1-d_\epsilon)F$. Recall that:
\begin{equation}\label{average}
\begin{split}
& (S_\epsilon v)(x)  = \frac{1}{2}\Big(\inf_{z\in B(0,1)} + \sup_{z\in B(0,1)} \Big)
f_v(x+\epsilon z; x, \epsilon) \\ & \mbox{where: }\quad 
f_v(x+\epsilon z; x,\epsilon) = \fint_{x+\epsilon E(z,\gamma_\p; 1+(a_\p-1)|z|^2, \frac{z}{|z|})}v(w)\dd w.
\end{split}
\end{equation}
Observe that for a fixed $\epsilon$ and $x$, and given a bounded Borel
function $v:\R^N\to \R$, the average $f_v$ is continuous in $z\in\bar B(0,1)$. In view of  continuity of the weight
$d_\epsilon$ and the data $F$, it is not hard to conclude that both
$T_\epsilon,S_\epsilon $ likewise return a bounded continuous function, so in particular
the solution to (\ref{DPP2}) is automatically continuous.
We further note that  $S_\epsilon $ and $T_\epsilon$ are monotone, namely:
$S_\epsilon v\leq S_\epsilon \bar v$ and $T_\epsilon v\leq T_\epsilon\bar v$
if $v\leq \bar v$. 

The solution $u$ of (\ref{DPP2}) is obtained as the limit of
iterations $u_{n+1}=T_\epsilon u_n$, where we set $u_0\equiv const \leq \inf
F$. Since $u_1=T_\epsilon u_0 \geq u_0$ on $\R^N$, by monotonicity of $T_\epsilon$, the
sequence $\{u_n\}_{n=0}^\infty$ is nondecreasing.  It is also bounded
(by $\|F\|_{\mathcal{C}(\R^N)}$) and thus it converges pointwise to a
(bounded, Borel) limit $u:\R^N\to\mathbb{R}$. Observe now that:
\begin{equation}\label{Sfact}
\begin{split}
|T_\epsilon u_n (x) & - T_\epsilon u(x)| \leq |S_\epsilon u_n(x)- S_\epsilon u(x)|  \\ & \leq  \sup_{z\in B(0,1)}
\fint_{x+\epsilon E(z,\gamma_\p; 1+(a_\p-1)|z|^2, \frac{z}{|z|})}
|u_n-u|(w)\dd w \leq C_\epsilon \int_{\mathcal{D}} |u_n - u|(w)\dd w,
\end{split}
\end{equation}
where $C_\epsilon$ is the lower bound on the volume of the sampling
ellipses. By the monotone convergence theorem, it follows that the
right hand side in (\ref{Sfact}) converges to $0$ as
$n\to\infty$. Consequently, $u=T_\epsilon u$, proving existence of solutions to (\ref{DPP2}).

\medskip

{\bf 2.} We now show uniqueness. If  $u, \bar u$ both solve
(\ref{DPP2}), then define $M=\sup_{x\in\R^N}|u(x)-\bar u(x)| = \sup_{x\in\mathcal{D}}|u(x)-\bar
u(x)|$ and consider any maximizer $x_0\in\mathcal{D}$, where $|u(x_0)-\bar u(x_0)| =M$.
By the same bound in (\ref{Sfact}) it follows that:
\begin{equation*}
M  =|u(x_0)-\bar u(x_0)| = d_\epsilon(x_0) |S_\epsilon u(x_0)- S_\epsilon \bar u(x_0)|
\leq \sup_{z\in B(0,1)} f_{|u-\bar u|}(x+\epsilon z; x,\epsilon)\leq M,
\end{equation*}
yielding in particular $\fint_{B(x_0, \gamma_\p\epsilon)}|u-\bar
u|(w)\dd w=M$. Consequently,
$B(x_0, \gamma_\p\epsilon) \subset D_M= \{|u-\bar u|=M\}$ and hence 
the set $D_M$ is open in $\R^N$. Since $D_M$ is 
obviously closed and nonempty, there must be $D_M=\R^N$ and since
$u-\bar u=0$ on $\R^N\setminus \mathcal{D}$,  it follows that $M=0$.
Thus $u=\bar u$, proving the claim.
Finally, we remark that the monotonicity of $S_\epsilon$ yields the monotonicity of the solution
operator to (\ref{DPP2}).
\end{proof}

\begin{Rem}
It follows from (\ref{Sfact}) that the sequence
$\{u_n\}_{n=1}^\infty$ in the proof of Theorem \ref{thD} converges to
$u=u_\epsilon$ uniformly. In fact, the iteration procedure $u_{n+1}=Tu_n$
started by any bounded and continuous function $u_0$ converges uniformly
to the uniquely given $u_\epsilon$. We further remark that if $F$ is Lipschitz
continuous then $u_\epsilon$ is likewise Lipschitz, with Lipschitz
constant depending (in nondecreasing manner) on the following quantities: $1/\epsilon$,
$\|F\|_{\mathcal{C}(\partial\mathcal{D})}$ and the Lipschitz constant
of $F_{\mid\partial\mathcal{D}}$.
\end{Rem}

\begin{Rem}
If we replace $d_\epsilon$ in Theorem \ref{thD} by the indicator
function $\chi_\D$, the resulting solutions to (\ref{DPP2}) will in
general be discontinuous, regardless of the regularity of $F$. The
classical Ascoli-Arzel\`a theorem could not be thus used in this
setting for the proof of
convegence of $\{u_\epsilon\}_{\epsilon\to 0}$. It would still be possible,
however, to obtain the asymptotic regularity and prove the uniform
convergence (see section \ref{sec_convp}) by analyzing the dependence
of variation of $u_\epsilon$ on $\epsilon$.
Another possible interpolation weight in (\ref{DPP2}) is: $\tilde d_\epsilon (x) = \frac{1}{\epsilon}\min\big\{\epsilon,
\mathrm{dist}\big(x, (\R^N\setminus\D)+\bar B_\epsilon(0)\big)\big\}$,
which varies from $0$ to $1$ on the
$(\epsilon, 2\epsilon)$ boundary layer, rather than the layer
$(0,\epsilon)$. All results in this work remain valid with $\tilde
d_\epsilon$, whereas the advantage of such a choice is that the resulting game stopping
position always takes place in $\D$.
\end{Rem}

\bigskip

We prove the following first convergence result. Our argument will be 
analytical, although a probabilistic proof is possible as well,
based on the interpretation of $u_\epsilon$ in Theorem \ref{are_equal}.

\begin{Teo}\label{conv_nondegene}
Let $F\in\mathcal{C}^2(\R^N)$ be a bounded data function that satisfies on some
open set $U$,  compactly containing $\mathcal{D}$: 
\begin{equation}\label{assu1}
\Delta_{\p}F = 0\quad \mbox{ and } \quad \nabla F\neq 0 \qquad \mbox{in }\;  U.
\end{equation}
Then the solutions $u_\epsilon$ of (\ref{DPP2}) converge to $F$ uniformly in $\R^N$, namely:
\begin{equation}\label{unicon1}
\|u_\epsilon - F\|_{\mathcal{C}(\mathcal{D})}\leq C\epsilon \qquad \mbox{ as }\; \epsilon\to 0,
\end{equation}
with a constant $C$ depending on $F$, $U$, $\mathcal{D}$ and $\p$, but
not on $\epsilon$.
\end{Teo}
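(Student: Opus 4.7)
I would sandwich $u_\epsilon$ between two smooth barriers $v_- \leq u_\epsilon \leq v_+$ with $\|v_\pm - F\|_{\mathcal{C}(\D)} \leq C\epsilon$, obtained by perturbing $F$ by $K\epsilon$ times a fixed smooth bounded cutoff of $|x-x_0|^s$, and then invoke a comparison principle mimicking the uniqueness step in the proof of Theorem~\ref{thD}. The starting observation is that, by classical interior regularity for quasilinear uniformly elliptic equations at non-critical points, the hypotheses $\Delta_\p F = 0$ and $\nabla F \neq 0$ in $U$ upgrade $F \in \mathcal{C}^2(\R^N)$ to $F \in \mathcal{C}^\infty(U)$. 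Consequently, the $o(\epsilon^2)$ remainder of Theorem~\ref{th_expansion}, when applied to $F$ or to any smooth perturbation of $F$ with $\mathcal{C}^3$-norm bounded uniformly in $\epsilon$, upgrades to a bona fide $O(\epsilon^3)$, with constant controlled by $\|F\|_{\mathcal{C}^3(U)}$, $(\min_U|\nabla F|)^{-1}$, and the $\mathcal{C}^3$-norm of the perturbation. This quantitative gain of one order in $\epsilon$ is precisely what drives the first-order rate in (\ref{unicon1}).

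\textbf{Construction of the variation.} Fix an exterior base point $x_0 \notin \bar U$, set $\phi(x) = |x - x_0|^s$ for an exponent $s = s(\p, N, F)$ to be chosen, and pick a smooth, bounded, nonnegative function $\hat\phi:\R^N \to [0,B]$ agreeing with $\phi$ on an open neighborhood of $\bar{\D}$, with $B := \sup_{\R^N}\hat\phi$. Denoting by $\mathcal{L}v := \Delta v + (\p-2)\Delta_\infty v = |\nabla v|^{2-\p}\Delta_\p v$ the normalized $\p$-Laplacian from the coefficient in (\ref{expansion}) and by $L$ its linearization at $F$, a direct differentiation at $t=0$ of $\mathcal{L}(F + t\psi)$ gives
\[
L[\psi] = \Delta\psi + (\p - 2)\Big(\langle \nabla^2\psi\,\nu_F, \nu_F\rangle + \tfrac{2}{|\nabla F|}\langle \nabla^2 F\,\nu_F,\, \nabla\psi - \langle \nu_F, \nabla\psi\rangle\nu_F\rangle\Big),
\]
with $\nu_F := \nabla F/|\nabla F|$. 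For $\psi = \phi$ the first two summands combine to $s|x-x_0|^{s-2}\bigl[s + N + \p - 4 + (\p-2)(s-2)\theta^2\bigr]$, where $\theta = \langle(x-x_0)/|x-x_0|, \nu_F\rangle \in [-1,1]$; a case split on $\p \geq 2$ vs.\ $\p < 2$ yields the lower bound $c_1(\p, N)\, s^2|x-x_0|^{s-2}$ for every $\p > 1$ and $s$ large. The remaining cross-term is bounded in absolute value by $c_2\, s|x-x_0|^{s-1}$ with $c_2 = 2|\p-2|\,\|\nabla^2 F\|_\infty/|\nabla F|_{\min}$, hence dominated by the principal part as soon as $s \geq s_0(\p, N, F) := 2c_2\sup_{\bar\D}|x-x_0|/c_1 + 2$; this is the bound (\ref{s_bound}). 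For such $s$, $L[\hat\phi] \geq c > 0$ uniformly on $\bar{\D}$.

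\textbf{Comparison.} With a constant $K = K(F, \hat\phi, \p, N)$ to be determined, set
\[
v_\pm := F \pm K\epsilon\,(B - \hat\phi).
\]
By construction $v_+ \geq F \geq v_-$ on $\R^N$ and $\|v_\pm - F\|_\infty \leq KB\epsilon$. For $K\epsilon$ sufficiently small, $|\nabla v_\pm| \geq \tfrac12 |\nabla F|_{\min}$ on $\bar\D$, so Theorem~\ref{th_expansion} applies. Since $\mathcal{L}v_\pm = \mp K\epsilon\,L[\hat\phi] + O(K^2\epsilon^2)$ uniformly on $\bar\D$, the improved $O(\epsilon^3)$ remainder gives
\[
S_\epsilon v_\pm(x) - v_\pm(x) = \tfrac{\gamma_\p^2\epsilon^2}{2(N+2)}|\nabla v_\pm|^{2-\p}\mathcal{L}v_\pm + O(\epsilon^3) = \mp \tilde c\, K\epsilon^3 + O(K^2\epsilon^4) + O(\epsilon^3)
\]
uniformly in $x \in \D$. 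Choosing $K$ large enough to beat the $O(\epsilon^3)$ remainder constant and $\epsilon$ small enough so that $K\epsilon$ is small, the right-hand sides acquire the signs $\mp$; combined with $v_+ \geq F \geq v_-$ on $\R^N\setminus\D$, this makes $v_+$ a DPP supersolution and $v_-$ a DPP subsolution of (\ref{DPP2}). The sup-propagation argument from the uniqueness step of Theorem~\ref{thD}, applied to $u_\epsilon - v_+$ and to $v_- - u_\epsilon$, forces a positive supremum of either difference to be attained on an open subset of $\R^N$ that cannot meet $\R^N\setminus\D$, contradicting connectedness of $\D$. Hence $v_- \leq u_\epsilon \leq v_+$, so $\|u_\epsilon - F\|_{\mathcal{C}(\D)} \leq KB\epsilon$ for all small $\epsilon$; the remaining bounded range of $\epsilon$ is absorbed via the a priori bound $\|u_\epsilon\|_{\mathcal{C}(\R^N)} \leq \|F\|_{\mathcal{C}(\R^N)}$ from Theorem~\ref{thD}.

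\textbf{Main obstacle.} The principal delicacy is carrying out the linearization computation sharply enough to extract the explicit threshold (\ref{s_bound}): the sign of $(\p-2)(s-2)\theta^2$ in the principal part changes with $\p \geq 2$ vs.\ $\p < 2$, and the $\nabla^2 F$ cross-term --- which arises from the $t$-variation of $\nu_v = \nabla v/|\nabla v|$ --- must be uniformly dominated on $\bar\D$ by the $s^2$-part of $\Delta\phi$. A secondary technical point is the calibration of $K$ so that the negative $-\tilde c K\epsilon^3$ contribution dominates the $O(\epsilon^3)$ mean-value remainder uniformly in $\epsilon$, which is exactly where the promotion of the remainder from a mere $o(\epsilon^2)$ to a genuine $O(\epsilon^3)$, via $F \in \mathcal{C}^\infty(U)$, becomes essential.
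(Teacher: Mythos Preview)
Your proof is correct and follows essentially the same strategy as the paper's: perturb $F$ by $\epsilon$ times a large power $|x-x_0|^s$, use the $\mathcal{C}^\infty$-regularity of $F$ (from $\nabla F\neq 0$ and $\Delta_\p F=0$) to upgrade the expansion remainder to $O(\epsilon^3)$, choose $s$ large so that the perturbation creates a strict DPP barrier, and conclude via the propagation/comparison argument from Theorem~\ref{thD}. The packaging differs only cosmetically---the paper adds $+\epsilon|x|^s$ to build a \emph{sub}solution $v_\epsilon\geq F$ and bounds $\max(v_\epsilon-u_\epsilon)$ directly, whereas you add $\pm K\epsilon(B-\hat\phi)$ to build a super/sub pair and invoke comparison; your linearization of $\mathcal{L}$ at $F$ is the first-order expansion of the paper's direct computation of $\Delta_\p v_\epsilon$, and your threshold on $s$ matches~(\ref{s_bound}) qualitatively.
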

\begin{proof}
{\bf 1.} We first note that since $u_\epsilon=F$ on $\R^N\setminus
\mathcal{D}$ by construction, (\ref{unicon1}) indeed implies the uniform
convergence of $u_\epsilon$ in $\R^N$. 
Also, by translating $\mathcal{D}$ if necessary, we may without loss
of generality assume that $B(0,1)\cap U=\emptyset$.

We now show that there exists $s\geq 2$ and $\hat{\epsilon}>0$ such that the following functions:
\begin{equation*}
v_\epsilon(x)=F(x)+\epsilon|x|^s
\end{equation*}
satisfy, for every $\epsilon\in (0, \hat\epsilon)$:
\begin{equation}\label{zzz}
\nabla v_\epsilon \neq 0 \quad \mbox{ and }\quad \Delta_{\p} v_\epsilon \geq
\epsilon s \cdot |\nabla {v_\epsilon}|^{{\p}-2} \quad \mbox{ in }\;\bar{\mathcal{D}}.
\end{equation}
Observe first the following direct formulas:
\begin{equation*}
\begin{split}
& \nabla |x|^s = s|x|^{s-2}x, \qquad \nabla^2|x|^s = s(s-2)
|x|^{s-4}x^{\otimes 2} + s |x|^{s-2} Id_N, \\ &
\Delta |x|^s = s(s-2+N)|x|^{s-2}.
\end{split}
\end{equation*}
Fix $x\in \bar\D$ and denote $a=\nabla v_\epsilon(x)$ and $b=\nabla F(x)$. Then, by (\ref{assu1}) we have:
\begin{equation}\label{jedena}
\begin{split}
\Delta_\p v_\epsilon(x) & = |\nabla v_\epsilon(x)|^{\p-2}
\Bigg(\epsilon \Delta |x|^s + (\p-2) \epsilon \Big\langle  \nabla^2|x|^s :
\big(\frac{a}{|a|}\big)^{\otimes 2} \Big\rangle \\ &
\qquad\qquad\qquad\qquad\qquad  +  (\p-2) \Big\langle \nabla^2F(x) :
\big(\frac{a}{|a|}\big)^{\otimes 2}- \big(\frac{b}{|b|}\big)^{\otimes 2} \Big\rangle\Bigg) \\ &
\geq |\nabla v_\epsilon(x)|^{\p-2} \epsilon s |x|^{s-2} \Bigg( s-2+N + (\p-2)
\Big(1 - \frac{4|\nabla^2F(x)|}{|\nabla F(x)|} |x|\Big)\Bigg).
 \end{split}
\end{equation}
Above, we have also used the bound:
$$ \Big\langle \nabla^2|x|^s : \big(\frac{a}{|a|} \big)^{\otimes 2} \Big\rangle = s
(s-2) |x|^{s-2} \Big\langle \frac{a}{|a|}, \frac{x}{|x|} \Big\rangle^2 + s|x|^{s-2} \geq s |x|^{s-2},$$
together with the straightforward estimate:
$ \big|\big(\frac{a}{|a|}\big)^{\otimes 2} -
\big(\frac{b}{|b|}\big)^{\otimes 2}\big|\leq 4\frac{|a-b|}{|b|}. $
The claim (\ref{zzz}) hence follows by fixing a large  exponent $s$ that satisfies:
\begin{equation}\label{s_bound}
{s\geq 3-N+ |\p-2|\cdot \max_{y\in\bar\D} \Big\{4|y|\frac{|\nabla^2 F(y)|}{|\nabla F(y)|}}\Big\},
\end{equation}
so that the quantity in the last
line parentheses in (\ref{jedena}) is greater than $1$, and further
taking $\epsilon > 0$ small enough to have: $\min_{\bar\D} |\nabla v_\epsilon|>0$.

\medskip

{\bf 2.} We claim that $s$ and $\hat{\epsilon}$ in step 1 can further
be chosen in a way that for all $\epsilon\in(0,\hat{\epsilon})$:
\begin{equation}\label{want}
v_\epsilon\leq S_\epsilon v_\epsilon \quad \text{in}\,\,\bar{\mathcal{D}}.
\end{equation}
Indeed, a careful analysis of the remainder terms in the expansion \eqref{expansion} reveals that:  
\begin{equation}\label{want2}
v_\epsilon(x) - S_\epsilon v_\epsilon(x)= - \frac{\epsilon^2}{\p-1}
|\nabla{v}_\epsilon(x)|^{2-\p}\Delta_{\p} v_\epsilon(x) + R_2(\epsilon,s), 
\end{equation} 
where: 
$$|R_2(\epsilon,s)|\leq C_\p \epsilon^2
\displaystyle{\osc_{B(x,(1+\gamma_p)\epsilon)}}|\nabla^{2} v_\epsilon|+C \epsilon^3.$$ 
Above, we denoted by $C_\p$ a constant depending only on $\p$, whereas
$C$ is a constant depending only $|\nabla {v_\epsilon}|$
and $|\nabla^2v_\epsilon|$, that
remain uniformly bounded  for small $\epsilon$.
Since $v_\epsilon$ is the sum of the smooth on $U$ function $x\mapsto \epsilon|x|^s$,
and a $\p$-harmonic function $F$ that is also smooth in virtue of its non vanishing
gradient  (this is a classical result \cite{Morrey_book}),
we obtain that  (\ref{want2}) and (\ref{zzz}) imply (\ref{want}) for $s$ sufficiently large and taking
$\epsilon$ appropriately small.

\medskip

{\bf 3.} Let $A$ be a compact set in: $\mathcal{D}\subset
A\subset U$. Fix $\epsilon\in (0,\hat\epsilon)$ and for each $x\in A$ consider:
$$\phi_\epsilon(x)=v_\epsilon(x)-u_\epsilon(x) = F(x)-u_\epsilon(x)+\epsilon|x|^s.$$ 
By (\ref{want}) and (\ref{DPP2}) we get:
\begin{equation}\label{13}
\begin{split}
\phi_\epsilon(x) &= d_\epsilon(x)( v_\epsilon(x)-S_\epsilon u_\epsilon(x) ) + (1-d_\epsilon(x))(v_\epsilon(x)-F(x))\\
&\leq d_\epsilon(x)( S_\epsilon v_\epsilon(x)-S_\epsilon u_\epsilon(x) )
+( 1-d_\epsilon(x) )(v_\epsilon(x)-F(x))\\
&\leq d_\epsilon(x) \sup_{y\in  B(0,1)}f_{\phi_\epsilon}\big(x+\epsilon y, x, \epsilon\big) + (
1-d_\epsilon(x) )\big(v_\epsilon(x)-F(x)\big). 
\end{split}
\end{equation}
Define: 
$$M_\epsilon=\max_{A}\phi_\epsilon.$$ 
We claim that there exists $x_0\in A$ with $d_\epsilon(x_0)<1$ and such that
$\phi_\epsilon(x_0)=M_\epsilon$.
To prove the claim, define
$\mathcal{D}^\epsilon=\big\{x\in\mathcal{D};~ \mbox{dist}(x,\partial\mathcal{D})\geq \epsilon \}$. 
We can assume that the closed set $\mathcal{D}^\epsilon\cap\{\phi_\epsilon=M_\epsilon\}$ is nonempty; 
otherwise the claim would be obvious. Let $\mathcal{D}_0^\epsilon$ be
a nonempty connected component of $\mathcal{D}^\epsilon$ and denote
$\mathcal{D}_M^\epsilon=\mathcal{D}_0^\epsilon\cap
\{\phi_\epsilon=M_\epsilon\}$. Clearly,  $\mathcal{D}_M^\epsilon$ is
closed in $\mathcal{D}_0^\epsilon$; we now show that it is also open. Let
$x\in\mathcal{D}_M^\epsilon$. Since $d_\epsilon(x)=1$ from (\ref{13}) it follows that: 
\begin{equation*}
M_\epsilon =\phi_\epsilon(x) \leq \sup_{y\in
  B(x,\epsilon)}\mathcal{A}\Big(\phi_\epsilon; \gamma_\p\epsilon +
(a_\p-1)\frac{|y-x|^2}{\epsilon^2}, \frac{y-x}{|y-x|}\Big)(y)  \leq M_\epsilon.
\end{equation*} 
Consequently, $\phi_\epsilon\equiv M_\epsilon$ in $B(x,\gamma_\p
\epsilon)$ and thus we obtain the openness 
of $\mathcal{D}_M^\epsilon$ in $\mathcal{D}_0^\epsilon$.
In particular, $\mathcal{D}_M^\epsilon$ contains a point $\bar{x}\in\partial
\mathcal{D}^\epsilon$. Repeating the previous argument for $\bar x$
results in $\phi_\epsilon\equiv M_\epsilon$ in $B(\bar{x}, \gamma_\p\epsilon)$,
proving the claim.

\medskip

{\bf 4.} We now complete the proof of Theorem \ref{conv_nondegene} by deducing a bound on $M_\epsilon$.
If $M_\epsilon=\phi_\epsilon(x_0)$ for some $x_0\in\bar{\mathcal{D}}$
with $d_\epsilon(x_0)<1$, then (\ref{13}) yields:
$ M_\epsilon=\phi_\epsilon(x_0)
\leq d_\epsilon(x_0)M_\epsilon +(1-d_\epsilon(x_0))\big(v_\epsilon(x_0)-F(x_0)\big), $
which implies:
$$ M_\epsilon\leq v_\epsilon(x_0)-F(x_0)=\epsilon|x_0|^s .$$
On the other hand, if $M_\epsilon=\phi_\epsilon(x_0)$ for some $x_0\in
A\setminus \mathcal{D}$, then $d_\epsilon(x_0)=0$, hence likewise:
$ M_\epsilon=\phi_\epsilon(x_0)= v_\epsilon(x_0)-F(x_0)=
\epsilon|x_0|^s.$ In either case:
$$ \max_{\bar{\mathcal{D}}}( u-u_\epsilon)\leq
\max_{\bar{\mathcal{D}}}\phi_\epsilon + C\epsilon \leq 2C\epsilon $$
where $C=\max_{x\in V}|x|^s$ is independent of $\epsilon$. A
symmetric argument applied to $-u$ after noting that
$(-u)_\epsilon=-u_\epsilon$ gives:  $  \min_{\bar{\mathcal{D}}}( u-u_\epsilon)\geq -2C\epsilon$.
The proof is done.
\end{proof}

\section{The random Tug of War game modelled on (\ref{expansion})}\label{sec_setup_p}

We now develop the probability setting related to the expansion (\ref{expansion}).

\bigskip

{\bf 1.} Let $\Omega_1 = B(0,1)\times \{1,2\}\times (0,1)$ and define:
\begin{equation*}
\begin{split}
\Omega= (\Omega_1)^{\mathbb{N}} = \big\{&\omega=\{(w_i, s_i,
t_i)\}_{i=1}^\infty; ~ w_i\in B(0,1),~ s_i\in \{1,2\},~ t_i\in (0,1) ~~\mbox{ for all }
i\in\mathbb{N}\big\}.
\end{split}
\end{equation*}
The probability space $(\Omega, \mathcal{F}, \mathbb{P})$ 
is given as the countable product of $(\Omega_1, \mathcal{F}_1,
\mathbb{P}_1)$. Here, $\mathcal{F}_1$ is the smallest $\sigma$-algebra
containing all products $D\times S\times B$ where $D\subset
B(0,1)\subset\mathbb{R}^N$ and $B\subset (0,1)$ are Borel, and
$S\subset\{1,2\}$. The measure $\mathbb{P}_1$ is 
the product of: the normalised Lebesgue measure on $B(0,1)$, the
uniform counting measure on $\{1,2\}$ and the Lebesgue measure on $(0,1)$:
$$\mathbb{P}_1(D\times S\times B) = \frac{|D|}{|B(0,1)|}\cdot \frac{|S|}{2}\cdot |B|.$$
For each $n\in\mathbb{N}$, the probability space $(\Omega_n, \mathcal{F}_n, \mathbb{P}_n)$ is the
product of $n$ copies of $(\Omega_1, \mathcal{F}_1,
\mathbb{P}_1)$. The $\sigma$-algebra $\mathcal{F}_n$
is always identified with the sub-$\sigma$-algebra of $\mathcal{F}$,
consisting of sets $A\times \prod_{i=n+1}^\infty\Omega_1$
for all $A\in\mathcal{F}_n$. The sequence $\{\mathcal{F}_n\}_{n=0}^\infty$ where 
$\mathcal{F}_0= \{\emptyset, \Omega\}$, is a filtration of $\mathcal{F}$. 

\medskip

{\bf 2.} Given are two families of functions $\sigma_I=\{\sigma_I^n\}_{n=0}^\infty$ and
$\sigma_{II}=\{\sigma_{II}^n\}_{n=0}^\infty$, defined on the
corresponding spaces of ``finite histories'' $H_n=\R^N\times (\R^N\times\Omega_1)^n$:
$$\sigma_I^n, \sigma_{II}^n:H_n\to B(0,1)\subset\R^N,$$
assumed to be measurable with respect to the (target) Borel
$\sigma$-algebra in $B(0,1)$ and the (domain) product $\sigma$-algebra on $H_n$.
For every $x_0\in\R^N$ and $\epsilon\in (0,1)$ we recursively define:
$$\big\{X_n^{\epsilon, x_0, \sigma_I, \sigma_{II}}:\Omega\to\R^N\big\}_{n=0}^\infty.$$
For simplicity of notation, we often suppress some of the superscripts $\epsilon, x_0, \sigma_I, \sigma_{II}$
and write $X_n$ (or $X_n^{x_0}$, or $X_n^{\sigma_I, \sigma_{II}}$,
etc) instead of $ X_n^{\epsilon, x_0, \sigma_I, \sigma_{II}}$, if no ambiguity arises. Let: 
\begin{equation}\label{processMp}
\begin{split}
& \, X_0\equiv x_0, \\ & \, X_n\big((w_1,s_1,t_1), \ldots,
(w_n,s_n,t_n)\big) \\ & \quad = x_{n-1} + \left\{\begin{array}{ll} 
\epsilon\Big(\sigma_I^{n-1}(h_{n-1}) + \gamma_\p w_n +
\gamma_\p(a_\p-1)\langle w_n, \sigma_I^{n-1}(h_{n-1})\rangle \sigma_I^{n-1}(h_{n-1})\Big) &
\mbox{for } s_n=1 \vspace{1mm} \\ \epsilon\Big(\sigma_{II}^{n-1}(h_{n-1}) + \gamma_\p w_n +
\gamma_\p(a_\p-1)\langle w_n, \sigma_{II}^{n-1}(h_{n-1})\rangle \sigma_{II}^{n-1}(h_{n-1})\Big) &
\mbox{for } s_n=2, \end{array} \right.\\
& \mbox{ where } ~~ x_{n-1}= X_{n-1}\big((w_1,s_1,t_1), \ldots,(w_{n-1},s_{n-1},t_{n-1})\big)  \\ &
\mbox{ and } ~~ ~ h_{n-1}= \big(x_0, (x_1,w_1,s_1,t_1), \ldots,
(x_{n-1},w_{n-1},s_{n-1},t_{n-1})\big)\in H_{n-1}.
\end{split}
\end{equation}
In this ``game'', the position $x_{n-1}$ is first advanced
(deterministically) according to the two players' ``strategies''
$\sigma_I$ and $\sigma_{II}$ by a shift 
$\epsilon y\in B(0, {\epsilon})$, and then (randomly) uniformly by a further
shift in the ellipse $\epsilon E\big(0,\gamma_\p; 1+(a_\p-1)|y|^2,\frac{y}{|y|}\big)$. The
deterministic shifts are activated  by the value of the equally probable outcomes:
$s_n=1$ activates $\sigma_I$ and $s_n=2$ activates $\sigma_{II}$.

\medskip

{\bf 3.} The auxiliary variables $t_n\in (0,1)$ serve as a threshold
for reading the eventual value from the prescribed boundary data.
Let $\mathcal{D}\subset\R^N$ be an open, bounded and connected
set. Define the random variable $\tau^{\epsilon, x_0, \sigma_I,
\sigma_{II}}:\Omega\to\mathbb{N}\cup\{\infty\}$ in: 
$$\tau^{\epsilon, x_0, \sigma_I,
  \sigma_{II}}\big((w_1,s_1,t_1),(w_2,s_2,t_2),\ldots\big)=\min\big\{n\geq
1;~ t_n>d_\epsilon(x_{n-1})\big\},$$
where:
$$d_\epsilon(x) = \frac{1}{\epsilon}\min\big\{\epsilon, \mbox{dist}(x,
\R^N\setminus\mathcal{D})\big\}$$ 
is the scaled distance from the complement of
$\mathcal{D}$. As before, we drop the superscripts and write $\tau$
instead of $\tau^{\epsilon, x_0, \sigma_I, \sigma_{II}}$ if there is
no ambiguity. Our ``game'' is thus terminated, with
probability $1- d_\epsilon(x_{n-1})$, whenever the
position $x_{n-1}$ reaches the $\epsilon$-neighbourhood of
$\partial\mathcal{D}$. 

\medskip

\begin{Lemma}\label{facom}
If the scaling factors $a_\p, \gamma_\p>0$ in (\ref{prope}) satisfy:
\begin{equation}\label{prope2}
a_\p\leq 1 \; \mbox{ and } \;\gamma_\p a_\p>1 \qquad \mbox{ or }\qquad 
a_\p\geq 1\; \mbox{ and } \; \gamma_\p >1, 
\end{equation}
then $\tau$ is a stopping time relative to the filtration
$\{\mathcal{F}_n\}_{n=0}^\infty$, namely: $\mathbb{P}(\tau<\infty)=1$. 
Further, for any $\p\in (1,\infty)$ there exist positive $a_\p,\gamma_\p$ with
(\ref{prope}) and (\ref{prope2}).
\end{Lemma}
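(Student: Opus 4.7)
My plan is to treat the two assertions separately. The existence of $a_\p, \gamma_\p$ satisfying both (\ref{prope}) and (\ref{prope2}) is an elementary parameter analysis: writing $\gamma_\p^2 = (N+2)/(\p-1-a_\p^2)$ from (\ref{prope}), the first alternative of (\ref{prope2}) reduces to $(\p-1)/(N+3) < a_\p^2 \leq \min(1, \p-1)$, which is solvable whenever $\p < N+4$; the second alternative reduces to $\max(1, \p-N-3) \leq a_\p^2 < \p-1$, solvable for $\p > 2$. The two regimes overlap and jointly cover all of $(1,\infty)$.

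For the almost sure finiteness of $\tau$, my central tool will be a uniform geometric minorization of the one-step transition. Conditional on $\mathcal{F}_{n-1}$ and $s_n$, the step $X_n - X_{n-1}$ is uniformly distributed over the translated ellipse $E(\epsilon\sigma, \epsilon\gamma_\p; 1+(a_\p-1)|\sigma|^2, \sigma/|\sigma|)$, where $\sigma\in B(0,1)$ is the strategic shift. I would first verify that under (\ref{prope2}) this ellipse contains a ball $B(0, \epsilon\eta)$ around the current position, for some $\eta > 0$ independent of $\sigma$. This boils down to the bound $\alpha\gamma_\p > 1$ with $\alpha = 1+(a_\p-1)|\sigma|^2$: the first alternative gives $\alpha \geq a_\p$ and hence $\alpha\gamma_\p \geq a_\p\gamma_\p > 1$, while the second gives $\alpha \geq 1$ and hence $\alpha\gamma_\p \geq \gamma_\p > 1$. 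A continuity/compactness argument on the compact range of $|\sigma|$ then yields the uniform $\eta$.

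With this minorization in hand, the rest is a standard coupling-towards-the-exterior argument. Fix $x^* \in \R^N\setminus\bar\D$ and set $v_n = (x^*-X_{n-1})/|x^*-X_{n-1}|$. The density on the support ellipse is comparable to $\epsilon^{-N}$, so the probability that $X_n - X_{n-1}$ lies in the sub-ball $B(\epsilon\eta v_n/2, \epsilon\eta/4) \subset B(0, \epsilon\eta)$ is bounded below by a constant $p_1 > 0$ depending only on $N, \gamma_\p, a_\p$, uniformly in starting position and strategies. On the intersection of $K := \lceil 4L/(\epsilon\eta)\rceil + 1$ such events (with $L = \max_{x \in \bar\D}|x-x^*|$), the distance $|X_n - x^*|$ decreases by at least $\epsilon\eta/4$ at each step, forcing $X_K \notin \bar\D$ and hence $d_\epsilon(X_K) = 0$, so $\tau \leq K+1$. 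By the tower property, $\mathbb{P}(\tau \leq K+1) \geq p_1^K > 0$ uniformly, and iteration over disjoint time blocks via the strong Markov property delivers $\mathbb{P}(\tau < \infty) = 1$.

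The main technical hurdle is the geometric claim in the second paragraph: this is where the two compatibility alternatives of (\ref{prope2}) are genuinely invoked, and care is needed to control the $\sigma$-dependence of the aspect ratio $\alpha$ along both the $\nu$-axis and the transverse directions. Once the uniform minorization is secured, the probabilistic conclusion follows by the routine coupling sketched above.
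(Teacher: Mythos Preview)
Your proposal is correct. The parameter analysis for existence is essentially the paper's own computation, just solved for $a_\p$ rather than $\gamma_\p$. For the almost-sure stopping, both you and the paper establish a uniform lower bound $\mathbb{P}(\tau\le n_0)\ge \eta_0>0$ valid for all starting points and all strategies, and then iterate over blocks of length $n_0$ to get $\mathbb{P}(\tau>kn_0)\le (1-\eta_0)^k\to 0$.

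The difference lies in the geometric mechanism used to produce that uniform lower bound. The paper fixes the single direction $e_1$ and exhibits a cap $D_{adv}=\{w\in B(0,1):\langle w,e_1\rangle>1-\beta\}$ on which the $e_1$-component of the \emph{random} displacement $\gamma_\p\big(w+(a_\p-1)\langle w,\sigma\rangle\sigma\big)$ exceeds $1$ for every $\sigma\in B(0,1)$; since the deterministic part $\sigma$ contributes at most $1$ in magnitude, the token drifts in the $+e_1$ direction whenever $w\in D_{adv}$. Your route is instead to verify that the full sampling ellipse $E(\epsilon\sigma,\epsilon\gamma_\p;\alpha,\nu)$ contains a ball $B(0,\epsilon\eta)$ about the current position (this is exactly where the bound $\alpha\gamma_\p>1$ enters, with the transverse direction handled by $\gamma_\p>1$ in both alternatives), and then steer toward a moving target direction $v_n$ pointing at a fixed exterior point. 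Both arguments hinge on the same inequality and both yield the same iteration; the paper's version is slightly more direct (a one-line coordinate estimate, no need to control the transverse semi-axes), while yours packages the conclusion as a uniform minorization of the transition kernel, which is a cleaner conceptual statement. One small remark: the process is not Markov since strategies may depend on the full history, so your appeal to the ``strong Markov property'' should be phrased instead as a conditional-probability bound uniform over $\mathcal{F}_{n-1}$---which is precisely what your minorization gives.
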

\begin{proof}
Let $a_\p\leq 1$ and $\gamma_\p a_\p>1$. Then, for some $\beta>0$, there
also holds: $\gamma_\p(a_\p - \beta)>1$. Define an open set of
``advancing random shifts'':
$$D_{adv}=\big\{ w\in B(0,1);~ \langle w, e_1\rangle > 1-\beta\big\}.$$
For every $\sigma\in B(0,1)$ and every $w\in D_{adv}$ we have:
$$\gamma_\p\big\langle w+(a_\p - 1)\langle w, \sigma\rangle \sigma,
e_1\big\rangle \geq \gamma_\p \big(\langle w, e_1\rangle + a_\p - 1\big) >
\gamma_\p (a_\p-\beta).$$
Since $\D$ is bounded, the above estimate implies existence of $n\geq 1$
(depending on $\epsilon$) such that for all initial points $x_0\in\D$
and all deterministic shifts $\{\sigma^i\in B(0,1)\}_{i=1}^n$ there holds:
$$x_0 + \epsilon \sum_{i=1}^n \big(\sigma^i + \gamma_\p w_i +
\gamma_\p (a_\p-1) \langle w_i,\sigma^i\rangle
\sigma^i\big)\not\in\D\qquad \mbox{ for all }\;\{w_i\in D_{adv}\}_{i=1}^n.$$
In conclusion:
$$\mathbb{P}(\tau\leq n) \geq \mathbb{P}_n\Big( \big(D_{adv}\times \{1,2\}\times
(0,1)\big)^n\Big) = \Big(\frac{|D_{adv}|}{|B(0,1)|}\Big)^n = \eta>0$$
and so $\mathbb{P}(\tau>kn)\leq (1-\eta)^k$ for all $k\in\mathbb{N}$,
yielding: $\mathbb{P}(\tau=\infty) = \lim_{k\to\infty}\mathbb{P}(\tau>kn)=0$.

\smallskip

The proof proceeds similarly when $a_\p\geq 1$ and $\gamma_\p >1$. Fix
$\bar\beta>0$ such that $\gamma_\p(1-\bar\beta)>1$ and define
$D_{adv}$ as before, for an appropriately small $0<\beta\ll
\bar\beta$, ensuring that:
$$\gamma_\p\big\langle w+(a_\p - 1)\langle w, \sigma\rangle \sigma,
e_1\big\rangle \geq \gamma_\p \big(\langle w, e_1\rangle - (a_\p - 1) \sqrt{2\beta}\big) >
\gamma_\p (1-\bar\beta)$$
for every $\sigma\in B(0,1)$ and every $w\in D_{adv}$. Again, after at most $\Big\lceil
\frac{\mbox{diam}\;\D}{\epsilon(\gamma_\p (1-\bar\beta) -
 1)}\Big\rceil$ shifts, the token will leave the domain $\D$
(unless it is stopped earlier) and the game will be terminated.

\smallskip

It remains to prove existence of $\gamma_\p, a_p>0$ satisfying
(\ref{prope}) and (\ref{prope2}). We observe that the viability of
$a_\p\leq 1$, $\gamma_\p a_\p>1$ is equivalent to:
$\frac{1}{\gamma_\p^2}<\p-1-\frac{N+2}{\gamma_\p^2}\leq 1$ and further
to:  $\frac{\p-2}{N+2}\leq \frac{1}{\gamma_\p^2}<\frac{\p-1}{N+3}$,
which allows for choosing $\gamma_\p$ (and $a_\p$) for $\p<N+4$. On
the other hand, viability of $a_\p\geq 1$, $\gamma_\p >1$ is
equivalent to: $\gamma_\p^2>1$ and $\p-1-\frac{N+2}{\gamma_\p^2}\geq
1$, that is: $\frac{1}{\gamma_\p^2}<\min\big\{1,
\frac{\p-2}{N+2}\big\}$, yielding existence of $\gamma_\p, a_\p$ for $\p>2$.
\end{proof}

\bigskip

{\bf 4.} From now on, we will work under the additional requirement
(\ref{prope2}).  In our ``game'', the first ``player'' collects from his
opponent the payoff given by the data $F$ at the stopping position. The incentive
of the collecting ``player'' to maximize the outcome and of the
disbursing ``player'' to minimize it, leads to
the definition of the two game values below.

Let $F:\R^N\to\mathbb{R}$ be a countinuous function. Then we have:
\begin{equation}\label{ue_def_p}
\begin{split}
& u_I^\epsilon(x) =
\sup_{\sigma_I}\inf_{\sigma_{II}}\mathbb{E}\Big[F\circ \big(X^{\epsilon, x,
\sigma_I, \sigma_{II}}\big)_{\tau^{\epsilon, x, \sigma_I,  \sigma_{II}}-1}\Big], \\
& u_{II}^\epsilon(x) =
\inf_{\sigma_{II}}\sup_{\sigma_{I}}\mathbb{E}\Big[F\circ \big(X^{\epsilon, x,
\sigma_I, \sigma_{II}}\big)_{\tau^{\epsilon, x, \sigma_I, \sigma_{II}}-1}\Big]. 
\end{split}
\end{equation}
The main result in Theorem \ref{are_equal} will show that $u_I^\epsilon = u_{II}^\epsilon
\in\mathcal{C}(\R^N)$ coincide with the unique solution to the
dynamic programming principle in section \ref{sec_dpp_analysis},
modelled on the expansion (\ref{expansion}). It is also clear that 
$u_{I, II}^\epsilon$ depend only on the values of $F$ in the
$\epsilon$-neighbourhood of $\partial\mathcal{D}$. In
section \ref{sec_convp} we will prove that as $\epsilon\to 0$, the
uniform limit of $u_{I, II}^\epsilon$ that depends only on $F_{\mid\partial\mathcal{D}}$,
is $\p$-harmonic in $\mathcal{D}$ and attains $F$ on
$\partial\mathcal{D}$, provided that $\partial\mathcal{D}$ is regular.

\bigskip

\begin{Teo}\label{are_equal}
For every $\epsilon\in (0,1)$, let $u_I^\epsilon$, $u_{II}^\epsilon$
be as in (\ref{ue_def_p}) and $u_\epsilon$ as in Theorem \ref{thD}. Then: 
$$u_I^\epsilon =  u_\epsilon =u_{II}^\epsilon.$$
\end{Teo}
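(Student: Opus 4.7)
The plan is to sandwich $u_\epsilon$ between the lower game value $u_I^\epsilon$ and the upper game value $u_{II}^\epsilon$, using the trivial inequality $u_I^\epsilon\leq u_{II}^\epsilon$ (a consequence of $\sup\inf\leq\inf\sup$). It will thus suffice to prove $u_{II}^\epsilon(x_0)\leq u_\epsilon(x_0)\leq u_I^\epsilon(x_0)$; I describe the argument for the first inequality, the second being obtained by exchanging the roles of the two players.

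Fix $\eta>0$. Since $u_\epsilon\in\mathcal{C}(\R^N)$ by Theorem \ref{thD}, the map $z\mapsto f_{u_\epsilon}(x+\epsilon z;x,\epsilon)$ from (\ref{average}) is jointly continuous in $(x,z)\in\R^N\times \bar B(0,1)$ (at $z=0$ the aspect ratio degenerates to $1$ so the orientation becomes immaterial). A standard measurable selection argument then yields a Player II strategy $\sigma_{II}^\ast=\{\sigma_{II}^{\ast,n}\}_{n=0}^\infty$ with
$$f_{u_\epsilon}\bigl(x+\epsilon\sigma_{II}^{\ast,n}(h_n);\,x,\epsilon\bigr)\leq \inf_{z\in\bar B(0,1)}f_{u_\epsilon}(x+\epsilon z;x,\epsilon)+\eta\cdot 2^{-n}, \qquad x=x_n.$$
The $\eta\cdot 2^{-n}$ slack is the crucial feature: it removes the need for the pointwise infima to be attained by a Borel selector and so bypasses precisely the pathologies warned of in \cite{PS}.

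Next, fix an arbitrary $\sigma_I$ and write $X_n=X_n^{\epsilon,x_0,\sigma_I,\sigma_{II}^\ast}$, $\tau=\tau^{\epsilon,x_0,\sigma_I,\sigma_{II}^\ast}$. I claim that the process
$$\tilde Z_n=F(X_{\tau-1})\chi_{\{\tau\leq n\}}+u_\epsilon(X_n)\chi_{\{\tau>n\}}-\eta\sum_{k=1}^{n}2^{-k-1}$$
is a $\{\mathcal{F}_n\}$-supermartingale. On $\{\tau\leq n-1\}$ the claim is immediate; on $\{\tau>n-1\}$, setting $x=X_{n-1}$ and using independence of $(w_n,s_n,t_n)$ from $\mathcal{F}_{n-1}$,
\begin{equation*}
\begin{split}
\mathbb{E}[\tilde Z_n\mid\mathcal{F}_{n-1}] & = (1-d_\epsilon(x))F(x)+\tfrac{d_\epsilon(x)}{2}\bigl(f_{u_\epsilon}(x+\epsilon\sigma_I;x,\epsilon)+f_{u_\epsilon}(x+\epsilon\sigma_{II}^\ast;x,\epsilon)\bigr)\\
& \quad -\eta\sum_{k=1}^{n}2^{-k-1}\\
& \leq (1-d_\epsilon(x))F(x)+d_\epsilon(x)S_\epsilon u_\epsilon(x)+\eta\cdot 2^{-n-1}-\eta\sum_{k=1}^{n}2^{-k-1}= \tilde Z_{n-1},
\end{split}
\end{equation*}
where the last equality uses the dynamic programming principle (\ref{DPP2}) to identify $(1-d_\epsilon(x))F(x)+d_\epsilon(x)S_\epsilon u_\epsilon(x)$ with $u_\epsilon(x)$, and then telescopes the correction series.

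Since $\tau<\infty$ almost surely by Lemma \ref{facom} and $|\tilde Z_n|\leq \|F\|_{\mathcal{C}(\R^N)}+\eta$ uniformly (using $\|u_\epsilon\|_{\mathcal{C}}\leq\|F\|_{\mathcal{C}}$ from Theorem \ref{thD}), optional stopping gives $\mathbb{E}[F(X_{\tau-1})]\leq u_\epsilon(x_0)+\eta/2$. Taking $\sup_{\sigma_I}$ followed by $\inf_{\sigma_{II}}$ and then sending $\eta\to 0$ completes the proof of $u_{II}^\epsilon(x_0)\leq u_\epsilon(x_0)$. The main obstacle is the Borel measurability of the near-optimal strategy; once the continuity of $u_\epsilon$ provided by Theorem \ref{thD} is combined with the summable $\eta\,2^{-n}$ cushion, measurable selection becomes routine, and the martingale identification with the stopping payoff then flows from a direct unwinding of the DPP.
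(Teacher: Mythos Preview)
Your argument is essentially the paper's own: build a near-infimizing Borel strategy for Player~II with geometrically decaying slack, verify that the process combining $u_\epsilon(X_n)$ before stopping with $F(X_{\tau-1})$ after (plus a correction term) is a supermartingale via the DPP~(\ref{DPP2}), and finish by optional stopping; the paper's correction is the additive $+\eta/2^n$ rather than your subtracted partial sum, and it spells out the measurable selection via a locally finite cover of $\R^N$ by small balls, but the content is the same.

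One bookkeeping slip: with your convention that $\sigma_{II}^{*,n}(h_n)$ carries slack $\eta\cdot 2^{-n}$, the transition from $X_{n-1}$ to $X_n$ uses $\sigma_{II}^{*,n-1}$ with slack $\eta\cdot 2^{-(n-1)}$, so after the factor $d_\epsilon(x)/2\leq 1/2$ the surplus in your displayed inequality is $\eta\cdot 2^{-n}$, not $\eta\cdot 2^{-n-1}$. With the correction series $-\eta\sum_{k=1}^n 2^{-k-1}$ as written, the telescoping then fails by a factor of two. Either halve the initial slack (take $\eta\cdot 2^{-n-1}$ at $h_n$, as the paper does) or replace the series by $-\eta\sum_{k=1}^n 2^{-k}$; with that adjustment the supermartingale inequality and the final bound $\mathbb{E}[F(X_{\tau-1})]\leq u_\epsilon(x_0)+\eta$ go through.
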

\begin{proof}
{\bf 1.} We drop the sub/superscript $\epsilon$ to ease the notation.
To show that $u_{II}\leq u$, fix $x_0\in\R^N$ and
$\eta>0$. We first observe that there exists a strategy
$\sigma_{0,II}$ where $\sigma_{0,II}^n(h_n) = \sigma_{0,II}^n(x_n)$
satisfies for every $n\geq 0$ and $h_n\in H_n$:
\begin{equation}\label{infimize}
f_u(x_n + \epsilon \sigma_{0,II}^n(x_n); x_n, \epsilon)\leq \inf_{z\in
B(0,1)}f_u(x_n+\epsilon z; x_n, \epsilon)+\frac{\eta}{2^{n+1}} 
\end{equation}
Indeed, using the continuity of (\ref{f}), we note that there exists $\delta>0$ such that:
$$\big|\inf_{z\in B(0,1)}f_u(x+\epsilon z; x, \epsilon) -  \inf_{z\in
  B(0,1)}f_u(\bar x+\epsilon z; \bar x, \epsilon) \big| < \frac{\eta}{2^{n+2}} \quad \mbox{for
all }\; |x-\bar x|<\delta.$$
Let $\{B(x_i,\delta)\}_{i=1}^\infty$ be a locally finite covering of
$\R^N$. For each $i=1\ldots \infty$, choose $z_i\in B(0,1)$
satisfying: $|\inf_{z\in B(01)}f_u(x_i+\epsilon z; x_i, \epsilon) -
f_u(x_i+\epsilon z_i; x_i, \epsilon)|<\frac{\eta}{2^{n+2}}$. Finally, define:
$$\sigma^n_{0,II}(x) = z_i \quad \mbox{for }\; x\in B(x_i, \delta)\setminus
\bigcup_{j=1}^{i-1} B(x_j, \delta).$$
The piecewise constant function $\sigma^n_{0,II}$ is obviously Borel
and it satisfies (\ref{infimize}).

\medskip

{\bf 2.} Fix a strategy $\sigma_I$ and consider the following sequence of
random variables $M_n:\Omega\to\mathbb{R}$:
$$M_n=(u\circ X_n)\mathbbm{1}_{\tau>n} + (F\circ
X_{\tau-1})\mathbbm{1}_{\tau\leq n} + \frac{\eta}{2^n}.$$
We show that $\{M_n\}_{n=0}^\infty$ is a supermartingale with respect
to the filtration $\{\mathcal{F}_n\}_{n=0}^\infty$. Clearly:
\begin{equation}\label{decompo}
\begin{split}
\mathbb{E}\big(M_n\mid\mathcal{F}_{n-1}\big) = ~ &
\mathbb{E}\big((u\circ X_n)\mathbbm{1}_{\tau>n}\mid\mathcal{F}_{n-1}\big) + 
\mathbb{E}\big((F\circ X_{n-1})\mathbbm{1}_{\tau=n}\mid\mathcal{F}_{n-1}\big) \\ & + 
\mathbb{E}\big((F\circ X_{\tau -1})\mathbbm{1}_{\tau<n}\mid\mathcal{F}_{n-1}\big) +
\frac{\eta}{2^n}\qquad \mbox{a.s.} 
\end{split}
\end{equation}
We readily observe that: $\mathbb{E}\big((F\circ X_{\tau
  -1})\mathbbm{1}_{\tau<n}\mid\mathcal{F}_{n-1}\big) 
= (F\circ X_{\tau -1})\mathbbm{1}_{\tau<n}$. Further, writing
$\mathbbm{1}_{\tau=n} = \mathbbm{1}_{\tau\geq n} \mathbbm{1}_{t_n>d_\epsilon(x_{n-1})}$, it follows that:
\begin{equation*}
\begin{split}
\mathbb{E}\big((F\circ X_{n-1})&\mathbbm{1}_{\tau=n}\mid\mathcal{F}_{n-1}\big) =
\mathbb{E}\big(\mathbbm{1}_{t_n>d_\epsilon(x_{n-1})}\mid\mathcal{F}_{n-1}\big) \cdot
(F\circ X_{n-1}) \mathbbm{1}_{\tau\geq n} \\ & = \big(1- d_\epsilon(x_{n-1})\big)
(F\circ X_{n-1}) \mathbbm{1}_{\tau\geq n}  \qquad \mbox{a.s.} 
\end{split}
\end{equation*}
Similarly, since $\mathbbm{1}_{\tau>n} = \mathbbm{1}_{\tau\geq n}
\mathbbm{1}_{t_n\leq d_\epsilon(x_{n-1})}$, we get in view of (\ref{infimize}):
\begin{equation*}
\begin{split}
\mathbb{E}&\big((u\circ X_{n})\mathbbm{1}_{\tau>n}\mid\mathcal{F}_{n-1}\big) =
\mathbb{E}\big(u\circ X_n \mid\mathcal{F}_{n-1}\big)\cdot
d_\epsilon(x_{n-1}) \mathbbm{1}_{\tau\geq n} \\ & = \int_{\Omega_1} u\circ
X_n\dd \mathbb{P}_1 \cdot d_\epsilon(x_{n-1}) \mathbbm{1}_{\tau\geq n} 
\\ & = \frac{1}{2}\bigg(\mathcal{A}\Big(u;\gamma_\p\epsilon, 1+(a_\p-1)
|\sigma^{n-1}_I|^2, \frac{\sigma^{n-1}_I}{|\sigma^{n-1}_I|}\Big)(x_{n-1}+\epsilon\sigma_I^{n-1})  \\ &
\qquad \qquad + \mathcal{A}\Big(u;\gamma_\p\epsilon, 1+(a_\p-1)
|\sigma_{0,II}^{n-1}|^2, \frac{\sigma^{n-1}_{0,II}}{|\sigma^{n-1}_{0, II}|}\Big) (x_{n-1}+\epsilon\sigma_{0,II}^{n-1})\bigg)
\cdot d_\epsilon(x_{n-1}) \mathbbm{1}_{\tau\geq n}  \\ & \leq
\big( S\circ X_{n-1} +\frac{\eta}{2^n}\big) d_\epsilon(x_{n-1}) \mathbbm{1}_{\tau\geq n} 
\qquad \mbox{a.s.} 
\end{split}
\end{equation*}

Concluding, by (\ref{DPP2}) the decomposition (\ref{decompo}) yields:
\begin{equation*}
\begin{split}
\mathbb{E}\big(M_n\mid\mathcal{F}_{n-1}\big) & \leq
\Big(d_\epsilon(x_{n-1})\big(S\circ X_{n-1}\big) +
(1-d_\epsilon(x_{n-1}))\big(F\circ X_{n-1}\big)\Big)
\mathbbm{1}_{\tau\geq n} \\ & + (F\circ X_{\tau-1})
\mathbbm{1}_{\tau\leq n-1} + \frac{\eta}{2^{n-1}} = M_{n-1} \qquad \mbox{a.s.} 
\end{split}
\end{equation*}

\medskip

{\bf 3.} The supermartingale property of  $\{M_n\}_{n=0}^\infty$ being
established, we conclude that:
$$u(x_0) +\eta = \mathbb{E}\big[ M_0 \big] \geq \mathbb{E}\big[ M_\tau
\big] = \mathbb{E}\big[ F\circ X_{\tau-1}\big] +\frac{\eta}{2^\tau}. $$
Thus: 
$$u_{II}(x_0) \leq \sup_{\sigma_{I}}\mathbb{E}\big[ F\circ
(X^{\sigma_I, \sigma_{II,0}})_{\tau-1}\big] \leq u(x_0)+\eta.$$
As $\eta>0$ was arbitrary, we obtain the claimed comparison
$u_{II}(x_0)\leq u(x_0)$. For the reverse inequality $ u(x_0)\leq u_{I}(x_0)$,
we use a symmetric argument, with an almost-maximizing strategy
$\sigma_{0,I}$ and the resulting submartingale $\bar M_n=(u\circ X_n)\mathbbm{1}_{\tau>n} + (F\circ
X_{\tau-1})\mathbbm{1}_{\tau\leq n} - \frac{\eta}{2^n}$, along a given
yet arbitrary strategy $\sigma_{II}$. The obvious estimate
$u_{I}(x_0)\leq u_{II}(x_0)$ concludes the proof.
\end{proof}

\section{Convergence of $u_\epsilon$ and game-regularity}\label{sec_convp}

Towards checking convergence of the family $\{u_\epsilon\}_{\epsilon\to 0}$, we
first show that its equicontinuity is  implied by the equicontinuity ``at $\partial\mathcal{D}$''. This
last property will be, in turn, implied by the ``game-regularity''
condition, which in the context of stochastic Tug of War games has been introduced in \cite{PS}.
Below, we present an analytical proof. A probabilistic argument
could be carried out as well, based on a game translation argument.

\medskip

Let $\mathcal{D}\subset\R^N$ be an open, bounded,
connected domain and let $F\in\mathcal{C}(\R^N)$ be a bounded
data function.  We have the following:

\begin{Teo}\label{transfer_p}
Let $\{u_\epsilon\}_{\epsilon\to 0}$ be the family of solutions  to
(\ref{DPP2}). Assume that for every $\eta>0$ there exists $\delta>0$
and $\hat\epsilon\in (0,1)$ such that for all $\epsilon\in (0,\hat\epsilon)$ there holds:
\begin{equation}\label{bd_asp}
|u_\epsilon(y_0)- u_\epsilon(x_0)|\leq \eta \qquad \mbox{for all }\;
y_0\in\mathcal{D}, ~ x_0\in\partial\mathcal{D} \; \mbox{ satisfying
}\; |x_0-y_0|\leq \delta.
\end{equation}
Then the family $\{u_\epsilon\}_{\epsilon\to 0}$ is equicontinuous in $\bar{\mathcal{D}}$.
\end{Teo}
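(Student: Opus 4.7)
The plan is to compare $u_\epsilon$ with its spatial translate $v_\epsilon(\cdot):=u_\epsilon(\cdot+z)$ for $z=y_0-x_0$, and bound $\sup_{\R^N}|u_\epsilon-v_\epsilon|$ by $\eta$; this yields the desired estimate at $x_0$ since $|u_\epsilon(x_0)-u_\epsilon(y_0)|=|u_\epsilon(x_0)-v_\epsilon(x_0)|$. Given $\eta>0$, I would first use the uniform continuity of $F$ on a bounded neighbourhood of $\bar{\mathcal{D}}$ to fix $\delta_F>0$ with $|F(x)-F(x')|\leq \eta/3$ whenever $|x-x'|\leq\delta_F$, and then apply the boundary equicontinuity hypothesis (\ref{bd_asp}) with $\eta/3$ to obtain $\delta_1$ and $\hat\epsilon_1$. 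Setting $\delta:=\tfrac{1}{2}\min\{\delta_1,\delta_F\}$ and $\hat\epsilon:=\min\{\hat\epsilon_1,\delta\}$ ensures that subsequent triangle inequalities close with the right constants.

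By translation-invariance of $S_\epsilon$, the function $v_\epsilon$ satisfies the ``shifted'' DPP
\[
v_\epsilon(x)=d_\epsilon(x+z)\,S_\epsilon v_\epsilon(x)+\big(1-d_\epsilon(x+z)\big)F(x+z).
\]
Let $\phi=u_\epsilon-v_\epsilon$ and $M=\sup_{\R^N}\phi$; a symmetric argument for $-\phi$ bounds the infimum. Off a large compact set $\phi=F-F(\cdot+z)$ is already controlled by $\eta/3$, so if $M>\eta$ then the supremum is attained at some $x^*\in\bar{\mathcal{D}}$. I then split into two cases. \emph{Case (a) --- near the boundary:} either $\mathrm{dist}(x^*,\partial\mathcal{D})\leq\delta_1$ or $\mathrm{dist}(x^*+z,\partial\mathcal{D})\leq\delta_1$; picking a nearest boundary point $\tilde x$, the estimate (\ref{bd_asp}) applied twice plus continuity of $F$ yields $|\phi(x^*)|\leq 2\eta/3<\eta$, a contradiction. \emph{Case (b) --- deep interior:} $d_\epsilon(x^*)=d_\epsilon(x^*+z)=1$, so $u_\epsilon(x^*)=S_\epsilon u_\epsilon(x^*)$ and $v_\epsilon(x^*)=S_\epsilon v_\epsilon(x^*)$; using the elementary inequality $S_\epsilon u-S_\epsilon v\leq \sup_{z'\in\bar B(0,1)}\fint_{E(z';x^*)}(u-v)\,\dd w$ (which follows from the $\frac{1}{2}(\sup+\inf)$ structure of $S_\epsilon$), the chain $M=\phi(x^*)\leq M$ forces equality throughout, so $\phi\equiv M$ on some sampling ellipse $E(z_*';x^*)$.

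The key obstacle is closing the propagation so that it reaches the boundary strip. A direct computation places $x^*$ on the $z_*'$-axis through the centre $x^*+\epsilon z_*'$, at signed distance $\epsilon|z_*'|$; inclusion in the ellipse reduces to $|z_*'|<\gamma_\p\cdot(1+(a_\p-1)|z_*'|^2)$ for every $|z_*'|\leq 1$, which is exactly what the Lemma \ref{facom} condition (\ref{prope2}) guarantees. Hence $\{\phi=M\}$ contains an open $\R^N$-neighbourhood of each of its points that lies in the Case~(b) set $A=\{d_\epsilon(\cdot)=1\}\cap\{d_\epsilon(\cdot+z)=1\}$, so $\{\phi=M\}\cap A$ is clopen in $A$ and therefore a union of connected components of $A$. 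Any nonempty component $C$ contained in $\{\phi=M\}$ is compact with topological boundary in $\partial A$, which is contained in the Case~(a) strip $\{\mathrm{dist}(\cdot,\partial\mathcal{D})\leq\epsilon\}\cup\{\mathrm{dist}(\cdot+z,\partial\mathcal{D})\leq\epsilon\}$. Continuity of $\phi$ then provides a point $\bar x\in \partial A$ with $\phi(\bar x)=M$, and Case~(a) at $\bar x$ forces $M\leq 2\eta/3$, a contradiction. This proves $M\leq\eta$, and the symmetric bound from $-\phi$ yields equicontinuity of $\{u_\epsilon\}_{\epsilon\to 0}$ on $\bar{\mathcal{D}}$.
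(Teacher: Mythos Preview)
Your argument is correct in substance and shares the paper's core idea---compare $u_\epsilon$ with its spatial translate---but the execution differs. The paper does not perform your Case~(b) propagation by hand; instead it observes that both $u_\epsilon$ and the shifted function $\tilde u_\epsilon(x)=u_\epsilon(x-(x_0-y_0))+\eta$ satisfy the DPP on the \emph{subdomain} $\mathcal{D}^{\hat\delta}=\{x\in\mathcal{D}:\mathrm{dist}(x,\R^N\setminus\mathcal{D})>\hat\delta\}$ with weight $d_\epsilon^{\hat\delta}$ (each with its own values as boundary data), and then invokes the monotonicity of the solution operator from Theorem~\ref{thD} directly. This yields a one-line comparison $u_\epsilon\leq\tilde u_\epsilon$ and avoids revisiting the structure of $S_\epsilon$. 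Your route redoes the propagation explicitly and, in doing so, correctly isolates the role of condition~(\ref{prope2}): it is precisely what guarantees that the maximizing sampling ellipse $E(z_*';x^*)$ contains $x^*$ in its interior, so that $\{\phi=M\}$ is open on $A$. This is a genuine observation that the paper's modular argument hides inside the uniqueness step of Theorem~\ref{thD}.

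Two points of care. First, your Case~(a) threshold $\delta_1$ is too generous as written: to apply (\ref{bd_asp}) to the pair $(x^*+z,\tilde x)$ you need $|x^*+z-\tilde x|\leq\delta_1$, but allowing $|x^*-\tilde x|$ up to $\delta_1$ together with $|z|\leq\delta=\tfrac12\delta_1$ gives only $\tfrac32\delta_1$. This is harmless, since in both places you actually use Case~(a)---at $\bar x\in\partial A$ and at a maximizer $x^*\notin A$---the relevant boundary distance is at most $\epsilon<\delta\leq\tfrac12\delta_1$, so the triangle inequality does close; the dichotomy should simply be stated with threshold $\epsilon$ rather than $\delta_1$. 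Second, the passage from ``$\{\phi=M\}\cap A$ is clopen in $A$'' to ``some $\bar x\in\partial A$ has $\phi(\bar x)=M$'' deserves one more line: the component $C_0$ of $A$ containing $x^*$ is closed (hence $\bar x\in C_0\subset\{\phi=M\}$), bounded, and any ball in $\mathrm{int}(A)$ meeting $C_0$ lies in $C_0$ by maximality, so $\partial C_0\subset\partial A$ is nonempty.
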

\begin{proof}
For every small $\hat\delta>0$, define the open, bounded, connected
set $\mathcal{D}^{\hat\delta}$ and the distance:
\begin{equation*}
\mathcal{D}^{\hat\delta} = \big\{q\in\mathcal{D}; ~ \mbox{dist}(q,
\mathbb{R}^N\setminus \mathcal{D})>{\hat\delta}\big\} \quad\mbox{ and
}\quad d^{\hat\delta}_\epsilon(q) = \frac{1}{\epsilon}\min\{\epsilon,
\mbox{dist}(q,\R^N\setminus \mathcal{D}^{\hat\delta})\}.
\end{equation*}
Fix $\eta>0$. In view of (\ref{bd_asp}) and since without loss
of generality the data function $F$ is constant outside of some large bounded
superset of $\mathcal{D}$ in $\R^N$, there exists $\hat\delta>0$  satisfying:
\begin{equation}\label{bd_asp2}
|u_\epsilon(x+z)- u_\epsilon(x)|\leq \eta \qquad \mbox{for all }\;
x\in\R^N\setminus \mathcal{D}^{\hat\delta}, ~~|w|\leq\hat\delta, ~~
\epsilon\in (0,\hat\epsilon).
\end{equation}
Fix $x_0, y_0\in \bar{\mathcal{D}}$ with $|x_0-y_0|\leq\frac{\hat\delta}{2}$ 
and let $\epsilon\in (0, \frac{\hat\delta}{2})$. Consider the
following function $\tilde u_\epsilon\in\mathcal{C}(\R^N)$: 
$$\tilde u_\epsilon(x) = u_\epsilon(x-(x_0-y_0)) + \eta.$$
Then, by (\ref{DPP2}) and recalling the definition of the principal
averaging operator $S_\epsilon$, we get:
\begin{equation}\label{bd_bd}
\begin{split}
(S_\epsilon \tilde u_\epsilon)(x) = (S_\epsilon u_\epsilon)(x-(x_0-y_0))+\eta =
u_\epsilon(x-(x_0-y_0)) + \eta = \tilde u_\epsilon(x) \quad
\mbox{for all }\; x\in\mathcal{D}^{\hat\delta}.
\end{split}
\end{equation}
because in $\mathcal{D}^{\hat\delta}$ there holds:
$$ \mbox{dist}(x-(x_0-y_0), \R^N\setminus \mathcal{D})\geq \mbox{dist}(x,
\R^N\setminus \mathcal{D}) - |x_0-y_0|\geq 
{\hat\delta}- \frac{\hat\delta}{2}= \frac{\hat\delta}{2}>\epsilon.$$

It follows now from (\ref{bd_bd}) that:
\begin{equation*}
\begin{split}
\tilde u_\epsilon= d_\epsilon^{\hat\delta}  (S_\epsilon \tilde u_\epsilon)
+ \big(1 -d_\epsilon^{\hat\delta}\big) \tilde u_\epsilon\qquad  \mbox{in }\; \R^N.
\end{split}
\end{equation*}
On the other hand, $u_\epsilon$ itself similarly solves the same
problem above, subject to its own data $u_\epsilon$ on $\R^N\setminus
\mathcal{D}^{\hat\delta}$. Since for every $x\in\R^N\setminus
\mathcal{D}^{\hat\delta}$ we have: $\tilde u_\epsilon(x) - u_\epsilon(x)
= u_\epsilon(x-(x_0-y_0))-u_\epsilon(x) + \eta \geq 0$ in view of
(\ref{bd_asp2}), the monotonicity property in
Theorem \ref{thD} yields:
$$u_\epsilon\leq \tilde u_\epsilon \qquad\mbox{in }\; \R^N.$$
Thus, in particular: $u_\epsilon(x_0)-u_\epsilon(y_0)\leq
\eta$. Exchanging $x_0$ with $y_0$ we get the opposite inequality,
and hence $|u_\epsilon(x_0)-u_\epsilon(y_0)|\leq\eta$,
establishing the claimed equicontinuity of
$\{u_\epsilon\}_{\epsilon\to 0}$ in $\bar{\mathcal{D}}$.
\end{proof}

\medskip

Following \cite{PS}, we say that a point $x_0\in\partial\mathcal{D}$ is
{game-regular} if, whenever the game starts near $x_0$,
one of the ``players'' has a strategy for making the game terminate
still near $x_0$, with high probability. More precisely:

\begin{Def}\label{def_gamereg}
Consider the Tug of War game with noise in (\ref{processMp}) and (\ref{ue_def_p}).
\begin{itemize}
\item[(a)] We say that a point $x_0\in\partial\mathcal{D}$ is {\em game-regular} if for
every $\eta, \delta>0$ there exist $\hat\delta\in (0, \delta)$ and $\hat\epsilon\in
(0, 1)$ such that the following holds. Fix
$\epsilon\in (0, \hat\epsilon)$ and $x\in B(x_0, {\hat\delta})$; there exists then a strategy
$\sigma_{0, I}$ with the property that for every strategy $\sigma_{II}$ we have:
\begin{equation}\label{gar}
\mathbb{P}\big( (X^{\epsilon, x, \sigma_{0,I}, \sigma_{II}})_{\tau-1}\in B(x_0, \delta )\big) \geq 1-\eta.
\end{equation}

\item[(b)] We say that  {$\mathcal{D}$ is game-regular} if every
  boundary point $x_0\in\partial\mathcal{D}$ is game-regular.
\end{itemize}
\end{Def}

\begin{Rem}\label{usp}
If condition (b) holds, then $\hat\delta$ and $\hat\epsilon$ in part (a) can be chosen
independently of $x_0$. Also, game-regularity is symmetric with
respect to $\sigma_I$ and $\sigma_{II}$.
\end{Rem}

\medskip

\begin{Lemma}\label{thm_gamenotreg_noconv}
Assume that for every bounded data $F\in\mathcal{C}(\R^N)$, the family of solutions
$\{u_\epsilon\}_{\epsilon\to 0}$ of (\ref{DPP2})  is equicontinuous in
$\bar{\mathcal{D}}$. Then $\mathcal{D}$ is game-regular. 
\end{Lemma}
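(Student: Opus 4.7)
The plan is to argue the contrapositive: assuming $\mathcal{D}$ fails to be game-regular at some $x_0\in\partial\mathcal{D}$, I will construct a bounded continuous datum $F$ for which $\{u_\epsilon\}_{\epsilon\to 0}$ is not equicontinuous in $\bar{\mathcal{D}}$, contradicting the hypothesis.

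First I would negate Definition \ref{def_gamereg}(a) at $x_0$ and invoke the symmetric formulation from Remark \ref{usp} (with the roles of $\sigma_I$ and $\sigma_{II}$ interchanged). This produces constants $\eta_0,\delta_0>0$ such that for every $\hat\delta,\hat\epsilon>0$ there exist $\epsilon\in(0,\hat\epsilon)$ and $x\in B(x_0,\hat\delta)\cap\bar{\mathcal{D}}$ with the property that for \emph{every} strategy $\sigma_{II}$ there is a responding $\sigma_I$ satisfying
\begin{equation*}
\mathbb{P}\bigl((X^{\epsilon,x,\sigma_I,\sigma_{II}})_{\tau-1}\notin B(x_0,\delta_0)\bigr)>\eta_0.
\end{equation*}
Choosing $\hat\delta_n=\hat\epsilon_n=1/n$, I would extract sequences $\epsilon_n\to 0$ and $x_n\to x_0$ along which this probability estimate persists.

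Next I would fix a continuous function $F:\R^N\to[0,1]$ with $F\equiv 0$ on $\bar B(x_0,\delta_0/2)$ and $F\equiv 1$ on $\R^N\setminus B(x_0,\delta_0)$, for instance $F(y)=\min\bigl(1,\max\bigl(0,\tfrac{2}{\delta_0}(|y-x_0|-\delta_0/2)\bigr)\bigr)$. For each $n$ and each $\sigma_{II}$, pairing with the $\sigma_I$ from Step~1 and using that $F\equiv 1$ on the event $\{X_{\tau-1}\notin B(x_0,\delta_0)\}$ while $F\geq 0$ on its complement gives $\mathbb{E}[F\circ X_{\tau-1}]>\eta_0$. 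Hence $\sup_{\sigma_I}\mathbb{E}[F\circ X_{\tau-1}]>\eta_0$ for every $\sigma_{II}$, so by (\ref{ue_def_p}) and Theorem \ref{are_equal},
\begin{equation*}
u_{\epsilon_n}(x_n)=u_{II}^{\epsilon_n}(x_n)\geq \eta_0.
\end{equation*}

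Finally, since $x_0\in\partial\mathcal{D}$ yields $d_{\epsilon_n}(x_0)=0$, the dynamic programming principle (\ref{DPP2}) forces $u_{\epsilon_n}(x_0)=F(x_0)=0$. Together with $x_n\to x_0$, this contradicts the assumed equicontinuity of $\{u_\epsilon\}_{\epsilon\to 0}$. The main obstacle, and essentially the only subtle point, is Step~1: carefully negating the quantifiers in Definition \ref{def_gamereg} and crucially using the symmetry from Remark \ref{usp}, so that it is Player~I (rather than Player~II) who has a counter-strategy pushing the token out of $B(x_0,\delta_0)$ against \emph{every} $\sigma_{II}$; this is exactly the inequality needed to bound $u_{II}^{\epsilon_n}$ from below. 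The rest of the argument reduces to the right choice of the test datum $F$ and direct inspection of (\ref{DPP2}) and (\ref{ue_def_p}).
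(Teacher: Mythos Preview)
Your argument is correct and rests on the same idea as the paper's proof: pick a distance-type test datum centered at $x_0$, then convert the game-value representation of $u_\epsilon$ into a probability bound on $\{X_{\tau-1}\notin B(x_0,\delta_0)\}$ via Markov's inequality. The paper argues directly rather than by contrapositive: it fixes $\eta,\delta$, takes $F(x)=-\min\{1,|x-x_0|\}$, uses equicontinuity to get $u_I^\epsilon(x)>-\eta\delta$ near $x_0$, and then extracts a near-optimal $\sigma_{0,I}$ for which $\mathbb{P}(X_{\tau-1}\notin B(x_0,\delta))\leq -\frac{1}{\delta}\mathbb{E}[F\circ X_{\tau-1}]<\eta$, verifying (\ref{gar}) outright. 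Because the paper works with $u_I^\epsilon$ and the original ordering of players, it does not need to invoke the symmetry in Remark \ref{usp}; your detour through the $\sigma_{II}$-version is harmless but unnecessary (you could equally have taken $F\leq 0$ and bounded $u_I^\epsilon$ from above). The direct proof is slightly cleaner in that it produces the witnessing strategy explicitly, while your contrapositive packaging makes the role of equicontinuity more transparent; mathematically the two are equivalent.
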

\begin{proof}
Fix $x_0\in\partial\mathcal{D}$ and let $\eta, \delta\in (0,1)$. Define the
data function:  $F(x) = -\min\big\{1, |x-x_0|\big\}.$
By assumption and since $u_\epsilon(x_0)=F(x_0)=0$, there exists $\hat\delta\in
(0,\delta)$ and $\hat\epsilon\in (0,1)$ such that:
$$ |u_\epsilon (x)|< \eta\delta \;\;\;\mbox{ for all } \; x\in
B(x_0,{\hat\delta})  \;\mbox{ and } \; \epsilon\in (0,\hat\epsilon).$$
Consequently:
$$\sup_{\sigma_I}\inf_{\sigma_{II}}\mathbb{E}\big[F\circ (X^{\epsilon, x})_{\tau-1}\big]
= u_I^\epsilon(x) > -\eta\delta,$$
and thus there exists  $\sigma_{0,I}$ with the property
that: $\mathbb{E}\big[F\circ (X^{\epsilon, x, \sigma_{0,I},
  \sigma_{II}})_{\tau-1}\big]>-\eta\delta$ for every strategy $\sigma_{II}$. Then:
$$\mathbb{P}\big(X_{\tau-1}\not\in B(x_0, \delta)\big) \leq
-\frac{1}{\delta}\int_\Omega F(X_{\tau-1})\dd\mathbb{P}<\eta,$$
proving (\ref{gar}) and hence game-regularity of $x_0$.
\end{proof}

\begin{Teo}\label{thm_gamereg_conv}
Assume that $\mathcal{D}$ is game-regular. Then, for every bounded data
$F\in\mathcal{C}(\R^N)$, the family $\{u_\epsilon\}_{\epsilon\to 0}$ of solutions to (\ref{DPP2})
is equicontinuous in  $\bar{\mathcal{D}}$.
\end{Teo}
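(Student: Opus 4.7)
The plan is to reduce the claim, via Theorem~\ref{transfer_p}, to the boundary equicontinuity (\ref{bd_asp}), and then to verify (\ref{bd_asp}) by exploiting the identification $u_\epsilon=u_I^\epsilon=u_{II}^\epsilon$ from Theorem~\ref{are_equal} together with game-regularity. Fix $\eta>0$. Since $F$ is uniformly continuous on any bounded superset of $\mathcal{D}$, pick $\delta_0\in(0,1)$ with $|F(z)-F(x_0)|\leq \eta/4$ for all $x_0\in\partial\mathcal{D}$ and $|z-x_0|\leq\delta_0$. Then choose $\eta_0\in(0,1)$ so small that $(2\|F\|_{\mathcal{C}(\R^N)}+1)\eta_0\leq \eta/4$. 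Game-regularity of $\mathcal{D}$, combined with Remark~\ref{usp}, furnishes $\hat\delta\in(0,\delta_0)$ and $\hat\epsilon\in(0,1)$ independent of $x_0\in\partial\mathcal{D}$ and valid for both players.

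Now fix $\epsilon\in(0,\hat\epsilon)$, $x_0\in\partial\mathcal{D}$ and $y_0\in\mathcal{D}$ with $|y_0-x_0|\leq\hat\delta$. By game-regularity there is a strategy $\sigma_{0,I}$ (starting at $y_0$) such that for every $\sigma_{II}$ the stopping position $(X^{\epsilon,y_0,\sigma_{0,I},\sigma_{II}})_{\tau-1}$ lies in $B(x_0,\delta_0)$ with probability at least $1-\eta_0$. Splitting the expectation over this good event and its complement yields
$$
\mathbb{E}\big[F\circ (X^{\epsilon,y_0,\sigma_{0,I},\sigma_{II}})_{\tau-1}\big]\geq \big(F(x_0)-\tfrac{\eta}{4}\big)(1-\eta_0)-\|F\|_{\mathcal{C}(\R^N)}\eta_0\geq F(x_0)-\tfrac{\eta}{2},
$$
and hence $u_\epsilon(y_0)=u_I^\epsilon(y_0)\geq F(x_0)-\eta/2$. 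The symmetric form of game-regularity, noted in Remark~\ref{usp}, produces an analogous almost-minimizing strategy $\sigma_{0,II}$ which, combined with $u_\epsilon=u_{II}^\epsilon$, gives $u_\epsilon(y_0)\leq F(x_0)+\eta/2$. Since $d_\epsilon(x_0)=0$, formula (\ref{DPP2}) gives $u_\epsilon(x_0)=F(x_0)$, whence $|u_\epsilon(y_0)-u_\epsilon(x_0)|\leq \eta/2<\eta$. This establishes (\ref{bd_asp}), and Theorem~\ref{transfer_p} concludes the proof.

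The only delicate point is securing the symmetric ``Player II'' form of game-regularity used for the upper bound on $u_\epsilon(y_0)$. This is the content of Remark~\ref{usp} and is immediate from the structure of (\ref{processMp}): the coin outcomes $s_n$ are equidistributed on $\{1,2\}$, so the definition is insensitive to swapping the roles of $\sigma_I$ and $\sigma_{II}$. With that in hand the argument is essentially a read-off of the game value against the boundary data on the ``good'' event where the token exits near $x_0$, versus a trivial $L^\infty$ bound on the small ``bad'' event of mass at most~$\eta_0$.
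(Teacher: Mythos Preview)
Your proof is correct and follows essentially the same approach as the paper: reduce to (\ref{bd_asp}) via Theorem~\ref{transfer_p}, invoke game-regularity to produce a strategy forcing the stopping position into $B(x_0,\delta_0)$ with high probability, split the expectation over the good and bad events, and use the identification $u_\epsilon=u_I^\epsilon=u_{II}^\epsilon$ together with the symmetry in Remark~\ref{usp} for the reverse bound. One cosmetic point: the intermediate step $\geq \big(F(x_0)-\tfrac{\eta}{4}\big)(1-\eta_0)-\|F\|_{\mathcal{C}(\R^N)}\eta_0$ is not literally valid when $F(x_0)-\tfrac{\eta}{4}<0$ (since then a larger $\mathbb{P}(\mathrm{good})$ makes the product smaller, not larger); the clean fix is to subtract $F(x_0)$ first and bound $\mathbb{E}[F\circ X_{\tau-1}-F(x_0)]\geq -\tfrac{\eta}{4}-2\|F\|_{\mathcal{C}(\R^N)}\eta_0$, which gives your final inequality $\geq F(x_0)-\tfrac{\eta}{2}$ without case analysis.
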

\begin{proof}
In virtue of Theorem \ref{transfer_p} it is enough to validate the condition
(\ref{bd_asp}). To this end, fix $\eta>0$ and let $\delta>0$ be such that:
\begin{equation}\label{ga}
|F(x)-F(x_0)|\leq \frac{\eta}{3} \;\;\mbox{ for all }
\;\;x_0\in\partial\mathcal{D} \; \mbox{ and } \; x\in B(x_0,\delta).
\end{equation}
By Remark \ref{usp} and Definition \ref{def_gamereg}, we may choose
$\hat\delta\in (0,\delta)$ and $\hat\epsilon\in (0, \delta)$ such that
for every $\epsilon\in (0, \hat\epsilon)$, every
$x_0\in\partial\mathcal{D}$ and every $x\in B(x_0,{\hat\delta})$,
there exists a strategy $\sigma_{0,II}$ with the property that for
every $\sigma_I$ there holds:
\begin{equation}\label{ga1}
\mathbb{P}\big((X^{\epsilon, x, \sigma_I,
\sigma_{0,II}})_{\tau-1}\in B(x_0,\delta )\big) \geq 1 - \frac{\eta}{6\|F\|_{\mathcal{C}(\R^N)+1}}.
\end{equation}
Let $x_0\in\partial\mathcal{D}$ and $y_0\in\mathcal{D}$ satisfy
$|x_0-y_0|\leq\hat\delta$. Then:  
\begin{equation*}
\begin{split}
u_\epsilon(y_0) - u_\epsilon(x_0) & =u_{II}^\epsilon(y_0)-F(x_0)\leq\sup_{\sigma_I}\mathbb{E}
\big[F\circ (X^{\epsilon, y_0, \sigma_I, \sigma_{0,II}})_{\tau-1} -
F(x_0)\big]\\ & \leq \mathbb{E}\big[F\circ (X^{\epsilon, y_0, \sigma_{0,I},
\sigma_{0,II}})_{\tau-1} - F(x_0)\big] + \frac{\eta}{3},
\end{split}
\end{equation*}
for some almost-supremizing strategy $\sigma_{0,I}$. Thus, by (\ref{ga}) and (\ref{ga1}): 
\begin{equation*}
\begin{split}
u_\epsilon(y_0) - u_\epsilon(x_0) & \leq \int_{\{X_{\tau-1} \in
B(x_0,\delta)\}} |F(X_{\tau-1}) - F(x_0)|\dd\mathbb{P} \\ & 
\qquad\qquad + \int_{\{X_{\tau-1} \not\in
B(x_0,\delta)\}} |F(X_{\tau-1}) - F(x_0)|\dd\mathbb{P} +
\frac{\eta}{3}\\ & \leq \frac{\eta}{3} +
2\|F\|_{\mathcal{C}(\R^N)}\mathbb{P}\big(X_{\tau-1}\not\in B(x_0, \delta)\big)+\frac{\eta}{3}\leq\eta.
\end{split}
\end{equation*}
The remaining inequality $u_\epsilon(y_0) - u_\epsilon(x_0)>-\eta$ is
obtained by a reverse argument.
\end{proof}

\begin{Rem}
We expect the family $\{u_\epsilon\}_{\epsilon\to 0}$ always to converge pointwise
(regardless of the regularity of $\D$), to the limit function $u$
that coincides with Perron's solution of the boundary value problem:
$\Delta_pu=0$ in $\D$, $u=F$ on $\partial\D$. Condition (\ref{gar}),
implying the uniform convergence 
and the resulting attainment of the boundary values $F$ by $u$, is expected to be
equivalent with the Wiener regularity criterion \cite{4}. These
assertions may be proved directly in the harmonic case $\p=2$, and
they will be the subject of future work in the nonlinear setting  $\p\neq 2$.
\end{Rem}

\section{The exterior corkscrew condition is sufficient for game-regularity} \label{corkpH} 

\begin{Def}\label{cork_def}
We say that a given boundary point $x_0\in \partial\mathcal{D}$
satisfies the {\em exterior corkscrew condition} provided
that there exists $\mu\in (0,1)$ such that for all sufficiently small
$r>0$ there exists a ball $B(x, {\mu r})$ such that:
$$B(x,{\mu r})\subset B(x_0,r)\setminus \bar{\mathcal{D}}.$$
\end{Def}

\smallskip

The main result of this section is:
\begin{Teo}\label{corkthengamereg}
If $x_0\in\partial\mathcal{D}$ satisfies the exterior corkscrew
condition, then $x_0$ is game-regular.
\end{Teo}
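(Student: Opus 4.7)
\noindent\emph{Proof plan.} The approach is to build, by concatenating ``pushing'' strategies across dyadic annular shells around $x_0$, a strategy under which the game terminates with high probability before the token escapes $B(x_0,\delta)$. By the symmetry in Remark~\ref{usp} it is enough to construct such a strategy for Player~II. Fix $\eta,\delta>0$, pick a ratio $\lambda\in(0,1)$ small in terms of $\mu$, set $r_k=\lambda^k\delta$, and, using the corkscrew condition at every sufficiently large scale $k$, let $B(y_k,\mu r_k)\subset B(x_0,r_k)\setminus\bar{\mathcal D}$ be the corkscrew ball at scale $k$. Define $\sigma^{(k)}_{0,II}(x)$ to be the unit-ball shift pushing $x\in A_k := B(x_0,r_{k-1})\setminus\bar B(x_0,r_k)$ toward $y_k$, and paste these together into a single global strategy $\sigma_{0,II}$ that uses $\sigma^{(k)}_{0,II}$ whenever the token lies in the shell $A_k$.

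\medskip

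\noindent The technical core is a single-shell estimate: there exist constants $q=q(N,\p,\mu)\in(0,1)$ and $c_0=c_0(N,\p,\mu)>0$ such that for every $k$, every $\epsilon\in(0,c_0 r_k)$, every starting point $x\in A_k$, and every opposing strategy $\sigma_I$,
\[
\mathbb{P}\big( X^{\epsilon,x,\sigma_I,\sigma_{0,II}} \text{ exits } A_k \text{ through } \partial B(x_0,r_{k-1})\big)\leq q.
\]
To prove this I would test against a radial barrier $\psi$ of the form $C_1-C_2 |\,\cdot\,-y_k|^{(\p-N)/(\p-1)}$ (with the logarithmic analogue when $\p=N$), normalized so that $\psi\leq 0$ on $\partial B(y_k,\mu r_k)$ and $\psi\geq 1$ on $\partial B(x_0,r_{k-1})$; this $\psi$ is classically $\p$-harmonic away from $y_k$, and an arbitrarily small perturbation makes it strictly $\p$-superharmonic on the annular region surrounding $y_k$. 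Since the $\sigma_{0,II}$-push is the direction minimizing $\psi$ to first order and the ellipsoidal averages collapse the first-order drift in the $\frac{1}{2}(\inf+\sup)$ combination, Theorem~\ref{th_expansion} applied with the specific aspect ratios $(\gamma_\p,a_\p)$ gives, for every $\sigma_I\in B(0,1)$,
\[
\frac{1}{2}\big(f_\psi(x+\epsilon\sigma_I;x,\epsilon)+f_\psi(x+\epsilon\sigma_{0,II}(x);x,\epsilon)\big)\leq \psi(x)-c\,\epsilon^2 r_k^{-2},
\]
uniformly on $A_k$ and in $k$ (by the scaling invariance $z\mapsto r_k z$ of the test function). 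Hence $\psi\circ X_n$ is an approximate supermartingale under $\sigma_{0,II}$; optional stopping at the first exit time from $A_k$, combined with the explicit disparity between the values of $\psi$ on the inner and outer shell boundaries, forces $q<1$ independently of $k$.

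\medskip

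\noindent Given the one-shell bound, the concatenation is immediate: choose $M$ with $q^M<\eta$ and set $\hat\delta=\lambda^M\delta$, $\hat\epsilon=c_0 r_M$. For every starting position $x\in B(x_0,\hat\delta)$, the process must cross at least $M$ outward shell boundaries before reaching $\partial B(x_0,\delta)$, and each such crossing fails (i.e., does not end in termination or passage to a smaller shell) with probability at most $q$, so the probability of not terminating inside $B(x_0,\delta)$ is at most $q^M<\eta$, which is precisely \eqref{gar}. The chief obstacle is the uniform control across all scales $k$ of the cumulative $o(\epsilon^2)$ error in the mean-value expansion: the expected number of steps needed to cross a single shell is $O((r_k/\epsilon)^2)$, so the per-step discrepancies sum to $O(1)$ and must be strictly absorbed into the supermartingale margin $c\epsilon^2 r_k^{-2}$; the scaling invariance of $\psi$ under $z\mapsto r_k z$ together with the uniform lower bound $|z-y_k|\geq \mu r_k$ on $A_k$ (which keeps $\|\nabla^2\psi\|$ bounded independently of $k$, so that the coefficient of the $o(r^2)$ remainder in Theorem~\ref{th_expansion} is uniform) is what makes this possible. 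This is the Peres--Scheffield annulus walk scheme transposed to our $N$-dimensional ellipsoidal sampling; the paper promises a careful exposition of a known argument rather than the introduction of genuinely new ingredients.
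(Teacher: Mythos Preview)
Your proposal is correct and follows the same underlying scheme as the paper: a radial $\p$-harmonic barrier centered at a corkscrew ball, a supermartingale/optional-stopping bound on the probability of outward escape, and a concatenation over nested scales to drive the escape probability below $\eta$. The paper packages these steps differently. Rather than working with dyadic shells and scale-dependent corkscrew centers $y_k$, it first isolates an abstract concatenation lemma (Theorem~\ref{th_concat}): if for \emph{every} $\delta$ there exist $\hat\delta,\hat\epsilon$ and a strategy keeping the token inside $B(x_0,\delta)$ with probability at least $1-\theta_0$ for one fixed $\theta_0<1$, then $x_0$ is game-regular. For the single-scale input it then invokes a standalone annulus-walk estimate (Theorem~\ref{annulus}) on an annulus centered at a \emph{single} corkscrew point $y_0$, proved not by a direct supermartingale computation from Theorem~\ref{th_expansion} but by appealing to the already-established first convergence result (Theorem~\ref{conv_nondegene}) for the explicit $\p$-harmonic data $v(|x|)$ on the annular domain. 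This is exactly where your ``chief obstacle'' (uniform control of the cumulative $o(\epsilon^2)$ error) is absorbed, via the careful $|x|^s$ perturbation in that proof. Your hands-on route is valid, but the paper's modularization avoids repeating the error-control argument and gives reusable intermediate statements; conversely, your version makes the geometric picture (outward crossings of $M$ shells) more explicit.
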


Towards the proof, we first recall a useful result on concatenating strategies, which
proposes a condition equivalent to the game-regularity criterion in
Definition \ref{def_gamereg} (a).  This result has been proved with little
detail in \cite{PS}, we thus reprove it for the convenience of the
reader.  Let $\mathcal{D}\subset\heis$ be an open, bounded,
connected domain.

\begin{Teo}\label{th_concat}
For a given $x_0\in\partial\mathcal{D}$, assume that there
exists $\theta_0\in (0,1)$ such that for every $\delta>0$ there exists
$\hat\delta\in (0, \delta)$ and  $\hat\epsilon\in (0,1)$ with the following property. Fix
$\epsilon\in (0, \hat\epsilon)$ and choose an initial position
$x_0\in B(x_0,{\hat\delta})$; there exists a strategy $\sigma_{0,II}$ such that for every $\sigma_I$ we have:
\begin{equation}\label{name33}
\mathbb{P}\big(\exists n< \tau \quad X_n\not\in B(x_0,\delta)\big)\leq\theta_0.
\end{equation}
Then $x_0$ is game-regular.
\end{Teo}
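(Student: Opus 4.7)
\emph{Proof strategy.} The plan is to iterate the hypothesis so as to produce a nested family of balls $B(x_0, \delta_0) \supset B(x_0, \delta_1) \supset \ldots \supset B(x_0, \delta_K)$ centered at $x_0$, each equipped with its own ``stay-inside'' strategy for player II, and then to concatenate these strategies via the strong Markov property. We set $\delta_0 = \delta$ and, applying the hypothesis successively with $\delta$ in its statement replaced by $\delta_k$, produce radii $\delta_{k+1} \in (0, \delta_k)$ and scales $\hat\epsilon_{k+1} \in (0,1)$ such that for every $\epsilon < \hat\epsilon_{k+1}$ and every initial position $x \in B(x_0, \delta_{k+1})$ there exists a strategy $\bar\sigma_k=\bar\sigma_k(x)$ with
\begin{equation*}
\mathbb{P}\big(\exists\, n < \tau :\, X_n \notin B(x_0, \delta_k)\big) \leq \theta_0.
\end{equation*}
The integer $K$, together with the final values of $\hat\delta$ and $\hat\epsilon$, will be selected at the end of the argument.

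Next I would define the concatenated strategy $\sigma_{0,II}$ adaptively. Starting from $x \in B(x_0, \delta_K)$, play $\bar\sigma_{K-1}$ until the first time $T_1 < \tau$ at which $X_n$ exits $B(x_0, \delta_{K-1})$. Since one step of the process (\ref{processMp}) displaces the token by at most $C_\p\epsilon$ for a constant $C_\p$ depending only on $\p, \gamma_\p, a_\p$, choosing $\epsilon$ small enough so that $C_\p\epsilon\leq \delta_{k-1}-\delta_k$ for every relevant $k$ guarantees that $X_{T_1}\in B(x_0, \delta_{K-2})$. At $T_1$ we switch to $\bar\sigma_{K-3}$ applied to the new starting position $X_{T_1}$; inductively, after the $j$-th exit $T_j$ from $B(x_0, \delta_{K-2j+1})$ one has $X_{T_j}\in B(x_0, \delta_{K-2j})$, and we switch to $\bar\sigma_{K-2j-1}$. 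Measurability of the resulting piecewise definition of $\sigma_{0,II}$ can be arranged in the same spirit as in Step~1 of the proof of Theorem \ref{are_equal}.

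To estimate the unwanted event $\{\exists\, n < \tau :\, X_n \notin B(x_0, \delta)\}$, I would observe that it coincides with the occurrence of all successive exits $T_1, T_2, \ldots, T_M$ before $\tau$, where $M=\lfloor (K+1)/2\rfloor$ equals the number of levels traversed. Conditionally on the history up to $T_j$, the hypothesis applied at the position $X_{T_j}\in B(x_0, \delta_{K-2j})$ bounds the probability of the next exit by $\theta_0$, and the strong Markov property then compounds these bounds multiplicatively. Hence the escape probability does not exceed $\theta_0^M$. Given $\eta>0$, fixing an odd $K$ with $\theta_0^M\leq \eta$, setting $\hat\delta=\delta_K$ and $\hat\epsilon<\min_k\hat\epsilon_k$ together with the size constraint $\hat\epsilon\leq \min_k (\delta_{k-1}-\delta_k)/C_\p$, yields the bound (\ref{gar}) for $\sigma_{0,II}$ in place of $\sigma_{0,I}$. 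Game-regularity of $x_0$ then follows from the symmetry noted in Remark \ref{usp}.

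The main obstacle in this plan is the bookkeeping of the adaptive restarts: since each $\bar\sigma_k$ depends on its own initial position, the concatenated strategy is not a fixed global object but must be defined via the random switching times $T_j$, and one has to invoke the strong Markov property carefully at each $T_j$ to compound the per-level escape bounds. The uniformity of $\hat\epsilon_k$ across starting points in $B(x_0, \delta_{k+1})$ furnished by the hypothesis, together with the small-$\epsilon$ overshoot control, is precisely what keeps the recursion intact by preventing any single exit from leapfrogging the intended landing ball $B(x_0, \delta_{K-2j})$.
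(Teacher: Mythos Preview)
Your approach is correct and essentially the same as the paper's: both construct a nested family of balls around $x_0$, concatenate the per-level strategies by restarting after each exit, and compound the exit probabilities multiplicatively to obtain $\theta_0^m\leq\eta$. The paper handles the overshoot by building a $(1+\gamma_\p)\hat\epsilon$-cushion directly into the recursive definition of $\delta_{k-1}$ (so only one level is consumed per switch rather than your two), and makes the ``strong Markov'' step rigorous via an explicit Fubini/independence computation on the product space (Lemma~\ref{Fubini}).
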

\begin{proof}
{\bf 1.} Under condition (\ref{name33}), construction of an optimal strategy realising the
(arbitrarily small) threshold $\eta$ in (\ref{gar}) is carried out by
concatenating the $m$ optimal strategies corresponding to the
achievable threshold $\eta_0$,
on $m$ concentric balls, where $(1-\eta_0)^m= 1-\theta_0^m\geq 1-\eta$.

Fix $\eta,\delta>0$. We want to find $\hat\epsilon$ and
$\hat\delta$ such that (\ref{gar}) holds.
Observe first that for $\theta_0\leq \eta$ the claim
follows directly from (\ref{name33}). In the general case, let $m\in \{2,3,\ldots\}$ be such that:
\begin{equation}\label{power}
\theta_0^m \leq \eta.
\end{equation}
Below we inductively define the radii $\{\delta_k\}_{k=1}^m$, together with the quantities
$\{\hat\delta(\delta_k)\}_{k=1}^m$, $\{\hat\epsilon(\delta_k)\}_{k=1}^m$ 
from the assumed condition (\ref{name33}). Namely,
for every initial position in $B(x_0, \hat\delta(\delta_k))$
in the Tug of War game with step less than
$\hat\epsilon(\delta_k)$, there exists a strategy $\sigma_{0, II, k}$
guaranteeing exiting $B(x_0,{\delta_k})$ (before the process
is stopped) with probability at most $\theta_0$. 
We set $\delta_m=\delta$ and find $\hat\delta(\delta_m)\in (0,\delta)$ and
$\hat\epsilon(\delta_m)\in (0,1)$, with the
indicated choice of the strategy $\sigma_{0,II,m}$. Decreasing the
value of $\hat\epsilon(\delta_m)$ if necessary, we then set:
$$ \delta_{m-1}=\hat\delta(\delta_m) - (1+\gamma_\p)\hat\epsilon(\delta_m)>0.$$
Similarly, having constructed $\delta_{k}>0$, we find
$\hat\delta(\delta_{k})\in (0,\delta_k)$ and
$\hat\epsilon(\delta_{k})\in (0, \hat\epsilon(\delta_{k+1}))$ and define: 
$$\delta_{k-1}=\hat\delta(\delta_{k}) - (1+\gamma_\p) \hat\epsilon(\delta_{k})>0.$$ 
Eventually, we call:
$$\hat\delta = \hat\delta(\delta_1), \qquad \hat\epsilon = \hat\epsilon(\delta_1).$$

To show that the condition of game-regularity at $x_0$ is
satisfied, we will concatenate the strategies
$\{\sigma_{0,II,k}\}_{k=1}^m$  by switching to $\sigma_{0,II,k+1}$
immediately after the token exits $B(x_0,{\delta_k})\subset B(x_0,{\hat\delta(\delta_{k+1})})$.
This construction is carried out in the next step.

\medskip 

{\bf 2.} Fix $y_0\in B(x_0, {\hat\delta})$ and let $\epsilon\in (0,
\hat\epsilon)$. Define the strategy $\sigma_{0,II}$:
$$\sigma_{0,II}^n = \sigma_{0,II}^n\big(x_0, (x_1, w_1, s_1,
t_1),\ldots , (x_n, w_n, s_n, t_n)\big) \quad \mbox{ for all } \; n\geq 0,$$
separately in the following two cases.

\smallskip

\underline{Case 1.} If $x_k\in B(x_0, {\delta_1})$ for all $k\leq n$, then we set: 
\begin{equation*}
\sigma_{0,II}^n = \sigma_{0,II,1}^n\big(x_0, (x_1, w_1, s_1, t_1),\ldots , (x_n, w_n, s_n, t_n)\big).
\end{equation*}

\smallskip

\underline{Case 2.} Otherwise, define:
\begin{equation*}
\begin{split} 
k &  \doteq k(x_0, x_1,\ldots, x_n) = \max\Big\{ 1\leq k\leq m-1;~ \exists
\;0\leq i\leq n~~ q_i\not\in B_{\delta_k}(q_0)\Big\}\\
i &  \doteq \min\Big\{ 0\leq i\leq n; ~ q_i\not\in B(x_0, {\delta_k})\Big\}.
\end{split}
\end{equation*}
and set:
\begin{equation*}
\sigma_{0,II}^n = \sigma_{0,II,k+1}^{n-i}\big(x_i, (x_{i+1}, w_{i+1}, s_{i+1},
t_{i+1}),\ldots, (x_n, w_n, s_n, t_n)\big).
\end{equation*}
It is not hard to check that each $\sigma_{0,II}^n:H_n\to
B(0,{1})\subset\R^N$ is Borel measurable, as required. 
Let $\sigma_I$ be now any opposing strategy. In the auxiliary Lemma
\ref{Fubini} below we will show that:
\begin{equation}\label{indurings}
\mathbb{P}\big(\exists n< \tau \quad  X_n\not\in B(q_0, {\delta_k})\big)
\leq\theta_0 \mathbb{P}\big(\exists n< \tau \quad X_n\not\in B(q_0,{\delta_{k-1}})\big)
\quad \mbox{for all } ~~k=2\ldots m,
\end{equation}
Consequently:
\begin{equation*}
\begin{split}
\mathbb{P}\big(\exists n< \tau \quad X_n\not\in B(q_0, {\delta})\big)
\leq\theta_0^{m-1}  \mathbb{P}\big(\exists n\leq \tau \quad x_n\not\in
B(q_0,{\delta_{1}})\big)\leq \theta_0^m,
\end{split}
\end{equation*}
which yields the result by (\ref{power}) and completes the proof.
\end{proof}

\bigskip

The inductive bound (\ref{indurings}) is quite
straightforward; we produce a precise argument for the sake of the
reader less familiar with probabilistic arguments:

\begin{Lemma}\label{Fubini}
In the context of the proof of Theorem \ref{th_concat}, we have (\ref{indurings}).
\end{Lemma}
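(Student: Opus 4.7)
The plan is to condition on the first time the token exits the smaller ball $B(x_0,\delta_{k-1})$, and then invoke the defining property of the strategy $\sigma_{0,II,k}$ on the remainder of the trajectory. Set
\[
A_j=\big\{\omega\in\Omega;\ \exists\, n<\tau\ \text{with}\ X_n(\omega)\notin B(x_0,\delta_j)\big\},\qquad j=1,\dots,m,
\]
and for $\omega\in A_{k-1}$ let $I(\omega)$ be the smallest $n<\tau(\omega)$ with $X_n(\omega)\notin B(x_0,\delta_{k-1})$. Since $\delta_{k-1}<\delta_k$ we have $A_k\subset A_{k-1}$, so it suffices to estimate $\mathbb{P}(A_k\mid I=i)\le\theta_0$ for each $i\ge 0$ on a set of positive measure, and then sum.

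The first key point is a deterministic bound on the jump size: by the form of $X_n$ in (\ref{processMp}) and the fact that the operator $v\mapsto v+(a_\p-1)\langle v,\sigma\rangle\sigma$ has operator norm $\max(1,a_\p)$ on $B(0,1)$, each single step displaces the token by at most $(1+\gamma_\p\max(1,a_\p))\epsilon<(1+\gamma_\p)\hat\epsilon(\delta_k)$, using $a_\p$ in the relevant range. Consequently, on $\{I=i\}$ we have
\[
|X_i-x_0|\le \delta_{k-1}+(1+\gamma_\p)\hat\epsilon(\delta_k)=\hat\delta(\delta_k),
\]
so $X_i\in B(x_0,\hat\delta(\delta_k))$. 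This puts the post-$I$ trajectory in the regime where the strategy $\sigma_{0,II,k}$ is guaranteed to work.

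Next, I would unfold the concatenation: by the very definition of $\sigma_{0,II}$ in Case~2, on $\{I=i\}$ the strategies used at times $n\ge i$ are precisely
\[
\sigma_{0,II}^{n}\bigl(\,\cdot\,\bigr)=\sigma_{0,II,k}^{\,n-i}\bigl(x_i,(x_{i+1},w_{i+1},s_{i+1},t_{i+1}),\dots\bigr),
\]
i.e.\ the time-shifted strategy $\sigma_{0,II,k}$ started from $X_i$. Likewise, any opposing $\sigma_I$ restricts, after time $i$, to a well-defined (Borel) strategy $\widetilde\sigma_I$ in the shifted game from $X_i$. Using that $(\Omega,\mathcal{F},\mathbb{P})$ is a countable product, I would apply Fubini to integrate out the coordinates $(w_j,s_j,t_j)$ for $j>i$: conditional on $\mathcal{F}_i$ and on $\{I=i\}\subset\{I\le i\}\in\mathcal{F}_i$, the probability that the shifted process ever leaves $B(x_0,\delta_k)$ before its own stopping time is bounded by $\theta_0$ thanks to the defining property of $\sigma_{0,II,k}$ at the starting point $X_i\in B(x_0,\hat\delta(\delta_k))$. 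Summing
\[
\mathbb{P}(A_k)=\sum_{i\ge 0}\mathbb{P}\bigl(A_k\cap\{I=i\}\bigr)\le\theta_0\sum_{i\ge 0}\mathbb{P}\bigl(\{I=i\}\bigr)=\theta_0\,\mathbb{P}(A_{k-1})
\]
yields (\ref{indurings}).

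The main obstacle is the bookkeeping in the last step: making precise the reduction of the post-$I$ trajectory to an independent instance of the game governed by $\sigma_{0,II,k}$ against a legitimate opponent, i.e.\ verifying that the map $\omega\mapsto\widetilde\sigma_I(\omega)$ is Borel in the shifted coordinates and that the random variable $X_i$ is $\mathcal{F}_i$-measurable so the strong Markov-type factorisation applies. Once these measurability checks are in place, the estimate is just Fubini over the product structure of $\Omega$, and the conclusion follows directly.
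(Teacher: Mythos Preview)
Your plan is correct and follows essentially the same route as the paper: condition on the first exit from $B(x_0,\delta_{k-1})$, use the product structure of $\Omega$ to decouple past and future, and invoke the defining property of $\sigma_{0,II,k}$ on the shifted game started at $X_I\in B(x_0,\hat\delta(\delta_k))$. The paper packages the ``bookkeeping'' you flag as an obstacle by introducing the random variables $Y_1=\{(w_n,s_n,t_n)\}_{n\le\tau_k}$ and $Y_2=\{(w_n,s_n,t_n)\}_{n>\tau_k}$ on the conditioned space $\tilde\Omega=A_{k-1}$ and verifying directly that they are independent; your countable decomposition over $\{I=i\}$ is the same argument in slightly different clothing.
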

\begin{proof}
{\bf 1.} Denote:
$$\tilde\Omega = \big\{\exists n\leq\tau ~~~ X_n\not\in
B(y_0, {\delta_{k-1}})\big\}\subset\Omega.$$
Since: $\mathbb{P}\big(\exists n\leq \tau ~~~ X_n\not\in B(x_0,{\delta_k})\big) 
\leq \mathbb{P}\big(\exists n\leq \tau ~~~ X_n\not\in B(x_0,
{\delta_{k-1}})\big)$, it follows that if
$\mathbb{P}(\tilde\Omega) = 0$ then (\ref{indurings}) holds trivially. 
For $\mathbb{P}(\tilde\Omega) > 0$, we define the probability space
$(\tilde\Omega,\tilde{\mathcal{F}}, \tilde{\mathbb{P}})$ by: 
$$\tilde{\mathcal{F}} = \big\{A\cap\tilde\Omega; ~~ A\in\mathcal{F}\big\} \quad \mbox{
  and } \quad \tilde{\mathbb{P}}(A) = \frac{\mathbb{P}(A)}{\mathbb{P}(\tilde\Omega)} \;\;\; \mbox{ for
  all } \; A\in\tilde{\mathcal{F}}.$$ 
Define also the measurable space $(\Omega_{fin}, \mathcal{F}_{fin})$, by setting
$\Omega_{fin} = \bigcup_{n=1}^\infty\Omega_n$ and by taking $\mathcal{F}_{fin}$ to
be the smallest $\sigma$-algebra containing $\bigcup_{n=1}^\infty\mathcal{F}_n$.
Finally, consider the random variables: 
\begin{equation*}
\begin{split}
& Y_1:\tilde\Omega\to\Omega_{fin}  \qquad \quad Y_1\big(\{(w_n, s_n,
t_n)\}_{n=1}^\infty\big)  = \{(w_n, s_n, t_n)\}_{n=1}^{\tau_k}\\
& Y_2:\tilde\Omega\to \Omega \qquad \qquad Y_2\big(\{(w_n, s_n,
t_n)\}_{n=1}^\infty\big)  = \{(w_n, s_n, t_n)\}_{n= \tau_k+1}^\infty, 
\end{split}
\end{equation*}
where $\tau_k$ is the following stopping time on $\tilde\Omega$:
$$\tau_k = \min\big\{n\geq 1; \quad X_n\not\in B(x_0, {\delta_{k-1}})\big\}.$$

We claim that $Y_1$ and $Y_2$ are independent. Indeed, given
$n,m\in\mathbb{N}$ and $A_1\in {\mathcal{F}}_{n}, A_2\in{\mathcal{F}}_m$:
\begin{equation*}
\begin{split}
& {\mathbb{P}}\big(Y_1\in A_1\big) = {\mathbb{P}}_n\Big( A_1\cap
\{\tau_k =n\}\cap\bigcap_{i<n} \big\{t_i\leq d_\epsilon(x_{i-1})\big\}\Big),\\
&{\mathbb{P}}\big(Y_2\in A_2\big) = {\mathbb{P}}(\tilde\Omega)\cdot {\mathbb{P}}_m(A_2)\\
&{\mathbb{P}}\big(\{Y_1\in A_1\}\cap \{Y_2\in A_2\}\big) = {\mathbb{P}}_n\Big( A_1\cap
\{\tau_k =n\}\cap\bigcap_{i<n} \big\{t_i\leq d_\epsilon(x_{i-1})\big\}\Big)\cdot {\mathbb{P}}_m(A_2).
\end{split}
\end{equation*}
This implies the following property equivalent to the claimed independence:
\begin{equation*}
 {\mathbb{P}}(\tilde\Omega)\cdot  {\mathbb{P}}\big(\{Y_1\in A_1\}\cap \{Y_2\in A_2\}\big)
  = {\mathbb{P}}\big(Y_1\in A_1\big) \cdot {\mathbb{P}}\big(Y_2\in A_2\big).
\end{equation*}
Consequently, Fubini's theorem yields for every random variable
$Z:\Omega_{fin}\times\Omega\to\bar{\mathbb{R}}_+$, that is measurable
with respect to the product $\sigma$-algebra of $\mathcal{F}_{fin}$ and $\mathcal{F}$:
\begin{equation}\label{fubi}
\int_{\tilde \Omega} Z\big(Y_1(\omega), Y_2(\omega)\big)\dd
\tilde{\mathbb{P}}(\omega) = \int_{\tilde \Omega} \int_{\tilde \Omega}
Z\big(Y_1(\omega_1), Y_2(\omega_2)\big)\dd
\tilde{\mathbb{P}}(\omega_2) \dd \tilde{\mathbb{P}}(\omega_1).
\end{equation}

\medskip

{\bf 2.} We now apply (\ref{fubi}) to the indicator random variable:
\begin{equation*}\label{Ffun}
Z\big(\{(w_i, s_i, t_i)\}_{i=1}^n, \{(w_i, s_i,
t_i)\}_{i=n+1}^\infty\big) = \mathbbm{1}_{\big\{ \mbox{\small ${\exists n\leq \tau \quad X_n(\{(w_i,
  s_i, t_i)\}_{i=1}^\infty)\not\in B(x_0,{\delta_k})}$}\big\}}, 
\end{equation*} 
to the effect that:
\begin{equation}\label{power2}
{\mathbb{P}}\big(\exists n\leq\tau \quad X_n\not\in B(x_0,{\delta_k})\big) =
\int_{\tilde\Omega} f(\omega_1)~\mbox{d}\tilde{\mathbb{P}}(\omega_1),
\end{equation}
where for a given $\omega_1 = \{(w_n, s_n,
t_n)\}_{n=1}^\infty\in\tilde\Omega$, the integrand function $f$ returns: 
\begin{equation*}
\begin{split}
f(\omega_1) & = {\mathbb{P}}\Big(\{(\bar w_n, \bar s_n,
\bar t_n)\}_{n=1}^\infty\in\tilde\Omega; \quad \exists
n\leq\tau \quad X_n\big(\{(w_i,  s_i, t_i)\}_{i=1}^{\tau_k}, \{(\bar
w_i,  \bar s_i, \bar t_i)\}_{i=\tau_k+1}^\infty\big) 
\not\in B(x_0,{\delta_k})\Big) \\ & = {\mathbb{P}}\Big(\{(\bar w_n, \bar s_n,
\bar t_n)\}_{n=1}^\infty\in\tilde\Omega; \quad \exists
n\leq\tau \quad X_n^{x_{\tau_k}, \sigma_I, \sigma_{0, II, k}}\big(\{(\bar w_i, \bar s_i, \bar
t_i)\}_{i=\tau_k+1}^\infty\big)  \not\in B(x_0, {\delta_k})\Big).
\end{split}
\end{equation*}
Since $x_{\tau_k}\in B(x_0,{\hat\delta(\delta_k)})$, by (\ref{name33}) it follows that:
\begin{equation*}
f(\omega_1) = {\mathbb{P}}\Big(\exists n\leq\tau \quad X_n^{x_{\tau_k}, \sigma_I, \sigma_{0, II, k}} \not\in
B(x_0, {\delta_k})\Big)\cdot {\mathbb{P}}(\tilde\Omega)\leq \theta_0\cdot
{\mathbb{P}}(\tilde\Omega) 
\end{equation*}
for $\tilde{\mathbb{P}}$-a.e. $ \omega_1\in \tilde\Omega$.
In conclusion, (\ref{power2}) implies (\ref{indurings}) and completes the proof.
\end{proof}

\medskip

The proof of game-regularity in Theorem \ref{corkthengamereg} will be based on
the concatenating strategies technique in the proof of Theorem \ref{th_concat} and
the analysis of the annulus walk below. Namely, one needs to derive 
an estimate on the probability of exiting a given 
annular domain $\tilde{\mathcal{D}}$ through the external portion of its
boundary. It follows \cite{PS} that when the ratio of the annulus thickness and the distance of the
initial token position from the internal boundary is large
enough, then this probability may be bounded by a universal constant $\theta_0<1$.
When $\p\geq N$, then $\theta_0$ converges to $0$ as the indicated
ratio goes to $\infty$.

\begin{Teo}\label{annulus}
For given radii $0<R_1<R_2<R_3$, consider the annulus
$\tilde{\mathcal{D}} = B(0,{ R_3})\setminus \bar B(0,{R_1})\subset
\R^N$. For every $\xi>0$, there exists $\hat\epsilon\in (0,1)$ depending on $R_1, R_2,
R_3$ and $\xi, \p, N$, such that for every $x_0\in \tilde{\mathcal{D}} \cap B(0,{R_2})$ and every
$\epsilon\in (0,\hat\epsilon)$, there exists a strategy $\tilde \sigma_{0,II}$
with the property that for every strategy $\tilde\sigma_I$ there holds:
\begin{equation}\label{name}
\mathbb{P}\Big(\tilde X_{\tilde\tau-1}\not\in \bar B(0,{R_3 -
  \epsilon})\Big)\leq \frac{v(R_2) - v(R_1)}{v(R_3) - v(R_1)} + \xi.
\end{equation}
Here, $v:(0,\infty)\to\mathbb{R}$ is given by:
\begin{equation}\label{vdef}
v(t) = \left\{\begin{array}{ll} \displaystyle{\mathrm{sgn}(\p-N) \,t^{\frac{\p-N}{\p-1}}} & \mbox{ for }
    \p\neq N\\ \log t & \mbox{ for } \p=N,\end{array}
\right.
\end{equation}
and $\{\tilde X_n = \tilde X_n^{\epsilon, x_0, \tilde\sigma_I,
  \tilde\sigma_{0,II}}\}_{n=0}^\infty$ and $\tilde\tau =
\tilde\tau^{\epsilon, x_0, \tilde\sigma_I,
  \tilde\sigma_{0,II}}$ denote, as before, the random variables corresponding
to positions and stopping time in the random Tug of War game on $\tilde{\mathcal{D}}$.
\end{Teo}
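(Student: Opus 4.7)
The plan is to reduce the annulus walk to the $\p$-harmonic Dirichlet problem already handled by Theorem \ref{conv_nondegene}, using the explicit radial $\p$-harmonic profile. Define
$$u(x) = \frac{v(|x|) - v(R_1)}{v(R_3) - v(R_1)} \qquad \mbox{for } x\in\R^N\setminus\{0\},$$
with $v$ as in (\ref{vdef}). A direct calculation shows that $v$ is (up to affine reparametrization) the unique increasing radial profile solving $\big(r^{N-1}|w'|^{\p-2}w'\big)'=0$, so $\Delta_\p u\equiv 0$ and $\nabla u(x)=\frac{v'(|x|)}{v(R_3)-v(R_1)}\frac{x}{|x|}\neq 0$ everywhere on $\R^N\setminus\{0\}$. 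Since $v$ is increasing, $u$ is radially increasing with $u(x)\leq\frac{v(R_2)-v(R_1)}{v(R_3)-v(R_1)}$ whenever $R_1\leq|x|\leq R_2$.

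Fix $\delta>0$ small enough that $\tilde{\mathcal{D}}'=B(0,R_3+\delta)\setminus\bar B(0,R_1-\delta)$ satisfies $R_1-\delta>0$ and compactly contains $\bar{\tilde{\mathcal{D}}}$, and extend $u|_{\tilde{\mathcal{D}}'}$ arbitrarily to a bounded function $F\in\mathcal{C}^2(\R^N)$. On the open set $U=\tilde{\mathcal{D}}'$ the function $F$ is $\p$-harmonic with non-vanishing gradient, so Theorem \ref{conv_nondegene} applied to $(F,U,\tilde{\mathcal{D}})$ yields
$$\|u_\epsilon - F\|_{\mathcal{C}(\bar{\tilde{\mathcal{D}}})}\leq C\epsilon,$$
with a constant $C=C(R_1,R_3,\p,N)$, where $u_\epsilon$ is the unique solution of (\ref{DPP2}) on $\tilde{\mathcal{D}}$ with data $F$. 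By Theorem \ref{are_equal}, $u_\epsilon=u_{II}^\epsilon$, so there exists a strategy $\tilde\sigma_{0,II}$ such that for every opposing $\tilde\sigma_I$,
$$\mathbb{E}\Big[F\circ\big(\tilde X^{\epsilon,x_0,\tilde\sigma_I,\tilde\sigma_{0,II}}\big)_{\tilde\tau-1}\Big]\leq u_\epsilon(x_0)+\tfrac{\xi}{4}\leq u(x_0)+C\epsilon+\tfrac{\xi}{4}.$$

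To convert this into the probability bound (\ref{name}), observe that every game shift has magnitude at most $\epsilon(1+\gamma_\p)$, so for $\hat\epsilon<\delta/(1+\gamma_\p)$ the stopping position $\tilde X_{\tilde\tau-1}$ lies in $\tilde{\mathcal{D}}'$, where $F=u$. Denoting $A=\{\tilde X_{\tilde\tau-1}\notin\bar B(0,R_3-\epsilon)\}$, radial monotonicity of $u$ gives
$$F(\tilde X_{\tilde\tau-1})\geq\frac{v(R_3-\epsilon)-v(R_1)}{v(R_3)-v(R_1)} \;\;\mbox{on}\; A, \qquad F(\tilde X_{\tilde\tau-1})\geq\frac{v(R_1-(1+\gamma_\p)\epsilon)-v(R_1)}{v(R_3)-v(R_1)} \;\;\mbox{on}\; A^c,$$
and combining these with the expectation bound and the estimate $u(x_0)\leq\frac{v(R_2)-v(R_1)}{v(R_3)-v(R_1)}$, a short arithmetic manipulation shows $\mathbb{P}(A)\leq\frac{v(R_2)-v(R_1)}{v(R_3)-v(R_1)}+\xi$ once $\hat\epsilon$ is chosen sufficiently small in terms of $\xi,R_1,R_2,R_3,\p,N$, using continuity of $v$ at $R_1$ and $R_3$. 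The chief technical ingredient---the convergence $u_\epsilon\to F$ at rate $O(\epsilon)$ despite the $\nabla^2F$-dependence of the remainder in (\ref{expansion})---is precisely the content of Theorem \ref{conv_nondegene}, so no separate error-accumulation argument over the stopping time is needed here; the ``hard part'' is thus absorbed into the earlier analytic convergence theorem rather than being replayed probabilistically.
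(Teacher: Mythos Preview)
Your proof is correct and follows essentially the same route as the paper's: both exploit the explicit radial $\p$-harmonic function $v(|\cdot|)$, invoke Theorem~\ref{conv_nondegene} on a slightly enlarged annulus to obtain $\|u_\epsilon - u\|_{\mathcal{C}(\tilde{\mathcal D})}\leq C\epsilon$, extract a near-optimal strategy from the game-value identity $u_\epsilon = u_{II}^\epsilon$, and then bound the exit probability by splitting $\mathbb{E}[u\circ\tilde X_{\tilde\tau-1}]$ according to which boundary component the token stops near, using monotonicity of $v$ and continuity to absorb the $O(\epsilon)$ errors into $\xi$. The only cosmetic differences are your affine normalization of $u$ and your explicit invocation of Theorem~\ref{are_equal}, which the paper uses implicitly.
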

\begin{proof}
Consider the radial function $u:\R^N\setminus\{0\}\to\mathbb{R}$ given by
$u(x) = v(|x|)$, where $v$ is as in (\ref{vdef}). Recall that:
\begin{equation}\label{vdef2}
\Delta_{\p} u = 0 \quad \mbox{ and } \quad \nabla u\neq
0 \qquad \mbox{ in } \,\,\R^N\setminus\{0\}. 
\end{equation}
Let $\tilde u_\epsilon$ be the family of solutions to (\ref{DPP2}) with the  data $F$ 
provided by a smooth and bounded modification of $u$ outside of the
annulus $B(,{2R_3})\setminus \bar{B}(0,\frac{R_1}{2})$.
By Theorem \ref{conv_nondegene}, there exists a constant $C>0$, depending
only on $\p, u$ and $\tilde{\mathcal{D}}$, such that:
$$\|\tilde u_\epsilon - u\|_{\mathcal{C}(\tilde{\mathcal{D}})}\leq C\epsilon
\qquad \mbox{as }\;\epsilon\to 0.$$

For a given $x_0\in \tilde{\mathcal{D}}\cap B_{R_2}(0)$, 
there exists thus a strategy $\tilde\sigma_{0,II}$ so that for every $\tilde\sigma_{I}$ we have: 
\begin{equation}\label{vdef3} 
\mathbb{E}\big[ u\circ (\tilde X^{\epsilon, x_0,
  \tilde\sigma_I, \tilde\sigma_{0, II}})_{\tilde \tau-1}\big] - u(x_0) \leq 2C\epsilon.
\end{equation}
We now estimate:
\begin{equation*}
\begin{split}
\mathbb{E} \big[u\circ  \tilde  X_{\tilde \tau-1}\big] - u(x_0) & = 
\int_{\{\tilde X_{\tilde\tau-1}\not\in \bar B(0, R_3 - \epsilon)\}} u(\tilde
X_{\tilde \tau-1})~\mbox{d}\mathbb{P} + 
\int_{\{\tilde X_{\tilde\tau-1}\in B(0, {R_1 + \epsilon})\}} u(\tilde
X_{\tilde \tau-1})~\mbox{d}\mathbb{P} - u(x_0) \\ & \geq \mathbb{P}\big(\tilde
X_{\tilde\tau-1}\not\in \bar B(0,{R_3-\epsilon})\big)
v\big(R_3- \epsilon\big) \\ & \qquad + \Big(1- \mathbb{P}\big(\tilde
X_{\tilde\tau-1}\not\in \bar B(0,{R_3 - \epsilon})\big)\Big)
v\big(R_1 - \gamma_\p\epsilon\big) - v(R_2),
\end{split}
\end{equation*}
where we used the fact that $v$ in (\ref{vdef}) is an increasing
function. Recalling (\ref{vdef3}), this implies:
\begin{equation}\label{vdef4}
\mathbb{P}\big(\tilde X_{\tilde\tau-1}\not\in \bar B(0,{R_3 - \epsilon})\big)
\leq \frac{ v(R_2) -
  v(R_1 - \gamma_\p\epsilon) + 2C\epsilon}{v(R_3 - \epsilon) -
  v(R_1 - \gamma_\p\epsilon)}.
\end{equation}
The proof of (\ref{name}) is now complete, by continuity of the right hand side
with respect to $\epsilon$.
\end{proof}

\medskip

By inspecting the quotient in the right hand side of (\ref{name}) we obtain:

\begin{Cor}\label{annulus3}
The function $v$ in (\ref{vdef}) satisfies, for any fixed $0<R_1<R_2$:
\begin{itemize}
\item[(a)] $~~~\displaystyle{\lim_{R_3\to \infty}\,\frac{v(R_2) - v(R_1)}{v(R_3) -
  v(R_1)} = \left\{\begin{array}{ll}
\displaystyle{1- \Big(\frac{R_2}{R_1}\Big)^{\frac{\p-N}{\p-1}}} & \mbox{for } 1<\p<N\vspace{1mm}\\
0 & \mbox{for } \p\geq N, \end{array}\right.\displaystyle}$
\item[(b)] $~~~\displaystyle{\lim_{M\to \infty}\,\frac{v(MR_1) - v(R_1)}{v(M^2R_1) -
  v(R_1)} = \left\{\begin{array}{ll} \displaystyle{\frac{1}{2}} & \mbox{for } \p=N\vspace{1mm}\\
 \,0 & \mbox{for } \p>N. \end{array}\right.\displaystyle}$
\end{itemize}
Consequently, the estimate (\ref{name}) can be replaced by:
\begin{equation}\label{name2}
\mathbb{P}\big(\tilde X_{\tilde\tau-1}\not\in \bar B(0,{R_3 -
  \epsilon})\big)\leq \theta_0
\end{equation}
valid for any $\theta_0> 1- \big(\frac{R_2}{R_1}\big)^{\frac{\p-N}{\p-1}}$
if $\p\in (1, N)$, and any $\theta_0>0$ if $\p\geq N$, upon choosing $R_3$
sufficiently large with respect to $R_1$ and $R_2$. Alternatively,
when $\p>N$, the same bound with arbitrarily small $\theta_0$ can be achieved by
setting $R_2=MR_1$, $R_3=M^2R_1$, with $M$ large enough.
\end{Cor}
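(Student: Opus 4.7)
The proof is essentially a direct computation splitting into the three regimes $1<\p<N$, $\p=N$, $\p>N$ that govern the sign of the exponent in the definition of $v$. The overall plan is to substitute the explicit formula \eqref{vdef} into each quotient, factor out a power of $R_1$ (or of $\log$), take the elementary limit, and then combine the result with Theorem \ref{annulus} to upgrade the bound \eqref{name} to the cleaner form \eqref{name2}. No new analytic ingredient beyond Theorem \ref{annulus} is needed; the whole statement is algebraic.

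For part (a), I would treat the three ranges of $\p$ separately. When $1<\p<N$, set $\alpha=\frac{\p-N}{\p-1}<0$, so that $v(t)=-t^{\alpha}$ and $v(R_3)\to 0^-$ as $R_3\to\infty$; dividing numerator and denominator through gives the limit $1-(R_2/R_1)^\alpha$. When $\p=N$, $v(t)=\log t$ and $v(R_3)\to\infty$, making the ratio vanish. When $\p>N$, $\alpha>0$ so $v(t)=t^\alpha\to\infty$ as $R_3\to\infty$, again sending the quotient to $0$. These are textbook limits, so the only thing to be careful about is the sign convention $\mathrm{sgn}(\p-N)$, which is precisely what makes $v$ monotone increasing in all three cases (used implicitly in the derivation of \eqref{name}).

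For part (b), substitute $R_2=MR_1$ and $R_3=M^2R_1$. In the case $\p=N$, both numerator and denominator telescope as $\log M$ and $2\log M$, yielding $1/2$. In the case $\p>N$, one factors
\[
\frac{(MR_1)^\alpha-R_1^\alpha}{(M^2R_1)^\alpha-R_1^\alpha}=\frac{M^\alpha-1}{M^{2\alpha}-1}=\frac{1}{M^\alpha+1},
\]
which tends to $0$ since $\alpha>0$. Again nothing subtle is involved; the key observation is that normalising by $R_1^\alpha$ removes the $R_1$-dependence entirely, so the limit is universal.

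Finally, for the consequence statement, I would argue as follows. Fix any $\theta_0$ strictly larger than the target value in (a) or (b). By part (a) one may choose $R_3$ so large (relative to the fixed $R_1,R_2$) that the asymptotic quantity $\frac{v(R_2)-v(R_1)}{v(R_3)-v(R_1)}$ is below $\theta_0-\xi$ for some $\xi>0$; by part (b) the analogous choice $R_2=MR_1$, $R_3=M^2R_1$ with $M$ large enough handles the $\p\geq N$ case with arbitrarily small $\theta_0$. Applying Theorem \ref{annulus} with this $\xi$ produces $\hat\epsilon$ such that \eqref{name} yields \eqref{name2} for every $\epsilon\in(0,\hat\epsilon)$. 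The only mild point is to check that the selection of $R_3$ (or $M$) depends only on $R_1,R_2,\p,N$ and on $\theta_0$, not on $\epsilon$, so that $\hat\epsilon$ inherits the dependence stated in Theorem \ref{annulus}; this is immediate from the proof above since the limits in (a) and (b) are taken before invoking Theorem \ref{annulus}.
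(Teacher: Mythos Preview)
Your proposal is correct and matches the paper's approach, which gives no proof beyond the phrase ``by inspecting the quotient in the right hand side of \eqref{name}''; your explicit case-by-case computation is exactly this inspection written out in full. One small slip: in the consequence paragraph you attribute the ``arbitrarily small $\theta_0$'' for $\p\geq N$ to part (b), but for $\p=N$ the limit in (b) is $\tfrac{1}{2}$, not $0$---the $\p=N$ case with arbitrarily small $\theta_0$ comes from part (a) (taking $R_3\to\infty$), and part (b) supplies the alternative route only for $\p>N$, as the statement indicates.
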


\medskip

The results of Theorem \ref{annulus} and Corollary \ref{annulus3} are
invariant under scaling, i.e.:

\begin{Rem}\label{annulus2}
The bounds (\ref{name}) and (\ref{name2}) remain true if we replace
$R_1, R_2$, $R_3$ by $r R_1, r R_2$, $r R_3$, the domain
$\tilde{\mathcal{D}}$ by $r\tilde{\mathcal{D}}$ and $\hat\epsilon$
by $r\hat\epsilon$, for any $r>0$.
\end{Rem}

\medskip

\begin{figure}[htbp]
\centering
\includegraphics[scale=0.4]{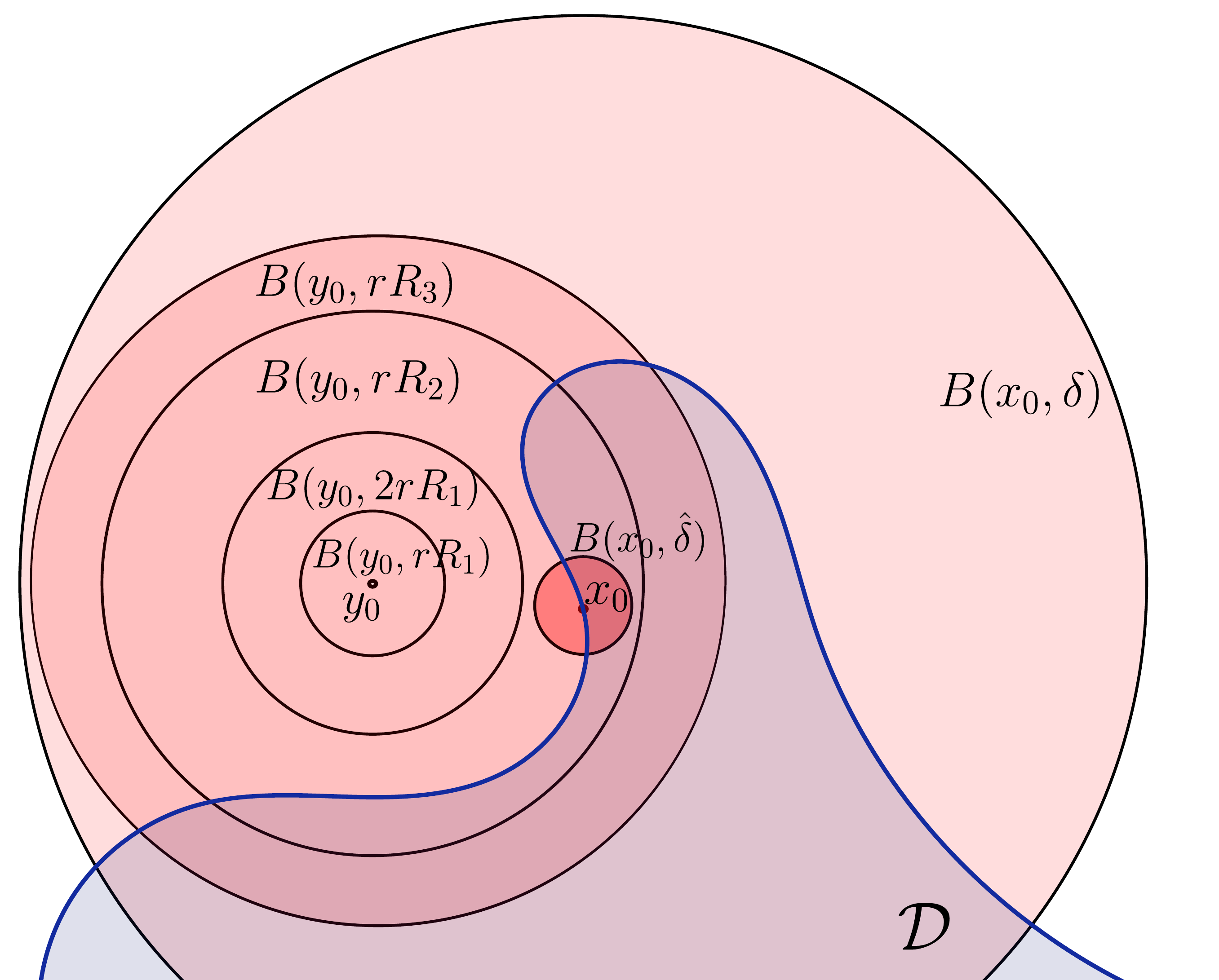}
    \caption{{Positions of the concentric balls $B(y_0, \cdot)$ and
        $B(x_0, \cdot)$ in the proof of Theorem \ref{corkthengamereg}.}}
\label{f:cork_proof}
\end{figure}

\bigskip

\noindent {\bf Proof of Theorem \ref{corkthengamereg}.}

\smallskip

\noindent With the help of Theorem \ref{annulus}, we will show that the
assumption of Theorem \ref{th_concat} is satisfied,
with proba\-bi\-li\-ty $\theta_0<1$ 
depending only on $\p, N$ and $\mu\in (0,1)$ in Definition \ref{cork_def}. Namely,
set $R_1=1$, $R_2= \frac{2}{\mu}$ and $R_3>R_2$ according to Corollary
\ref{annulus3} (a) in order to have $\theta_0= \theta_0(\p, N, R_1,
R_2)<1$. Further, set $r=\frac{\delta}{2R_3}$ so that
$rR_2=\frac{\delta}{\mu R_3}$. Using the corkscrew condition, we obtain:
$$B(y_0,{2rR_1}) \subset B(x_0, \frac{\delta}{\mu R_3})\setminus \bar{\mathcal{D}},$$
for some $y_0\in\R^N$.
In particular: $|x_0-y_0|<rR_2$, so $x_0\in B(y_0, {rR_2})\setminus
\bar B(y_0, {2rR_1})$. It now easily follows that there exists
$\hat\delta\in (0,\delta)$ with the property that:
$$B(x_0, {\hat\delta}) \subset B(y_0, {rR_2})\setminus \bar B(y_0, {2rR_1}).$$
Finally, we observe that $B(y_0, {rR_3})\subset B(x_0, \delta)$ because
$rR_3 + |x_0-y_0| < rR_3 + rR_2<2rR_3 = \delta$.

\medskip

Let $\hat\epsilon/r>0$ be as in Theorem \ref{annulus}, applied to the
annuli with radii $R_1, R_2, R_3$, in view of Remark \ref{annulus2}. For a given
$x\in B(x_0, {\hat\delta})$ and $\epsilon\in (0,\hat\epsilon)$, let
$\tilde\sigma_{0,II}$ be the strategy ensuring validity of the bound
(\ref{name2}) in the annulus walk on $y_0 + \tilde{\mathcal{D}}.$ 
For a given strategy $\sigma_I$ there holds:
\begin{equation*}
\begin{split}
\Big\{\omega\in\Omega; ~  \exists n<\tau^{\epsilon, x, \sigma_I,
  \sigma_{0,II}}&(\omega) \qquad X_n^{\epsilon,x, \sigma_I,
  \sigma_{0,II}}(\omega)\not\in B(x_0, \delta)\Big\} \\ & \subset 
\Big\{\omega\in\Omega; ~ \tilde X^{\epsilon, x, \tilde \sigma_I,
  \tilde \sigma_{0,II}}_{\tilde\tau-1}(\omega)\not\in B(y_0, {r R_3-\epsilon})\Big\}.
\end{split}
\end{equation*}
The final claim follows by (\ref{name2}) and by applying  Theorem \ref{th_concat}.
\endproof

\begin{Rem}
Using Corollary \ref{annulus3} (b) one can show that every open,
bounded domain $\mathcal{D}\subset\R^N$ is game-regular for $\p>N$. 
The proof mimics the argument of \cite{PS} for the process based on
the mean value expansion (\ref{PS_dpp}), so we omit it.
\end{Rem}

\section{Uniqueness and identification of the limit in Theorem
  \ref{thm_gamereg_conv}} 

Let $F\in\mathcal{C}(\R^N)$ be a bounded data function and let
$\mathcal{D}$ be open, bounded and game-regular. In virtue of Theorem
\ref{thm_gamereg_conv} and the Ascoli-Arzela theorem, every sequence in
the family $\{u_\epsilon\}_{\epsilon\to 0}$ of solutions to
(\ref{DPP2}) has a further subsequence converging uniformly to some
$u\in\mathcal{C}(\R^N)$ and satisfying $u=F$ on $\R^N\setminus \mathcal{D}$.
We will show that such limit $u$ is in fact unique.

\medskip

Recall first the definition of the $\p$-harmonic viscosity solution:

\begin{Def}\label{visco_def_p}
We say that $u\in\mathcal{C}(\bar{\mathcal{D}})$ is a {\em viscosity solution} to the problem:
\begin{equation}\label{problem} 
\Delta_{\p}u =0 \quad \mbox{in }\mathcal{D}, \qquad u=F \quad \mbox{on }\partial\mathcal{D},
\end{equation}
if the latter boundary condition holds and if:
\begin{itemize}
\item[(i)] for every $x_0\in\mathcal{D}$ and every $\phi\in\mathcal{C}^2(\bar{\mathcal{D}})$ such that:
\begin{equation}\label{assu_super}
\phi(x_0) = u(x_0), \quad \phi<u ~~ \mbox{in }
\bar{\mathcal{D}}\setminus \{x_0\}\quad \mbox{ and } \quad  \nabla \phi(x_0)\neq 0,
\end{equation}
there holds: $\Delta_{\p} \phi(x_0)\leq 0$,
\item[(ii)] for every $x_0\in\mathcal{D}$ and every $\phi\in\mathcal{C}^2(\bar{\mathcal{D}})$ such that:
\begin{equation*}\label{assu_sub}
\phi(x_0) = u(x_0), \quad \phi>u ~~ \mbox{in }
\bar{\mathcal{D}}\setminus \{x_0\}\quad \mbox{ and } \quad  \nabla \phi(x_0)\neq 0,
\end{equation*}
there holds: $\Delta_{\p} \phi(x_0)\geq 0$.
\end{itemize}
\end{Def}

\begin{Teo}\label{unif-topharm}
Assume that the sequence $\{u_\epsilon\}_{\epsilon\in J, \epsilon\to 0}$ of solutions to
(\ref{DPP2}) with a bounded data function $F\in\mathcal{C}(\R^N)$,
converges uniformly as $\epsilon\to 0$ to some limit  
$u\in\mathcal{C}(\R^N)$. Then $u$ must be the viscosity solution to (\ref{problem}).
\end{Teo}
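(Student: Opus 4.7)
The plan is to verify the two items of Definition \ref{visco_def_p} by a standard viscosity argument, using the mean value expansion of Theorem \ref{th_expansion} as the ``consistency'' ingredient and the dynamic programming principle (\ref{DPP2}) together with the monotonicity of $S_\epsilon$ as the ``stability'' ingredient.

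First I would dispense with the boundary condition. For every $x\in\R^N\setminus\mathcal{D}$ one has $d_\epsilon(x)=0$, so (\ref{DPP2}) yields $u_\epsilon(x)=F(x)$; passing to the uniform limit along $J$ gives $u=F$ on $\R^N\setminus\mathcal{D}$, and in particular on $\partial\mathcal{D}$. It remains to check the interior viscosity conditions (i) and (ii). By symmetry I only treat (i): fix $x_0\in\mathcal{D}$ and $\phi\in\mathcal{C}^2(\bar{\mathcal{D}})$ with $\phi(x_0)=u(x_0)$, $\phi<u$ on $\bar{\mathcal{D}}\setminus\{x_0\}$ and $\nabla\phi(x_0)\neq 0$.

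Because $u_\epsilon\to u$ uniformly and $\phi$ touches $u$ strictly from below at $x_0$, a routine compactness argument produces a sequence $x_\epsilon\to x_0$ (with $\epsilon\in J$, $\epsilon\to 0$) such that $u_\epsilon-\phi$ attains a local minimum at $x_\epsilon$ on some fixed ball $\bar B(x_0,r)\subset\mathcal{D}$; set $c_\epsilon=u_\epsilon(x_\epsilon)-\phi(x_\epsilon)\to 0$ and $\tilde\phi=\phi+c_\epsilon$, so that $\tilde\phi\leq u_\epsilon$ on $\bar B(x_0,r)$ with equality at $x_\epsilon$. For $\epsilon$ small enough one has $d_\epsilon(x_\epsilon)=1$ and the ellipses appearing in the definition (\ref{average}) of $S_\epsilon$ at $x_\epsilon$ are contained in $B(x_0,r)$; the pointwise monotonicity of $f_v$, and hence of $S_\epsilon$, with respect to $v$ then yields
\begin{equation*}
\phi(x_\epsilon)+c_\epsilon = u_\epsilon(x_\epsilon) = S_\epsilon u_\epsilon(x_\epsilon) \geq S_\epsilon\tilde\phi(x_\epsilon) = S_\epsilon\phi(x_\epsilon)+c_\epsilon,
\end{equation*}
which is the key inequality $\phi(x_\epsilon)\geq S_\epsilon\phi(x_\epsilon)$.

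Next I would invoke Theorem \ref{th_expansion} at the base point $x_\epsilon$, which is legitimate since $\nabla\phi(x_\epsilon)\neq 0$ for all small $\epsilon$ by continuity and $\nabla\phi(x_0)\neq 0$:
\begin{equation*}
S_\epsilon\phi(x_\epsilon) = \phi(x_\epsilon) + \frac{\gamma_\p^2\epsilon^2}{2(N+2)}|\nabla\phi(x_\epsilon)|^{2-\p}\Delta_\p\phi(x_\epsilon) + o(\epsilon^2),
\end{equation*}
where the $o(\epsilon^2)$ term is uniform in $\epsilon$ because it depends only on $|\nabla\phi|$, $|\nabla^2\phi|$ and the modulus of continuity of $\nabla^2\phi$ near the compact set $\{x_0\}\cup\{x_\epsilon\}$. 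Combining with the previous inequality, dividing by $\epsilon^2$ and letting $\epsilon\to 0$ produces $|\nabla\phi(x_0)|^{2-\p}\Delta_\p\phi(x_0)\leq 0$, hence $\Delta_\p\phi(x_0)\leq 0$, which is (i). Condition (ii) follows by the symmetric argument with a touching-from-above test function, reversing inequalities and using the upper-touching version $\phi(x_\epsilon)\leq S_\epsilon\phi(x_\epsilon)$.

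The only delicate point I anticipate is the uniformity in $\epsilon$ of the error in the mean value expansion at the moving base points $x_\epsilon$; the quantitative form stated at the end of Theorem \ref{th_expansion} (dependence only on $|\nabla\phi(x_\epsilon)|$, $|\nabla^2\phi(x_\epsilon)|$ and the local modulus of $\nabla^2\phi$) makes this automatic once $x_\epsilon\to x_0$ and $\nabla\phi(x_0)\neq 0$, so the argument is genuinely a one-line consistency computation wrapped around the expansion.
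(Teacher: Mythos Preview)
Your argument is correct and follows essentially the same approach as the paper's proof: locate approximate minimizers $x_\epsilon\to x_0$ of $u_\epsilon-\phi$, combine the DPP identity $u_\epsilon(x_\epsilon)=S_\epsilon u_\epsilon(x_\epsilon)$ with the monotonicity of $S_\epsilon$ to get $S_\epsilon\phi(x_\epsilon)\leq\phi(x_\epsilon)$, and then read off the sign of $\Delta_\p\phi(x_0)$ from the expansion in Theorem~\ref{th_expansion}. The only cosmetic difference is that you take the minimum on a fixed local ball $\bar B(x_0,r)$ and then note that all sampling ellipses of $S_\epsilon$ at $x_\epsilon$ stay inside this ball for small $\epsilon$, whereas the paper takes the minimum globally on $\bar{\mathcal D}$ and spells out the compactness argument producing $x_\epsilon\to x_0$ in detail; both variants are standard and equivalent here.
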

\begin{proof}
{\bf 1.} Fix $x_0\in\mathcal{D}$ and let $\phi$ be a test function as in (\ref{assu_super}).
We first claim that there exists a sequence
$\{x_\epsilon\}_{\epsilon\in J}\in\mathcal{D}$, such that:
\begin{equation}\label{4.7}
\lim_{\epsilon\to 0, \epsilon\in J}x_\epsilon = x_0 \quad \mbox{ and } \quad
u_\epsilon(x_\epsilon) - \phi(x_\epsilon) = \min_{\bar{\mathcal{D}}} \,(u_\epsilon - \phi).
\end{equation}
To prove the above, for every $j\in\mathbb{N}$ define $\eta_j>0$ and $\epsilon_j>0$ such that:
$$\eta_j = \min_{\bar{\mathcal{D}}\setminus B(x_0,{\frac{1}{j}})} (u-\phi) \qquad\mbox{and}
\qquad \|u_\epsilon - u\|_{\mathcal{C}(\bar{\mathcal{D}})} \leq \frac{1}{2}\eta_j \quad
\mbox{ for all } \epsilon\leq \epsilon_j.$$
Without loss of generality, the sequence $\{\epsilon_j\}_{j=1}^\infty$ is decreasing to
$0$ as $j\to\infty$. Now, for $\epsilon\in (\epsilon_{j+1},
\epsilon_j]\cap J$, let $x_\epsilon\in \bar B(x_0, {\frac{1}{j}})$ satisfy: 
$$u_\epsilon(q_\epsilon) - \phi(q_\epsilon) = \min_{\bar{B}(x_0,{\frac{1}{j}})}(u_\epsilon - \phi).$$
Observing that the following bound is valid for every $q\in
{\bar{\mathcal{D}}\setminus B(x_0,{\frac{1}{j}})}$, proves (\ref{4.7}):
\begin{equation*}
\begin{split}
u_\epsilon(q) - \phi(q) & \geq u(q) - \phi(q) - \|u_\epsilon -
u\|_{\mathcal{C}(\bar{\mathcal{D}})}\geq \eta_j - \frac{1}{2}\eta_j \geq \|u_\epsilon
- u\|_{\mathcal{C}(\bar{\mathcal{D}})} \\ & \geq u_\epsilon(q_0)- \phi(q_0)\geq
\min_{\bar{B}(x_0,{\frac{1}{j}})}(u_\epsilon - \phi).
\end{split}
\end{equation*}

\smallskip

{\bf 2.} Since by (\ref{4.7}) we have: $\phi(x) \leq u_\epsilon(x) +
\big(\phi(x_\epsilon) - u_\epsilon(x_\epsilon)\big)$ for all $x\in\bar{\mathcal{D}}$, it follows that:
\begin{equation}\label{4.8}
S_\epsilon \phi(x_\epsilon) - \phi(x_\epsilon) \leq  S_\epsilon u(x_\epsilon)
+ \big(\phi(x_\epsilon) - u_\epsilon(x_\epsilon)\big) - \phi(x_\epsilon)  = 0,
\end{equation}
for all $\epsilon$ small enough to guarantee that $d_\epsilon(x_\epsilon) = 1$.
On the other hand, (\ref{expansion}) yields:
\begin{equation*}
S_\epsilon \phi(x_\epsilon)  - \phi(x_\epsilon) = \frac{\epsilon^2}{\p-1}|\nabla\phi(x_\epsilon)|^{2-\p}
\Delta_{\p}\phi(x_\epsilon) + o(\epsilon^2),
\end{equation*}
for $\epsilon$ small enough to get $\nabla\phi(x_\epsilon)\neq 0$.
Combining the above with (\ref{4.8}) gives:
$$\Delta_{\p}\phi(x_\epsilon) \leq o(1).$$
Passing to the limit with $\epsilon\to 0, \epsilon\in J$ establishes the desired
inequality $\Delta_{\p}\phi(x_0)\leq 0$ and proves part (i) of Definition  \ref{visco_def_p}.
The verification of part (ii) is done along the same lines.
\end{proof}

Since the viscosity solutions $u\in\mathcal{C}(\bar{\mathcal{D}})$ of (\ref{problem}) are
unique (see a direct proof in \cite[Appendix, Lemma
4.2]{LM}), in view of Theorem \ref{unif-topharm} and Theorem 
\ref{thm_gamereg_conv} we obtain:

\begin{Cor}
Let $F\in\mathcal{C}(\R^N)$ be a bounded data function and let
$\mathcal{D}$ be open, bounded and game-regular. The family
$\{u_\epsilon\}_{\epsilon\to 0}$ of solutions to 
(\ref{DPP2}) converges uniformly in $\bar{\mathcal{D}}$ to the unique
viscosity solution of (\ref{problem}).
\end{Cor}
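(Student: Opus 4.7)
The plan is to assemble the corollary directly from the preceding results, with the only genuinely new ingredient being a standard subsequence argument to upgrade subsequential convergence to convergence of the whole family. First I would note that by Theorem \ref{thm_gamereg_conv}, game-regularity of $\mathcal{D}$ implies that $\{u_\epsilon\}_{\epsilon\to 0}$ is equicontinuous on $\bar{\mathcal{D}}$; combined with the uniform bound $\|u_\epsilon\|_{\mathcal{C}(\R^N)}\leq \|F\|_{\mathcal{C}(\R^N)}$ from Theorem \ref{thD}, the Ascoli--Arzel\`a theorem yields that every sequence $\epsilon_n\to 0$ admits a subsequence along which $u_{\epsilon_n}$ converges uniformly on $\bar{\mathcal{D}}$ to some $u\in\mathcal{C}(\bar{\mathcal{D}})$.

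Next I would extend such a limit $u$ to all of $\R^N$ by setting $u=F$ on $\R^N\setminus\mathcal{D}$: since each $u_\epsilon$ equals $F$ on $\R^N\setminus\mathcal{D}$ and $u_\epsilon(y_0)\to u(x_0)=F(x_0)$ as $y_0\to x_0\in\partial\mathcal{D}$ (by equicontinuity at the boundary and continuity of $F$), the resulting extension is continuous on $\R^N$. At this point Theorem \ref{unif-topharm} applies to identify $u$ on $\bar{\mathcal{D}}$ as a viscosity solution of $\Delta_\p u=0$ in $\mathcal{D}$ with $u=F$ on $\partial\mathcal{D}$, in the sense of Definition \ref{visco_def_p}.

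The final step is uniqueness: by the cited result \cite[Appendix, Lemma 4.2]{LM}, the viscosity solution of (\ref{problem}) is unique, so every subsequential uniform limit coincides with the same function $u^\ast$. A standard argument then promotes this to convergence of the entire family: if $\{u_\epsilon\}_{\epsilon\to 0}$ did not converge uniformly to $u^\ast$, one could extract $\epsilon_n\to 0$ with $\|u_{\epsilon_n}-u^\ast\|_{\mathcal{C}(\bar{\mathcal{D}})}\geq \eta>0$, and then by equicontinuity and equiboundedness pass to a further subsequence converging uniformly to a limit which, by the above, must equal $u^\ast$, a contradiction.

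I do not anticipate a genuine obstacle: every ingredient is already in place in the preceding sections. The only point requiring mild care is verifying that the subsequential limits are globally continuous extensions matching $F$ outside $\mathcal{D}$ so that Theorem \ref{unif-topharm} applies as stated; this is handled by the boundary equicontinuity built into Theorem \ref{transfer_p} and Theorem \ref{thm_gamereg_conv}.
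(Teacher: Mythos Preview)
Your proposal is correct and matches the paper's own reasoning: the paper does not give an explicit proof of this corollary but simply notes that it follows from Theorem \ref{thm_gamereg_conv} (equicontinuity) together with Ascoli--Arzel\`a, Theorem \ref{unif-topharm} (identification of any limit as a viscosity solution), and the uniqueness result \cite[Appendix, Lemma 4.2]{LM}. You have supplied exactly these ingredients, plus the routine subsequence argument to pass from subsequential convergence to convergence of the whole family.
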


\end{document}